\documentclass[12pt]{amsart}

\usepackage{amsmath,amsthm,amssymb,amsfonts,mathtools}
\usepackage[margin=2cm]{geometry}
\usepackage{enumerate}
\usepackage{mathrsfs}
\usepackage{tikz}
\usepackage{hyperref}
\hypersetup{hidelinks}

\newcommand{\ZZ}{\mathbb Z}
\newcommand{\NN}{\mathbb N}
\newcommand{\cT}{\mathcal T}
\newcommand{\cA}{\mathcal A}
\newcommand{\cB}{\mathcal B}
\newcommand{\cJ}{\mathcal J}
\newcommand{\cN}{\mathcal N}

\newcommand{\ee}{\mathbf e}
\newcommand{\xx}{\mathbf x}
\newcommand{\uu}{\mathbf u}
\newcommand{\vv}{\mathbf v}
\newcommand{\zz}{\mathbf z}
\newcommand{\etab}{\boldsymbol\eta}
\newcommand{\rhob}{\boldsymbol\rho}
\newcommand{\betab}{\boldsymbol\beta}
\newcommand{\ideal}[1]{\mathfrak i_{#1}}
\newcommand{\Chain}[1]{\mathsf C_{#1}}
\DeclareMathOperator{\rank}{rank}
\DeclareMathOperator{\reg}{reg}

\newtheorem{Theorem}{Theorem}[section]
\newtheorem{Lemma}[Theorem]{Lemma}
\newtheorem{Corollary}[Theorem]{Corollary}
\newtheorem{Proposition}[Theorem]{Proposition}
\theoremstyle{definition}
\newtheorem{Definition}[Theorem]{Definition}
\newtheorem{Setup}[Theorem]{Setup}
\newtheorem{Remark}[Theorem]{Remark}
\newtheorem{Example}[Theorem]{Example}
\newtheorem{Discussion}[Theorem]{Discussion}

\begin{document}
	
	\title[Rank identifications in join-meet ideals]{Rank identifications in join-meet ideals of thin distributive lattices}

\author[T.~Hibi]{Takayuki Hibi}
\address[Takayuki Hibi]
{Department of Pure and Applied Mathematics, 
Graduate School of Information Science and Technology, 
The University of Osaka, 
Suita, Osaka 565-0871, Japan}
\email{hibi@math.sci.osaka-u.ac.jp}

\author[F. Navarra]{Francesco Navarra}      
\address[Francesco Navarra]{Sabanci University, Faculty of Engineering and Natural Sciences, Orta Mahalle, Tuzla 34956, Istanbul, Turkey}	
\email{francesco.navarra@sabanciuniv.edu}

\author[A. A. Qureshi]{Ayesha Asloob Qureshi}      
\address[Ayesha Asloob Qureshi]{Sabanci University, Faculty of Engineering and Natural Sciences, Orta Mahalle, Tuzla 34956, Istanbul, Turkey}	
\email{aqureshi@sabanciuniv.edu, ayesha.asloob@sabanciuniv.edu}

	\keywords{Distributive lattice, join-meet ideal, Hibi ring, prime ideal, lattice ideal, Cohen--Macaulay ring, linear resolution}
	
	\subjclass[2020]{06D05, 13C70, 13H10, 13P10}

    	\thanks{This research was supported through the program “Oberwolfach Research Fellows” (OWRF) by the Mathematisches Forschungsinstitut Oberwolfach in 2026. The second and third authors are supported by Scientific and Technological Research Council of Turkey T\"UB\.{I}TAK under the Grants 126F175 and 124F113, respectively; they are thankful to T\"UB\.{I}TAK for their support. The second author is a member of GNSAGA Indam and he acknowledges their support.} 
        
	\begin{abstract}
Let $L$ be a thin distributive lattice of rank $m+1$, and let $I_L$ be its join-meet ideal in the polynomial ring $S_L$. For $A\subseteq\{1,\dots,m\}$, let $J_A$ be the ideal obtained from $I_L$ by identifying, for every $i\in A$, the variables of the two elements of $L$ of rank $i$.  We prove that $J_A$ is prime if and only if $A$ contains no turning rank and does not contain both ranks adjacent to a turning rank. In this case, $S_L/J_A$ is the toric ring obtained from Hibi's parametrization of the Hibi ring of $L$ by identifying suitable pairs of variables, and for an arbitrary $A$ we describe the saturation of $J_A$ as a lattice ideal. 
	\end{abstract}
	
	\maketitle
	
	\section{Introduction}
	
	Let $L$ be a finite distributive lattice, let $K$ be a field and let $S_L=K[x_\alpha:\alpha\in L]$ be the polynomial ring whose variables are indexed by the elements of $L$. The \emph{join-meet ideal} $I_L$ of $L$ is the ideal of $S_L$ generated by the binomials $x_\alpha x_\beta-x_{\alpha\wedge\beta}x_{\alpha\vee\beta}$, where $\alpha,\beta\in L$ are incomparable. It was introduced by Hibi \cite{Hibi}, who proved that $K[L]=S_L/I_L$ is a normal Cohen--Macaulay domain and an algebra with straightening laws on $L$. Since then, join-meet ideals and their quotient rings, nowadays called Hibi rings, have been studied from many points of view; see, for instance, \cite{HH11,HHO} and the references therein.
	
	In this paper we focus on the thin distributive lattices, also known as generalized snake posets \cite{vonBell}. A finite distributive lattice $L$ of rank $m+1$ is thin if it is simple and has exactly two elements of rank $i$ for every $i\in[m]:=\{1,\dots,m\}$. Denoting these two elements by $c_i$ and $d_i$, we study what happens to the Hibi ring when the two elements of some ranks are identified. More precisely, for $A\subseteq[m]$ we consider the ideal
	\[
	J_A=I_L+(x_{c_i}-x_{d_i}:i\in A)\subseteq S_L,
	\]
	and we ask when $J_A$ is a prime ideal. The answer depends on the \emph{turning ranks} of $L$, namely the integers $2\leq k\leq m-1$ such that $c_{k-1}\vee d_{k-1}=c_{k+1}\wedge d_{k+1}$. Our main result, Theorem~\ref{Thm for primality}, states that $J_A$ is prime if and only if $A$ contains no turning rank and, for every turning rank $k$, the set $A$ does not contain both $k-1$ and $k+1$. Interestingly, although $J_A$ need not be prime, the quotient
$S_L/J_A$ is Cohen--Macaulay for every $A\subseteq[m]$;
see Remark~\ref{rem:homological}. Finally, when $A$ is admissible, we describe $S_L/J_A$ as the toric ring obtained from Hibi's parametrization of $K[L]$ by identifying suitable pairs of variables. This parametrization has a natural probabilistic meaning in terms of the conjunctive Bayesian networks introduced in \cite{BES}, where it is shown that, in suitable coordinates, such a model is the toric variety of the Hibi ring of $L$. In this setting, a rank identification corresponds to requiring that two events have the same conditional probability. Our results then say that the homogeneous relations among the probabilities of the resulting submodel are generated by the join-meet relations together with the obvious linear ones if and only if $A$ is admissible, and that in general they are described by the saturation of $J_A$; see Remark~\ref{rem:CBN} and Example~\ref{ex:CBN}.
    

	The paper is organized as follows. In Section~\ref{sec:basics} we fix the terminology and describe the structure of thin distributive lattices: they are obtained by gluing diamonds and, roughly speaking, they look like staircases whose steps are products of a chain and a chain with two elements. In Section~\ref{sec:necessity} we prove that the two conditions above are necessary, by exhibiting explicit zero-divisors modulo $J_A$, and we show that $J_A$ is prime if and only if it coincides with the lattice ideal of the lattice $\Lambda_A$ spanned by the exponent vectors of its generators. Section~\ref{sec:sufficiency} contains the proof of the sufficiency, which is the core of the paper: we show that every binomial of the lattice ideal of $\Lambda_A$ of degree at least three is generated by binomials of smaller degree, by induction on the rank of $L$ and by means of suitable linear functionals vanishing on $\Lambda_A$, which we call $A$-weights. Finally, Section~\ref{sec:toric} is devoted to the toric parametrization and to its probabilistic interpretation.
	
	\section{Thin distributive lattices and rank identifications}\label{sec:basics}
	
	\subsection{Posets and distributive lattices}
	
	
	A \emph{partially ordered set}, or \emph{poset}, is a set $P$ endowed with a relation $\leq$ which is reflexive, antisymmetric and transitive. Throughout the paper, all posets are assumed to be finite. Two elements $p,q\in P$ are said to be \emph{comparable} if $p\leq q$ or $q\leq p$, and \emph{incomparable} otherwise.

	
	A \emph{chain} of a finite poset $P$ is a totally ordered subset of $P$, and its \emph{length} is its cardinality minus one. The \emph{rank} of $P$, denoted by $\rank(P)$, is the maximal length of a chain of $P$. For $r\geq1$ we denote by $\Chain r$ the chain $\{0<1<\cdots<r-1\}$. If $P$ and $Q$ are posets, $P\times Q$ denotes the Cartesian product of $P$ and $Q$ ordered componentwise, that is, $(p,q)\leq(p',q')$ if and only if $p\leq p'$ in $P$ and $q\leq q'$ in $Q$. A subset $U$ of $P$ is a \emph{poset ideal} if $q\in U$ whenever $q\leq p$ for some $p\in U$, and $\cJ(P)$ denotes the set of poset ideals of $P$, ordered by inclusion.
	
	A \emph{finite lattice} is a finite poset $L$ in which any two elements $\alpha,\beta$ have a greatest lower bound $\alpha\wedge\beta$ and a least upper bound $\alpha\vee\beta$, called the \emph{meet} and the \emph{join} of $\alpha$ and $\beta$, respectively. It is \emph{distributive} if meet and join distribute over each other. We denote by $0_L$ and $1_L$ the least and the greatest element of $L$, and for $\alpha\leq\beta$ we set $[\alpha,\beta]=\{\gamma\in L:\alpha\leq\gamma\leq\beta\}$. If $\alpha<\beta$ and $[\alpha,\beta]=\{\alpha,\beta\}$, we say that $\beta$ \emph{covers} $\alpha$, and we call $\{\alpha,\beta\}$ an \emph{edge} of $L$, since it corresponds to an edge of the Hasse diagram of $L$. A \emph{diamond} is a poset isomorphic to $\Chain2\times\Chain2$, that is, a set $\{a,c,d,b\}$ with $a<c<b$, $a<d<b$ and $c,d$ incomparable. Its four edges are the \emph{lower edges} $\{a,c\}$, $\{a,d\}$ and the \emph{upper edges} $\{c,b\}$, $\{d,b\}$; two of them are \emph{adjacent} if they share an element, and \emph{opposite} otherwise. An element $\alpha\in L$ is an \emph{apex} of $L$ if it is comparable with every element of $L$, and $L$ is \emph{simple} if its only apexes are $0_L$ and $1_L$. An element $0_L\neq p\in L$ is \emph{join-irreducible} if $p=\alpha\vee\beta$ implies $p=\alpha$ or $p=\beta$. We denote by $P_L$ the subposet of $L$ consisting of its join-irreducible elements and, for $\alpha\in L$, we set
	\[
	\ideal\alpha=\{p\in P_L:p\leq\alpha\}.
	\]
	By Birkhoff's fundamental structure theorem \cite[Theorem~9.1.7]{HH11}, if $L$ is a finite distributive lattice,  the map $\alpha\mapsto\ideal\alpha$ is an isomorphism of lattices from $L$ onto $\cJ(P_L)$; in particular, $\ideal{\alpha\wedge\beta}=\ideal\alpha\cap\ideal\beta$ and $\ideal{\alpha\vee\beta}=\ideal\alpha\cup\ideal\beta$ for all $\alpha,\beta\in L$. The \emph{rank} of $\alpha\in L$, denoted by $\rank_L(\alpha)$, is the maximal length of a chain of $L$ whose greatest element is $\alpha$, and $\rank(L)=\rank_L(1_L)$. We shall freely use the following elementary facts (see, e.g., \cite[Chapter~3]{Stanley}).
	
	\begin{Remark}\label{lem:basic}\rm
		Let $L$ be a finite distributive lattice. Then the following hold.
		\begin{enumerate}[(a)]
			\item $\rank_L(\alpha)=|\ideal\alpha|$ for every $\alpha\in L$. If $\alpha<\beta$, then $\rank_L(\alpha)<\rank_L(\beta)$, and all the maximal chains of $[\alpha,\beta]$ have length $\rank_L(\beta)-\rank_L(\alpha)$. In particular, two distinct elements of $L$ of the same rank are incomparable, and, for every $\alpha\leq\beta$, the interval $[\alpha,\beta]$ contains an element of rank $\rho$ for every $\rank_L(\alpha)\leq\rho\leq\rank_L(\beta)$.
			\item $\rank_L(\alpha\wedge\beta)+\rank_L(\alpha\vee\beta)=\rank_L(\alpha)+\rank_L(\beta)$ for all $\alpha,\beta\in L$.
			\item For all $\alpha\leq\beta$, the interval $[\alpha,\beta]$ is closed under meet and join, so it is a distributive lattice with the operations of $L$.
		\end{enumerate}
	\end{Remark}

	\subsection{Thin distributive lattices}
	
	For a finite distributive lattice $L$ and an integer $i\geq0$, let $\rho_L(i)$ be the number of elements of $L$ of rank $i$. A finite simple distributive lattice $L$ of rank $d$ is called \emph{thin} if $\rho_L(i)=2$ for every $1\leq i\leq d-1$; see \cite{HH23}.
	Thin distributive lattices are also called \emph{generalized snake posets} \cite{vonBell}. Their structure is easily read off from the definition. We describe it in the following discussion, which also fixes the notation used throughout the paper.
	
	\begin{Discussion}\label{lem:structure}\rm
		From now on, unless otherwise stated, $L$ denotes a thin distributive lattice of rank $m+1$, with $m\geq1$. Thus $L=\{0_L,1_L\}\sqcup\{c_i,d_i:i\in[m]\}$, where $c_i$ and $d_i$ denote the two elements of $L$ of rank $i$, labelled in an arbitrary way; in particular, $|L|=2m+2$. For every $i\in[m]$ we set $a_i=c_i\wedge d_i$ and $b_i=c_i\vee d_i$, and we call $D_i=[a_i,b_i]$ the \emph{elementary diamond} of rank $i$.
		
		The elementary diamonds are the building blocks of $L$. Indeed, for every $i\in[m]$ the interval $D_i$ is a diamond, namely $D_i=\{a_i,c_i,d_i,b_i\}$, with $\rank_L(a_i)=i-1$ and $\rank_L(b_i)=i+1$. Consequently, $a_1=0_L$, $b_m=1_L$ and $L=D_1\cup\cdots\cup D_m$. Moreover, for $1\leq i\leq m-1$ we have $a_{i+1}\in\{c_i,d_i\}$ and $b_i\in\{c_{i+1},d_{i+1}\}$, so that $D_i\cap D_{i+1}=\{a_{i+1},b_i\}$ is an upper edge of $D_i$ and a lower edge of $D_{i+1}$. In other words, $L$ is obtained by gluing $D_1,\ldots,D_m$, each one along an edge of the previous one; see Figure~\ref{fig:thin-lattice}.
		
		Let $2\leq k\leq m-1$. The diamond $D_k$ shares the lower edge $\{a_k,b_{k-1}\}$ with $D_{k-1}$ and the upper edge $\{a_{k+1},b_k\}$ with $D_{k+1}$, where $b_{k-1},a_{k+1}\in\{c_k,d_k\}$. Hence exactly one of the following holds: either $b_{k-1}\neq a_{k+1}$, in which case these two edges of $D_k$ are opposite and $D_{k-1}\cap D_{k+1}=\emptyset$, or $b_{k-1}=a_{k+1}$, in which case they are adjacent and $D_{k-1}\cap D_{k+1}=\{b_{k-1}\}$. In the second case, $L$ changes direction at rank $k$. Accordingly, an integer $k\in\{2,\ldots,m-1\}$ is called a \emph{turning rank} of $L$ if $b_{k-1}=a_{k+1}$, that is, if $c_{k-1}\vee d_{k-1}=c_{k+1}\wedge d_{k+1}$. We denote by $\cT(L)$ the set of turning ranks of $L$; for instance, $\cT(L)=\{3,5,6,7,9,10,11\}$ for the lattice of Figure~\ref{fig:thin-lattice}.
		
		\begin{figure}[htbp]
			\centering
			\begin{tikzpicture}[x=0.62cm,y=0.62cm,dot/.style={circle,fill,inner sep=1pt},every node/.style={font=\scriptsize}]
				\begin{scope}[cm={1,1,-1,1,(0,0)}]
					\foreach \x/\y/\i in {0/0/1,1/0/2,2/0/3,2/1/4,2/2/5,3/2/6,3/3/7,4/3/8,5/3/9,5/4/10,6/4/11,6/5/12}{
						\ifnum\i=3 \fill[gray!25] (\x,\y) rectangle ++(1,1);\fi
						\ifnum\i=5 \fill[gray!25] (\x,\y) rectangle ++(1,1);\fi
						\ifnum\i=6 \fill[gray!25] (\x,\y) rectangle ++(1,1);\fi
						\ifnum\i=7 \fill[gray!25] (\x,\y) rectangle ++(1,1);\fi
						\ifnum\i=9 \fill[gray!25] (\x,\y) rectangle ++(1,1);\fi
						\ifnum\i=10 \fill[gray!25] (\x,\y) rectangle ++(1,1);\fi
						\ifnum\i=11 \fill[gray!25] (\x,\y) rectangle ++(1,1);\fi
						\draw (\x,\y) rectangle ++(1,1);
						\foreach \dx/\dy in {0/0,1/0,0/1,1/1} \node[dot] at (\x+\dx,\y+\dy) {};
						\node at (\x+0.5,\y+0.5) {$D_{\i}$};
					}
					\node[below] at (0,0) {$0_L$};
					\node[above] at (7,6) {$1_L$};
				\end{scope}
			\end{tikzpicture}
			\caption{The Hasse diagram of a thin distributive lattice of rank $13$. The elementary diamonds of turning rank are shaded, so that $\cT(L)=\{3,5,6,7,9,10,11\}$.}
			\label{fig:thin-lattice}
		\end{figure}
		
		An interval of $L$ isomorphic to $\Chain{N+1}\times\Chain2$, for some $N\geq1$, is called a \emph{ladder}. Consecutive elementary diamonds with no turning rank in between are glued along opposite edges, and so they form a ladder. We fix the following notation for ladders, which will be used in several proofs. 
		
		Let $1\leq s\leq t\leq m$ be such that no turning rank $k$ of $L$ satisfies $s<k<t$, and set $N=t-s+1$. Then $[a_s,b_t]=D_s\cup\cdots\cup D_t$ is a ladder, and we denote by $\lambda_j$ and $\mu_j$ the elements of $[a_s,b_t]$ corresponding to $(j,0)$ and $(j,1)$ of $\Chain{N+1}\times\Chain2$, respectively, for $0\leq j\leq N$; see Figure~\ref{fig:ladder}. Thus $\lambda_j=a_{s+j}$ for $0\leq j\leq N-1$ and $\mu_j=b_{s+j-1}$ for $1\leq j\leq N$, and for every $j\in[N]$ we have $D_{s+j-1}=[\lambda_{j-1},\mu_j]=\{\lambda_{j-1},\mu_{j-1},\lambda_j,\mu_j\}$ and $\{c_{s+j-1},d_{s+j-1}\}=\{\mu_{j-1},\lambda_j\}$. Moreover, computing componentwise in $\Chain{N+1}\times\Chain2$, we see that $\lambda_i\leq\mu_j$ if and only if $i\leq j$, that $\mu_i\nleq\lambda_j$ for all $0\leq i,j\leq N$, and that, for $0\leq i<j\leq N$, the elements $\mu_i$ and $\lambda_j$ are incomparable, with $\mu_i\wedge\lambda_j=\lambda_i$ and $\mu_i\vee\lambda_j=\mu_j$.
		
		\begin{figure}[htbp]
			\centering
			\begin{tikzpicture}[x=0.55cm,y=0.55cm,dot/.style={circle,fill,inner sep=1.2pt},every node/.style={font=\footnotesize}]
				\begin{scope}[cm={1,1,-1,1,(0,0)}]
					\foreach \k in {1,...,5} \draw (\k-1,0) rectangle (\k,1);
					\foreach \k in {0,...,5} {\node[dot] at (\k,0) {}; \node[dot] at (\k,1) {};}
					\foreach \k in {0,...,5} \node[below right] at (\k,0) {$\lambda_{\k}$};
					\foreach \k in {0,...,5} \node[above left] at (\k,1) {$\mu_{\k}$};
				\end{scope}
			\end{tikzpicture}
			\caption{The Hasse diagram of a ladder $\Chain{6}\times\Chain2$.}
			\label{fig:ladder}
		\end{figure}
		
		Finally, let $k_1<\cdots<k_q$ be the turning ranks of $L$, and set $k_0=1$ and $k_{q+1}=m$. Then the intervals $[a_{k_j},b_{k_{j+1}}]$, $0\leq j\leq q$, are ladders, two consecutive ones share exactly the elementary diamond $D_{k_j}$, and $L$ is their union. Roughly speaking, $L$ is a staircase whose steps are ladders, glued along the elementary diamonds of turning rank; see Figure~\ref{fig:thin-lattice}. Since, for $k\in\cT(L)$, the element $b_{k-1}=a_{k+1}$ is comparable with every element of $L$, two incomparable elements of $L$ always lie in a common ladder, where they are of the form $\mu_i$ and $\lambda_j$ with $i<j$.
	\end{Discussion}

	\subsection{Join-meet ideals and rank identifications}
	
	Let $K$ be a field, and let $S_L=K[x_\alpha:\alpha\in L]$ be the polynomial ring over $K$ whose variables are indexed by the elements of $L$, with the standard grading $\deg x_\alpha=1$. For us, $\NN=\{0,1,2,\ldots\}$. For $\uu\in\NN^L$ we denote by $\uu(w)$ the coordinate of $\uu$ indexed by $w\in L$, and we set $\xx^\uu=\prod_{w\in L}x_w^{\uu(w)}$, so that every monomial of $S_L$ is of the form $\xx^\uu$ for a unique $\uu\in\NN^L$. By a \emph{binomial} we mean a polynomial $\xx^\uu-\xx^\vv$ with $\uu,\vv\in\NN^L$ and $\uu\neq\vv$. The \emph{join-meet ideal} of $L$ is
	\[
	I_L=(x_\alpha x_\beta-x_{\alpha\wedge\beta}x_{\alpha\vee\beta}:\alpha,\beta\in L\text{ incomparable})\subseteq S_L,
	\]
	and its generators are called the \emph{join-meet binomials} of $L$. By \cite{Hibi}, the \emph{Hibi ring} $K[L]=S_L/I_L$ is a normal Cohen--Macaulay domain of dimension $|P_L|+1$, and the standard monomials $x_{\alpha_1}x_{\alpha_2}\cdots x_{\alpha_q}$, with $\alpha_1\leq\alpha_2\leq\cdots\leq\alpha_q$, form a $K$-basis of $K[L]$. Since $|P_L|=\rank_L(1_L)=m+1$ by Remark~\ref{lem:basic}(a), we have $\dim K[L]=m+2$.
	
	For every $i\in[m]$, we consider the linear binomial $\ell_i=x_{c_i}-x_{d_i}$ and, for $A\subseteq[m]$, we set
	\[
	J_A=I_L+(\ell_i:i\in A).
	\]
	Thus $S_L/J_A$ is obtained from $K[L]$ by identifying the two elements of rank $i$ for every $i\in A$. Note that $J_A$ does not depend on the labelling of the elements of rank $i$, since exchanging $c_i$ and $d_i$ replaces $\ell_i$ by $-\ell_i$.
	
	\begin{Definition}\label{Def admissible}
		A subset $A\subseteq[m]$ is \emph{admissible} for $L$ if the following conditions hold:
		\begin{enumerate}[(i)]
			\item $A\cap\cT(L)=\emptyset$;
			\item $\{k-1,k+1\}\nsubseteq A$ for every $k\in\cT(L)$.
		\end{enumerate}
	\end{Definition}
	
	Our main result is the following.
	
	\begin{Theorem}\label{Thm for primality}
		Let $L$ be a thin distributive lattice of rank $m+1$, and let $A\subseteq[m]$. Then $J_A$ is a prime ideal if and only if $A$ is admissible.
	\end{Theorem}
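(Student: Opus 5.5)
The proof splits into necessity and sufficiency, following the outline in the introduction.

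\emph{Necessity.} For each failure of admissibility I will exhibit an explicit zero-divisor modulo $J_A$. Suppose first that $k\in A\cap\cT(L)$, and set $p=b_{k-1}=a_{k+1}$, which lies in $\{c_k,d_k\}$; call the other element $q$. Then $D_k=\{a_k,p,q,b_k\}$ with $p\wedge q=a_k$ and $p\vee q=b_k$, so
\[
x_px_q-x_{a_k}x_{b_k}\in I_L.
\]
Modulo $\ell_k$ this becomes $x_p^2-x_{a_k}x_{b_k}$. The natural candidate for a zero-divisor comes from the diamonds $D_{k-1}$ and $D_{k+1}$, which meet only in $p$. Let $u,u'$ be the two middle elements of $D_{k-1}$, so that $p=u\vee u'$, and let $v,v'$ be those of $D_{k+1}$, so that $p=v\wedge v'$. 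Then
\[
x_ux_{u'}=x_{a_{k-1}}x_p, \qquad x_vx_{v'}=x_px_{b_{k+1}}.
\]
Using these relations I will look for a product of two binomials, each of the form $x_px_q-x_{a_k}x_{b_k}$ or a close variant, that vanishes in $S_L/J_A$ while neither factor does. For instance, in $S_L/J_A$ one computes
\[
(x_p-x_q)(x_p+x_q)=x_p^2-x_q^2=0,
\]
since $x_p=x_q$ there; this is useless as it stands. The better route is a degree count: $x_q$ times something equals $x_p$ times something, giving a relation of the form $x_{a_k}\cdot f=0$ with $f\notin J_A$. To certify non-membership I will specialize to a small lattice (ranks $k-1,k,k+1$), where the claim reduces to a direct computation, for instance a Gröbner basis computation, or exhibiting a monomial point of the variety on which $f$ does not vanish.

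The case $\{k-1,k+1\}\subseteq A$ with $k$ a turning rank is treated the same way: identifying $u=u'$ and $v=v'$ produces
\[
x_u^2=x_{a_{k-1}}x_p, \qquad x_v^2=x_px_{b_{k+1}},
\]
and hence
\[
x_u^2x_v^2=x_p^2x_{a_{k-1}}x_{b_{k+1}}.
\]
Combining this with $x_px_q=x_{a_k}x_{b_k}$ and the comparabilities around $p$, I expect a factorization such as
\[
(x_ux_v-x_px_w)(x_ux_v+x_px_w)\in J_A
\]
for a suitable $w$ with $x_w^2\equiv x_{a_{k-1}}x_{b_{k+1}}$. Alternatively, there may be a binomial $g$ with $x_p g\in J_A$ but $g\notin J_A$. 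Either way, the obstruction is the sign ambiguity of a square root, so a suitable sign choice, or a zero with some coordinate vanishing, witnesses that $J_A$ is not prime.

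\emph{Reduction for sufficiency.} Since $J_A$ is generated by binomials, I will first substitute $x_{d_i}=x_{c_i}$ for $i\in A$. This identifies $S_L/J_A$ with $T/\bar J$, where $T$ is a polynomial ring in fewer variables and $\bar J$ is a pure binomial ideal. The ideal $\bar J$ is prime exactly when it equals the lattice ideal $I_{\Lambda_A}$ of the lattice spanned by its exponent differences and that lattice is saturated. Saturation follows once I show that the toric map described in the abstract has kernel containing $\bar J$ and that $\Lambda_A$ equals the kernel lattice. The toric map is Hibi's parametrization $x_\alpha\mapsto t\prod_{p\in\ideal\alpha}s_p$, with suitable pairs of $s$-variables identified. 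Since $\bar J\subseteq I_{\Lambda_A}$ always holds, the whole task is to show that every binomial of $I_{\Lambda_A}$ lies in $\bar J$.

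\emph{Main step.} This is where I expect the real difficulty. Binomials of degree two in $I_{\Lambda_A}$ can be checked directly inside a single ladder or across one turning rank. For a binomial $\xx^\uu-\xx^\vv$ of degree at least three, I will argue by induction on $m$, removing the last elementary diamond $D_m$, or the last ladder. The key tool is a family of linear functionals on $\ZZ^L$ vanishing on $\Lambda_A$ (the $A$-weights), for instance the coordinate sums over each rank or over suitable pairs of ranks. These show that $\uu$ and $\vv$ place the same total weight at the top ranks. Reducing both monomials to standard form modulo $I_L$, which puts them on chains, and using the $\ell_i$, I then aim to find a common variable $x_w$ such that, modulo $\bar J$,
\[
\xx^\uu=x_w\xx^{\uu'}, \qquad \xx^\vv=x_w\xx^{\vv'}.
\]
Induction on the degree then finishes the argument.

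Admissibility is used exactly here. Around a turning rank $k$ the element $p$ is an apex, and the two sides of $p$ can exchange weight only through ranks $k-1$ and $k+1$. If at most one of these ranks is identified, the $A$-weights on each side of $p$ stay independent, so the exponents of $x_p$ in $\uu$ and $\vv$ agree, up to moves available in $\bar J$. This lets the induction cut $L$ at $p$ and treat $L$ as two thin lattices glued at an apex.
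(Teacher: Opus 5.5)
Your outline follows the same overall architecture as the paper: explicit zero-divisors for necessity, and $J_A=I_{\Lambda_A}$ via $A$-weights and a double induction for sufficiency. However, neither half is carried out, and several guiding claims are false.

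\textbf{Necessity.} You never exhibit the binomial. Write $p=b_{k-1}=a_{k+1}$, let $q$ be the other element of rank $k$, let $u_0$ be the element of rank $k-1$ other than $a_k$, and let $v_0$ be the element of rank $k+1$ other than $b_k$. The witnesses are
\[
x_{a_k}\bigl(x_{a_{k-1}}x_{b_{k+1}}-x_{u_0}x_{v_0}\bigr)\in J_A\ \ (k\in A),\qquad x_p\bigl(x_{a_{k-1}}x_{b_{k+1}}-x_q^2\bigr)\in J_A\ \ (\{k-1,k+1\}\subseteq A).
\]
The first one needs the join-meet binomial of the incomparable pair $v_0,q$, with $v_0\wedge q=a_k$ and $v_0\vee q=b_{k+1}$; your sketch does not mention it. Your factorization $(x_ux_v-x_px_w)(x_ux_v+x_px_w)$ cannot work, since it requires $x_w^2-x_{a_{k-1}}x_{b_{k+1}}\in J_A$, which is exactly the kind of binomial that fails to lie in $J_A$. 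The obstruction is also not a sign ambiguity. $\Lambda_A$ is saturated for every $A$, because the $\etab_i$ and $\rhob_i$ extend to a $\ZZ$-basis of $\ZZ^L$. So the failure of primality is $J_A\neq J_A:X_L^\infty$, i.e.\ extra components in coordinate hyperplanes.

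Non-membership of the second factor needs an honest certificate. ``Specializing to ranks $k-1,k,k+1$'' requires a ring map killing all of $J_A$, including $\ell_{k\pm2}$, whose variables straddle that interval. One map that works lands in $K[\varepsilon,\eta]/(\varepsilon^2,\eta^2)$:
\begin{itemize}
\item[] send $x_{a_{k-1}}$ to $\varepsilon$, or both rank-$(k-2)$ variables if $k-2\in A$;
\item[] send $x_{b_{k+1}}$ to $\eta$, or both rank-$(k+2)$ variables if $k+2\in A$;
\item[] send every other variable to $0$.
\end{itemize}

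\textbf{Sufficiency.} This is where the proof lives, and here you only state intentions resting on a false picture.
\begin{itemize}
\item[] $p=b_{k-1}=a_{k+1}$ is not an apex: $L$ is simple, and $p$, $q$ are incomparable. Consecutive ladders share the whole diamond $D_k$, not a point. The exponent of $x_p$ is not invariant, e.g.\ $x_px_q-x_{a_k}x_{b_k}\in I_L$. So ``cutting $L$ at $p$'' is not available.
\item[] Rank-wise coordinate sums are not $A$-weights: $\etab_i$ contributes $-2$ at rank $i$. The functionals that vanish for free, namely degree and $\uu\mapsto\sum_w\rank_L(w)\uu(w)$, are too coarse to force a common variable.
\item[] Degree two is not local. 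If $\{k-1,k+1\}\subseteq A$, then $x_{a_{k-1}}x_{b_{k+1}}-x_q^2\in I_{\Lambda_A}$ spans two ladders, so degree-two membership in $J_A$ needs the same global argument.
\end{itemize}

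The missing idea is a concrete inductive mechanism at the top of $L$. Delete $1_L$ and the rank-$m$ element $q_m\neq b_{m-1}$.
\begin{itemize}
\item[] If $x_{1_L}$ occurs in neither monomial, eliminate $x_{q_m}$ and pass to $L'=[0_L,b_{m-1}]$. Use $\ell_m$ if $m\in A$, and the weight $\delta_{\{1_L,q_m\}}$ if $m\notin A$.
\item[] If $x_{1_L}$ divides one side, use two weights supported on the last ladder. One records at each position the length of the run of identified diamonds ending there; the other is the indicator of the block $F$ spanned by the final run. Together they force a variable of $F$ into the other monomial. Then push it up to $x_{1_L}$ by linear and join-meet moves, so both sides share $x_{1_L}$.
\end{itemize}
Admissibility (ii) is needed exactly when this final run starts right after the last turning rank $r$. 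There the indicator of $F$ fails to be a weight on $D_{r-1}$. It can be repaired by extending it through the previous ladder, which works because $r-1\notin A$.
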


	\section{Necessary conditions and the associated lattice ideal}\label{sec:necessity}
	
	In this section we prove that the conditions of Definition~\ref{Def admissible} are necessary for the primality of $J_A$, and we show that $J_A$ is prime if and only if it is a lattice ideal. We first fix the notation for the elements of $L$ around a turning rank, which will be used throughout the section; all the relations below can be read off from Figure~\ref{fig:turning-rank}.
	
	\begin{Setup}\label{lem:turning-config}\rm
		Let $k\in\cT(L)$. Set $d=b_{k-1}=a_{k+1}$, $c=a_k$, $f=b_k$, $a=a_{k-1}$ and $v=b_{k+1}$, and let $e$, $b$ and $u$ be the elements of rank $k$, $k-1$ and $k+1$ different from $d$, $c$ and $f$, respectively. Then $D_{k-1}=\{a,b,c,d\}$, $D_k=\{c,d,e,f\}$ and $D_{k+1}=\{d,u,f,v\}$.
		Moreover, each of the pairs $\{b,c\}$, $\{d,e\}$, $\{u,f\}$, $\{b,e\}$, $\{u,e\}$ consists of incomparable elements, and we have the following relations: $b\wedge c=a$ and $b\vee c=d$, $d\wedge e=c$ and $d\vee e=f$, $u\wedge f=d$ and $u\vee f=v$, $b\wedge e=a$ and $b\vee e=f$, $u\wedge e=c$ and $u\vee e=v$. 	Finally, $\ell_{k-1}=\pm(x_b-x_c)$, $\ell_k=\pm(x_d-x_e)$ and $\ell_{k+1}=\pm(x_u-x_f)$. Note that $a=0_L$ if $k=2$, and $v=1_L$ if $k=m-1$.
	\end{Setup}
	
	\begin{figure}[htbp]
		\centering
		\begin{tikzpicture}[x=1.35cm,y=1cm,dot/.style={circle,fill,inner sep=1.3pt},every node/.style={font=\small}]
			\coordinate (a) at (-1,0);
			\coordinate (b) at (-2,1); \coordinate (c) at (0,1);
			\coordinate (d) at (-1,2); \coordinate (e) at (1,2);
			\coordinate (u) at (-2,3); \coordinate (f) at (0,3);
			\coordinate (v) at (-1,4);
			\foreach \s/\t in {a/b,a/c,b/d,c/d,c/e,d/u,d/f,e/f,u/v,f/v} \draw (\s)--(\t);
			\foreach \s in {a,b,c,d,e,f,u,v} \node[dot] at (\s) {};
			\node[below] at (a) {$a$}; \node[left] at (b) {$b$};
			\node[below right] at (c) {$c$}; \node[left=3pt] at (d) {$d$};
			\node[right] at (e) {$e$}; \node[left] at (u) {$u$};
			\node[above right] at (f) {$f$}; \node[above] at (v) {$v$};
			\node at (-1,1) {$D_{k-1}$}; \node at (0,2) {$D_{k}$}; \node at (-1,3) {$D_{k+1}$};
			\foreach \r/\y in {k-2/0,k-1/1,k/2,k+1/3,k+2/4} \node[right] at (2.2,\y) {rank $\r$};
		\end{tikzpicture}
		\caption{The elementary diamonds $D_{k-1}$, $D_k$ and $D_{k+1}$ around a turning rank $k$.}
		\label{fig:turning-rank}
	\end{figure}
	
	The next lemma provides a tool to show that certain binomials do not belong to $J_A$.
	
	\begin{Lemma}\label{lem:separation}
		Let $A\subseteq[m]$, let $k\in\cT(L)$, and use the notation of Setup~\ref{lem:turning-config}. Set $E_a=\{c_{k-2},d_{k-2}\}$ if $k-2\in A$, and $E_a=\{a\}$ otherwise; similarly, set $E_v=\{c_{k+2},d_{k+2}\}$ if $k+2\in A$, and $E_v=\{v\}$ otherwise. Let $\cN=K[\varepsilon,\eta]/(\varepsilon^2,\eta^2)$, and let $\chi:S_L\to\cN$ be the homomorphism of $K$-algebras defined by
		\[
		\chi(x_w)=
		\begin{cases}
			\varepsilon,&w\in E_a,\\
			\eta,&w\in E_v,\\
			0,&w\notin E_a\cup E_v.
		\end{cases}
		\]
		Then $J_A\subseteq\ker\chi$, while $\chi(x_ax_v)=\varepsilon\eta\neq0$ and $\chi(x_bx_u)=\chi(x_e^2)=0$.
	\end{Lemma}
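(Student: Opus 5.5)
Since $\chi$ is a $K$-algebra homomorphism, $\ker\chi$ is an ideal, so to get $J_A\subseteq\ker\chi$ it suffices to check the generators: each linear form $\ell_i$ with $i\in A$, and each join-meet binomial $x_\alpha x_\beta-x_{\alpha\wedge\beta}x_{\alpha\vee\beta}$ with $\alpha,\beta$ incomparable. The last two claims are direct. $a\in E_a$ always: either $E_a=\{a\}$, or $a\in\{c_{k-2},d_{k-2}\}$ because $a=a_{k-1}$ has rank $k-2$. Likewise $v\in E_v$, so $\chi(x_ax_v)=\varepsilon\eta$, which is nonzero in $\cN$. The elements $b$, $u$, $e$ have ranks $k-1$, $k+1$, $k$, which are neither $k-2$ nor $k+2$, so they lie outside $E_a\cup E_v$. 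Hence $\chi(x_b)=\chi(x_u)=\chi(x_e)=0$. (If $k=2$ then $a=0_L$ and $E_a=\{0_L\}$ since $0\notin A$; similarly if $k=m-1$.)

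For the linear forms, take $i\in A$. If $i=k\pm2$, then both $c_i,d_i$ lie in $E_a$ (resp.\ $E_v$), so $\chi(\ell_i)=\varepsilon-\varepsilon=0$ (resp.\ $\eta-\eta$). If $i\ne k\pm2$, neither $c_i$ nor $d_i$ lies in $E_a\cup E_v$, because these sets only contain elements of rank $k-2$ or $k+2$. Thus $\chi(\ell_i)=0$.

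For the quadratic generators, the plan is a rank argument. $\chi(x_\alpha x_\beta)\ne0$ forces one factor in $E_a$ and one in $E_v$, since $\varepsilon^2=\eta^2=0$. So $\{\rank\alpha,\rank\beta\}=\{k-2,k+2\}$. The key structural fact is that every element of rank at most $k-1$ lies below $d$, and every element of rank at least $k+1$ lies above $d$. The first holds because elements of rank $\le k-1$ lie in $D_1\cup\dots\cup D_{k-1}=[0_L,b_{k-1}]$, and the second similarly via $[a_{k+1},1_L]$; this is the statement in Discussion~\ref{lem:structure} that $d$ is an apex. So such $\alpha\le d\le\beta$ are comparable, and no join-meet binomial has a nonvanishing first term.

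For the second term $x_{\alpha\wedge\beta}x_{\alpha\vee\beta}$ with $\alpha,\beta$ incomparable: by the same apex argument, both incomparable elements lie on one side of $d$ (or one equals $d$, which is impossible since $d$ is comparable to everything). Suppose both have rank at most $k$; the other case is symmetric. Then $\alpha\vee\beta$ has rank at most $k+1$. Indeed, incomparable elements lie in a common ladder as $\mu_i,\lambda_j$ with $i<j$, and the join $\mu_j$ has rank $\rank\lambda_j+1$. So $\alpha\vee\beta\notin E_v$. If the meet were in $E_v$, it would have rank $k+2$, which is absurd. The only remaining possibility for a nonzero term is $\{\alpha\wedge\beta,\alpha\vee\beta\}$ meeting both $E_a$ and $E_v$, which would need a join of rank $k+2$ and has been excluded. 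So $\chi$ vanishes on every generator, giving $J_A\subseteq\ker\chi$.

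The main obstacle is making the rank bookkeeping around the apex $d$ rigorous. In particular one needs that a join of two incomparable elements of rank at most $k$ cannot reach rank $k+2$; this follows from the ladder description $\mu_i\vee\lambda_j=\mu_j$ with $\rank\mu_j=\rank\lambda_j+1$.
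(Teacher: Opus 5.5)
Your proof is correct and follows essentially the same route as the paper: you check the generators, handle the linear forms by rank, and kill $x_\alpha x_\beta$ using that $d$ is an apex, so every element of $E_a$ lies below every element of $E_v$. The only variation is in the term $x_{\alpha\wedge\beta}x_{\alpha\vee\beta}$. There the paper uses $\rank(\alpha\wedge\beta)+\rank(\alpha\vee\beta)=\rank\alpha+\rank\beta$ to reduce to the pair $\{d,e\}$ or to a pair of ranks $k\pm1$, whereas you split according to the side of $d$ and use the ladder fact $\rank\mu_j=\rank\lambda_j+1$ (and its dual for meets); both arguments work equally well.
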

	
	\begin{proof}
		All the elements of $E_a$ have rank $k-2$ and all the elements of $E_v$ have rank $k+2$; in particular, $E_a\cap E_v=\emptyset$ and $\chi$ is well defined. Moreover, $a\in E_a$ and $v\in E_v$, since $a$ has rank $k-2$, $v$ has rank $k+2$, and $\{c_{k-2},d_{k-2}\}$, respectively $\{c_{k+2},d_{k+2}\}$, is the set of all the elements of that rank whenever $k-2\in A$, respectively $k+2\in A$. Since $b,u,e$ have rank $k-1$, $k+1$ and $k$, we get $\chi(x_ax_v)=\varepsilon\eta$ and $\chi(x_bx_u)=\chi(x_e^2)=0$.
		
		Let $i\in A$. If $i\notin\{k-2,k+2\}$, then $c_i,d_i\notin E_a\cup E_v$, and so $\chi(\ell_i)=0$. If $i=k-2$, then $c_i,d_i\in E_a$ and $\chi(\ell_i)=0$, and similarly if $i=k+2$. Therefore $\ell_i\in\ker\chi$ for every $i\in A$.
		Observe that $d=b_{k-1}=a_{k+1}$ is greater than every element of rank smaller than $k$ and smaller than every element of rank greater than $k$, as one sees from Figure~\ref{fig:thin-lattice}. In particular, every element of $E_a$ is smaller than every element of $E_v$.
		
		Now note that, for $w,w'\in L$, we have $\chi(x_wx_{w'})\neq0$ if and only if one of $w,w'$ belongs to $E_a$ and the other one belongs to $E_v$, since $\varepsilon^2=\eta^2=0$. Let $\alpha,\beta\in L$ be incomparable. By the previous paragraph, $\alpha$ and $\beta$ cannot belong one to $E_a$ and the other one to $E_v$, so $\chi(x_\alpha x_\beta)=0$. Assume that $\chi(x_{\alpha\wedge\beta}x_{\alpha\vee\beta})\neq0$. Since $\rank_L(\alpha\wedge\beta)<\rank_L(\alpha\vee\beta)$, we must have $\alpha\wedge\beta\in E_a$ and $\alpha\vee\beta\in E_v$, so that $\rank_L(\alpha\wedge\beta)=k-2$ and $\rank_L(\alpha\vee\beta)=k+2$. Since $\alpha$ and $\beta$ are incomparable, their ranks lie strictly between $k-2$ and $k+2$, and $\rank_L(\alpha)+\rank_L(\beta)=2k$ by Remark~\ref{lem:basic}(b). Hence either $\alpha$ and $\beta$ both have rank $k$, or one of them has rank $k-1$ and the other one has rank $k+1$. In the first case $\{\alpha,\beta\}=\{d,e\}$, and so $\alpha\wedge\beta=c$ has rank $k-1$, a contradiction. In the second case, with the notation of Setup~\ref{lem:turning-config}, one of $\alpha,\beta$ belongs to $\{b,c\}$ and the other one to $\{u,f\}$; since $b,c<d<u,f$, the elements $\alpha$ and $\beta$ are comparable, again a contradiction. Hence $\chi(x_\alpha x_\beta-x_{\alpha\wedge\beta}x_{\alpha\vee\beta})=0$ for all incomparable $\alpha,\beta$, and $J_A\subseteq\ker\chi$.
	\end{proof}
	
	We are ready to prove that the conditions of Definition~\ref{Def admissible} are necessary for the primality of $J_A$.
	
	\begin{Proposition}\label{prop:turning-relation-nonprime}
		Let $L$ be a thin distributive lattice of rank $m+1$, and let $A\subseteq[m]$. If $A\cap\cT(L)\neq\emptyset$, then $J_A$ is not prime.
	\end{Proposition}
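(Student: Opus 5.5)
Fix $k\in A\cap\cT(L)$ and use the notation of Setup~\ref{lem:turning-config}. The plan is to exhibit two elements whose product lies in $J_A$ while neither factor does, using Lemma~\ref{lem:separation} to certify the non-membership. The natural candidates are the linear form $x_d-x_e$ (up to sign this is $\ell_k$, so it lies in $J_A$) and the degree-two relations at the turning rank. Since $\ell_k\in J_A$, we have $x_d\equiv x_e$ modulo $J_A$. Together with the join-meet binomials $x_bx_e-x_ax_f$ and $x_bx_c-x_ax_d$, and $x_ux_e-x_cx_v$ and $x_ux_f-x_dx_v$, this should give the congruences below, from which the zero-divisor is read off.

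Concretely, modulo $J_A$ I would compute
\[
x_ax_f\equiv x_bx_e\equiv x_bx_d,\qquad x_cx_v\equiv x_ux_e\equiv x_ux_d .
\]
Also $x_d^2\equiv x_dx_e\equiv x_cx_f$, since $d\wedge e=c$ and $d\vee e=f$. Next I multiply $x_ax_v$ by $x_d^2$:
\[
x_ax_v x_d^2\equiv x_ax_vx_cx_f\equiv (x_ax_f)(x_cx_v)\equiv x_bx_d\,x_ux_d=x_bx_u\,x_d^2 .
\]
Thus $x_d^2(x_ax_v-x_bx_u)\in J_A$. The target factorization is therefore $x_d^2\cdot(x_ax_v-x_bx_u)$, or more simply $x_d\cdot x_d\cdot(x_ax_v-x_bx_u)$. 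If $J_A$ were prime, then either $x_d\in J_A$ or $x_ax_v-x_bx_u\in J_A$.

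Both alternatives are ruled out. First, $x_d\notin J_A$, because $J_A$ is generated by homogeneous binomials and linear binomials. Indeed, the map sending every variable to $1$ kills $J_A$ but not $x_d$. Second, $x_ax_v-x_bx_u\notin J_A$ by Lemma~\ref{lem:separation}: we have $J_A\subseteq\ker\chi$, while $\chi(x_ax_v-x_bx_u)=\varepsilon\eta-0\neq0$. Hence $J_A$ is not prime.

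The main obstacle is checking the congruences carefully: verifying that each pair used is incomparable with the stated meet and join. Setup~\ref{lem:turning-config} lists exactly the pairs needed: $\{b,e\}$, $\{u,e\}$ and $\{d,e\}$. The replacement of $x_e$ by $x_d$ is legitimate precisely because $k\in A$. The boundary cases $k=2$ or $k=m-1$, where $a=0_L$ or $v=1_L$, need no separate treatment, since the argument only uses the listed relations.
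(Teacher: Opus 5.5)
Your argument is correct and matches the paper's proof: the same binomial $g=x_ax_v-x_bx_u$, with Lemma~\ref{lem:separation} giving $g\notin J_A$ and the evaluation at $1$ ruling out the monomial factor. The only difference is the multiplier. The paper uses $x_c$, via the identity $x_cg=x_a(x_cx_v-x_ux_e)+x_u(x_ax_d-x_bx_c)-x_ax_u(x_d-x_e)$, whereas your $x_d^2$ needs one extra join-meet relation but works equally well.
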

	
	\begin{proof}
		Choose $k\in A\cap\cT(L)$, use the notation of Setup~\ref{lem:turning-config}, and consider the binomial $g=x_ax_v-x_bx_u$. We prove that $x_cg\in J_A$, while neither $x_c$ nor $g$ belongs to $J_A$. By Setup~\ref{lem:turning-config}, the binomials $x_ux_e-x_cx_v$ and $x_bx_c-x_ax_d$ are join-meet binomials of $L$, and $x_d-x_e=\pm\ell_k\in J_A$. Since
		\[
		x_cg=x_a(x_cx_v-x_ux_e)+x_u(x_ax_d-x_bx_c)-x_ax_u(x_d-x_e),
		\]
		we get $x_cg\in J_A$.
		
		Let $\epsilon:S_L\to K$ be the homomorphism of $K$-algebras such that $\epsilon(x_w)=1$ for every $w\in L$. Every generator of $J_A$ is a difference of two monomials, so $J_A\subseteq\ker\epsilon$, while $\epsilon(M)=1$ for every monomial $M$. Hence $J_A$ contains no monomial and, in particular, $x_c\notin J_A$. Finally, with the notation of Lemma~\ref{lem:separation}, we have $J_A\subseteq\ker\chi$ and $\chi(g)=\chi(x_ax_v)-\chi(x_bx_u)=\varepsilon\eta\neq0$, so $g\notin J_A$. Therefore $J_A$ is not prime.
	\end{proof}
	
	\begin{Proposition}\label{prop:neighboring-relations-nonprime}
		Let $L$ be a thin distributive lattice of rank $m+1$, and let $A\subseteq[m]$. If there exists $k\in\cT(L)$ such that $\{k-1,k+1\}\subseteq A$, then $J_A$ is not prime.
	\end{Proposition}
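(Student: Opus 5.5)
The plan is to imitate the proof of Proposition~\ref{prop:turning-relation-nonprime}: exhibit a binomial $g\notin J_A$ and a monomial $M$ with $Mg\in J_A$. The non-membership of $g$ will come from Lemma~\ref{lem:separation}. Fix $k\in\cT(L)$ with $\{k-1,k+1\}\subseteq A$ and use the notation of Setup~\ref{lem:turning-config}. Then $x_b-x_c=\pm\ell_{k-1}\in J_A$ and $x_u-x_f=\pm\ell_{k+1}\in J_A$. Lemma~\ref{lem:separation} singles out $x_e^2$ as a monomial killed by $\chi$, so I take
\[
g=x_e^2-x_ax_v,\qquad M=x_cx_f .
\]

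\emph{Step 1: $Mg\in J_A$.} By Setup~\ref{lem:turning-config}, $x_bx_e-x_ax_f$ and $x_ux_e-x_cx_v$ are join-meet binomials of $L$. Modulo $J_A$ we may replace $x_b$ by $x_c$ and $x_u$ by $x_f$, so $x_cx_e\equiv x_ax_f$ and $x_fx_e\equiv x_cx_v$. Explicitly, I will verify the identity
\[
x_cx_f\,g=x_cx_e\,(x_ex_f-x_cx_v)+x_cx_v\,(x_cx_e-x_ax_f).
\]
Then I rewrite each factor in terms of generators:
\[
x_ex_f-x_cx_v=(x_ux_e-x_cx_v)-x_e(x_u-x_f),
\]
\[
x_cx_e-x_ax_f=(x_bx_e-x_ax_f)-x_e(x_b-x_c).
\]
Both right-hand sides lie in $J_A$, hence $x_cx_fg\in J_A$.

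\emph{Step 2: $M\notin J_A$.} Exactly as in the proof of Proposition~\ref{prop:turning-relation-nonprime}, the evaluation $\epsilon$ at all ones vanishes on $J_A$ but not on monomials. Hence $J_A$ contains no monomial; in particular $x_c\notin J_A$ and $x_f\notin J_A$.

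\emph{Step 3: $g\notin J_A$.} By Lemma~\ref{lem:separation}, $J_A\subseteq\ker\chi$, while $\chi(g)=\chi(x_e^2)-\chi(x_ax_v)=-\varepsilon\eta\neq0$.

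Combining the three steps, if $J_A$ were prime then $x_cx_fg\in J_A$ would force $x_c$, $x_f$ or $g$ to lie in $J_A$, which is impossible. Hence $J_A$ is not prime.

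The only delicate point is choosing the right pair $(g,M)$. Once that is fixed, Step 1 is a routine identity check, and Steps 2 and 3 are already supplied by Lemma~\ref{lem:separation} and the argument of Proposition~\ref{prop:turning-relation-nonprime}.
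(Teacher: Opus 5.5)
Your proof is correct and follows the paper's strategy: the same binomial $g$ (up to sign), non-membership of $g$ via Lemma~\ref{lem:separation}, and the observation that $J_A$ contains no monomial. The only difference is the certificate $Mg\in J_A$: the paper multiplies by the single variable $x_d$ and uses the join-meet binomials of the pairs $\{u,f\}$, $\{b,e\}$, $\{d,e\}$, whereas you multiply by $x_cx_f$ and use only the pairs $\{b,e\}$ and $\{u,e\}$; your identity checks out.
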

	
	\begin{proof}
		Choose $k\in\cT(L)$ with $k-1,k+1\in A$, use the notation of Setup~\ref{lem:turning-config}, and consider the binomial $g=x_ax_v-x_e^2$. By Setup~\ref{lem:turning-config}, the binomials $x_ux_f-x_dx_v$, $x_bx_e-x_ax_f$ and $x_dx_e-x_cx_f$ are join-meet binomials of $L$, and $x_b-x_c=\pm\ell_{k-1}$ and $x_u-x_f=\pm\ell_{k+1}$ belong to $J_A$. Since
		\[
		x_dg=x_a(x_dx_v-x_ux_f)+x_ax_f(x_u-x_f)+x_f(x_ax_f-x_bx_e)+x_fx_e(x_b-x_c)+x_e(x_cx_f-x_dx_e),
		\]
		we get $x_dg\in J_A$. As in the proof of Proposition~\ref{prop:turning-relation-nonprime}, $J_A$ contains no monomial, so $x_d\notin J_A$. Moreover, with the notation of Lemma~\ref{lem:separation}, we have $\chi(g)=\chi(x_ax_v)-\chi(x_e^2)=\varepsilon\eta\neq0$, so $g\notin J_A$. Therefore $J_A$ is not prime.
	\end{proof}
	
To prove the converse, we associate with $J_A$ a sublattice
of $\mathbb{Z}^L$. To avoid any confusion with the distributive lattice $L$, we denote it by $\Lambda_A$. Let $\{\ee_w:w\in L\}$ be the canonical basis of $\ZZ^L$. 
	 For $i\in[m]$ we set
	\[
	\etab_i=\ee_{a_i}+\ee_{b_i}-\ee_{c_i}-\ee_{d_i},\qquad\rhob_i=\ee_{c_i}-\ee_{d_i},
	\]
	and, for $A\subseteq[m]$, we let $\Lambda_A$ be the subgroup of $\ZZ^L$ generated by $\{\etab_i:i\in[m]\}\cup\{\rhob_i:i\in A\}$.
	
	\begin{Lemma}\label{lem:integral-basis-LA}
		For every $A\subseteq[m]$, there exists a subset $F_A\subseteq L$ such that
		\[
		\cB_A=\{\etab_i:i\in[m]\}\cup\{\rhob_i:i\in A\}\cup\{\ee_w:w\in F_A\}
		\]
		is a $\ZZ$-basis of $\ZZ^L$. In particular, $\Lambda_A$ is saturated, $\{\etab_i:i\in[m]\}\cup\{\rhob_i:i\in A\}$ is a $\ZZ$-basis of $\Lambda_A$, and $\rank_\ZZ\Lambda_A=m+|A|$.
	\end{Lemma}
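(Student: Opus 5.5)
We construct $F_A$ explicitly by a triangularity argument with respect to a suitable ordering of $L$. The lattice has $2m+2$ elements and the family $\{\etab_i\}\cup\{\rhob_i:i\in A\}$ has $m+|A|$ vectors. So we must add $m+2-|A|$ unit vectors and check that the resulting square integer matrix has determinant $\pm1$.

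We order the elements rank by rank: $0_L$; then for each $i$ the pair $c_i,d_i$; finally $1_L$. We attach to each rank $i\in[m]$ the two coordinates $c_i,d_i$. The vector $\etab_i$ involves $c_i,d_i$ with coefficient $-1$ and involves $a_i,b_i$, which have ranks $i-1$ and $i+1$.

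\textbf{Choice of $F_A$.} For $i\notin A$, put $c_i$ into $F_A$. For $i\in A$, add nothing at rank $i$. Also put $0_L$ and $1_L$ into $F_A$. Then $|F_A|=(m-|A|)+2$, which is the right count.

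\textbf{Reduction to rank blocks.} Modulo the span of $\ee_{0_L},\ee_{1_L}$ and of $\{\ee_{c_i}:i\notin A\}$, consider the quotient lattice $\ZZ^L/\langle \ee_w:w\in F_A\rangle$. It is free with basis given by the images of the remaining coordinates. These are $d_i$ for $i\notin A$, and both $c_i,d_i$ for $i\in A$. We must show that the images of $\{\etab_i\}\cup\{\rhob_i:i\in A\}$ form a basis of this quotient.

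Order the coordinates by rank and write the matrix in blocks indexed by ranks. The image of $\etab_i$ has entries at ranks $i-1$, $i$, $i+1$ (through $a_i,b_i$, which are $0_L$, $1_L$, or some $c_j,d_j$ with $j=i\pm1$). So the matrix is not simply triangular, and this is the main obstacle. I would handle it by processing ranks from bottom to top and using an induction on $m$.

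\textbf{Induction step.} First observe that for $i\in A$ the pair $\{\etab_i,\rhob_i\}$ restricted to the rank-$i$ coordinates $(c_i,d_i)$ gives the matrix $\begin{pmatrix}-1&-1\\1&-1\end{pmatrix}$. Its determinant is $2$, not $\pm1$, so naive block triangularity fails.

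Instead I would modify the basis: replace $\rhob_i$ by $\rhob_i-\etab_i$ (or $\rhob_i+\etab_i$) as needed. Better, I would choose $F_A$ adaptively so that the rank-$i$ block becomes unimodular by absorbing one of $a_i,b_i$.

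Concretely, for $i\in A$ consider $\etab_i+\rhob_i=\ee_{a_i}+\ee_{b_i}-2\ee_{d_i}$ and $\rhob_i$. Over $\ZZ$, the span of $\etab_i,\rhob_i$ together with $\ee_{c_i}$ (if we add it) contains $\ee_{d_i}$ and $\ee_{a_i}+\ee_{b_i}$. So the correct choice is: at rank $i\in A$ we still add $\ee_{c_i}$, and we compensate by omitting a unit vector elsewhere. The vector $\ee_{a_i}+\ee_{b_i}$ lets us remove one of $\ee_{0_L}$, $\ee_{1_L}$, or a higher-rank unit vector.

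I would therefore argue by induction on $m$, peeling off the top diamond $D_m$. Let $L'=L\setminus\{1_L,\text{the element of rank } m \text{ not equal to } a_m\}$, which is thin of rank $m$ with top $a_m$. From a basis $\cB'_{A\cap[m-1]}$ of $\ZZ^{L'}$, we extend by the two new coordinates. If $m\notin A$, we use $\etab_m$ (unitriangular in the coordinate $1_L$) and the unit vector of the other new element. If $m\in A$, we use $\etab_m$ and $\rhob_m$, whose entries in the two new coordinates (the new rank-$m$ element $w$ and $1_L$) form the matrix $\begin{pmatrix}\mp1&1\\\pm1&0\end{pmatrix}$, which is unimodular. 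In each case the new block is unimodular modulo old coordinates, so the determinant stays $\pm1$.

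\textbf{Consequences.} Saturation of $\Lambda_A$ and the rank formula then follow immediately, because the generators are part of a $\ZZ$-basis of $\ZZ^L$.
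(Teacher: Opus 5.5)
Your final argument is correct and is essentially the paper's proof. You peel off the top diamond $D_m$, note that every generator attached to $D_1,\ldots,D_{m-1}$ vanishes on the two new coordinates $1_L$ and $q$ (the rank-$m$ element other than $b_{m-1}$), and check that $\{\etab_m,\rhob_m\}$, respectively $\{\etab_m,\ee_q\}$, gives a unimodular $2\times2$ block there; the paper unrolls this same induction into the decomposition $L=D_1\sqcup\Delta_2\sqcup\cdots\sqcup\Delta_m$ and a single block upper triangular matrix. Discard the abandoned first attempt, and fix two small points: $L'=[0_L,b_{m-1}]$ has top $b_{m-1}$, not $a_m$ (which has rank $m-1$), and the trivial base case $m=1$ must be stated.
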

	
	\begin{proof}
		For $2\leq i\leq m$ we set $\Delta_i=D_i\setminus(D_1\cup\cdots\cup D_{i-1})$. The elements of $D_j$ have rank at most $j+1$, so the elements of $D_1\cup\cdots\cup D_{i-1}$ have rank at most $i$, and the only one of rank $i$ is $b_{i-1}$. Since $D_i=\{a_i,c_i,d_i,b_i\}$, with $a_i\in\{c_{i-1},d_{i-1}\}\subseteq D_{i-1}$, $b_{i-1}\in\{c_i,d_i\}$ and $\rank_L(b_i)=i+1$, we get $\Delta_i=\{p_i,q_i\}$, where $p_i=b_i$ and $q_i$ is the element of $\{c_i,d_i\}$ different from $b_{i-1}$. We also set $p_1=a_1$ and $q_1=c_1$. Since every element of $L$ belongs to some elementary diamond, we have the decomposition
		$
		L=D_1\sqcup\Delta_2\sqcup\cdots\sqcup\Delta_m.
		$
		Define
		\[
		P_A=\{p_i:i\in[m]\}\cup\{q_i:i\in A\},\qquad F_A=\{b_1,d_1\}\cup\{q_i:i\in[m]\setminus A\}.
		\]
		Since $D_1=\{p_1,q_1,b_1,d_1\}$, the above decomposition gives $L=P_A\sqcup F_A$, with $|P_A|=m+|A|$ and $|F_A|=m+2-|A|$.
		
		For every $i\in[m]$, the element $p_i$ is the least or the greatest element of $D_i$, while $q_i\in\{c_i,d_i\}$. Hence $\etab_i(p_i)=1$, $\etab_i(q_i)=-1$, $\rhob_i(p_i)=0$ and $\rhob_i(q_i)=\kappa_i$, where $\kappa_i=1$ if $q_i=c_i$ and $\kappa_i=-1$ if $q_i=d_i$. Moreover, if $j>i$, then $p_j,q_j\in\Delta_j$ do not belong to $D_i$, and so $\etab_i(p_j)=\etab_i(q_j)=\rhob_i(p_j)=\rhob_i(q_j)=0$.
		
		Order the generators of $\Lambda_A$ by listing $\etab_i$ and then, if $i\in A$, $\rhob_i$, for $i=1,\ldots,m$, and order the elements of $P_A$ in the same way, by listing $p_i$ and then, if $i\in A$, $q_i$, for $i=1,\ldots,m$. Let $M_A$ be the $(m+|A|)\times(m+|A|)$ matrix whose columns are the generators of $\Lambda_A$ restricted to the coordinates indexed by $P_A$, with respect to these orders. By the previous paragraph, $M_A$ is block upper triangular, and its diagonal block corresponding to the index $i$ is the $1\times1$ matrix $(\etab_i(p_i))=(1)$ if $i\notin A$, and the matrix
		\[
		\begin{pmatrix}
			\etab_i(p_i)&\rhob_i(p_i)\\
			\etab_i(q_i)&\rhob_i(q_i)
		\end{pmatrix}
		=
		\begin{pmatrix}
			1&0\\
			-1&\kappa_i
		\end{pmatrix}
		\]
		if $i\in A$. Hence $\det M_A=\prod_{i\in A}\kappa_i\in\{1,-1\}$.
		
		Now fix an ordering $F_A=\{w_1,\ldots,w_{m+2-|A|}\}$, and consider the square matrix whose columns are the elements of $\cB_A$, listing first the generators of $\Lambda_A$ as above and then $\ee_{w_1},\ldots,\ee_{w_{m+2-|A|}}$, and whose rows are indexed by the elements of $P_A$, as above, followed by $w_1,\ldots,w_{m+2-|A|}$. This matrix has the form
		\[
		\begin{pmatrix}
			M_A&0\\
			N_A&I
		\end{pmatrix},
		\]
		where $N_A$ is an integer matrix and $I$ is the identity matrix, so its determinant is $\det M_A\in\{1,-1\}$. Therefore $\cB_A$ is a $\ZZ$-basis of $\ZZ^L$, and $\ZZ^L=\Lambda_A\oplus\bigoplus_{w\in F_A}\ZZ\ee_w$. Thus $\Lambda_A$ is a direct summand of $\ZZ^L$, hence it is saturated, and the remaining assertions follow.
	\end{proof}
	
	For a subgroup $\Lambda\subseteq\ZZ^L$, the \emph{lattice ideal} of $\Lambda$ is
	\[
	I_\Lambda=(\xx^{\zz^+}-\xx^{\zz^-}:\zz\in\Lambda)\subseteq S_L,
	\]
	where $\zz^+,\zz^-\in\NN^L$ are the positive and the negative part of $\zz$, so that $\zz=\zz^+-\zz^-$. Recall that, for $\uu,\vv\in\NN^L$, the binomial $\xx^\uu-\xx^\vv$ belongs to $I_\Lambda$ if and only if $\uu-\vv\in\Lambda$, and that $I_\Lambda$ is a prime ideal whenever $\Lambda$ is saturated; see, for instance, \cite{HHO}. Since all the generators of $\Lambda_A$ have coordinate sum zero, $I_{\Lambda_A}$ is a homogeneous ideal and the two monomials of every binomial of $I_{\Lambda_A}$ have the same degree.
	
	For an ideal $Q\subseteq S_L$ and $f\in S_L$, we set $Q:f^\infty=\bigcup_{n\geq0}(Q:f^n)$, where $Q:f^n=\{g\in S_L:f^ng\in Q\}$. Moreover, we set $X_L=\prod_{w\in L}x_w$.
	
	\begin{Proposition}\label{prop:saturation-JA}
		Let $L$ be a thin distributive lattice of rank $m+1$, and let $A\subseteq[m]$. Then $J_A\subseteq I_{\Lambda_A}$ and
		\[
		I_{\Lambda_A}=J_A:X_L^\infty.
		\]
		In particular, there exists an integer $N\geq0$ such that $I_{\Lambda_A}=J_A:X_L^N$.
	\end{Proposition}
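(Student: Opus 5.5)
We prove the two inclusions $J_A\subseteq I_{\Lambda_A}$ and $I_{\Lambda_A}\subseteq J_A:X_L^\infty$, and then the reverse inclusion $J_A:X_L^\infty\subseteq I_{\Lambda_A}$.

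\emph{The inclusion $J_A\subseteq I_{\Lambda_A}$.} It suffices to check the generators of $J_A$.

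For $i\in A$, the exponent difference of $\ell_i=x_{c_i}-x_{d_i}$ is $\rhob_i\in\Lambda_A$.

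For incomparable $\alpha,\beta$, we must show $\ee_\alpha+\ee_\beta-\ee_{\alpha\wedge\beta}-\ee_{\alpha\vee\beta}\in\Lambda_A$. By Discussion~\ref{lem:structure}, $\alpha,\beta$ lie in a common ladder and are of the form $\mu_i,\lambda_j$ with $i<j$, with meet $\lambda_i$ and join $\mu_j$. The vector $\ee_{\mu_i}+\ee_{\lambda_j}-\ee_{\lambda_i}-\ee_{\mu_j}$ is a telescoping sum. Indeed, for each $t$ with $i<t\le j$, the diamond $[\lambda_{t-1},\mu_t]$ is an elementary diamond $D_{s+t-1}$, and it contributes $-\etab_{s+t-1}=\ee_{\mu_{t-1}}+\ee_{\lambda_t}-\ee_{\lambda_{t-1}}-\ee_{\mu_t}$. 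Summing over $t=i+1,\dots,j$, the interior terms cancel and we obtain exactly the required vector. Hence every join-meet binomial lies in $I_{\Lambda_A}$.

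\emph{The inclusion $I_{\Lambda_A}\subseteq J_A:X_L^\infty$.} This is the standard argument for lattice ideals. Let $T$ be the set of $\zz\in\ZZ^L$ for which there exists $n\geq0$ with $X_L^n(\xx^{\zz^+}-\xx^{\zz^-})\in J_A$. Equivalently, $T$ consists of those $\zz$ such that $\xx^{\uu}-\xx^{\vv}\in J_A:X_L^\infty$ for some, equivalently all, $\uu,\vv$ with $\uu-\vv=\zz$. Passing between different representatives costs only a monomial multiple, which the saturation absorbs.

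We show that $T$ is a subgroup of $\ZZ^L$.
\begin{itemize}
\item It is closed under negation, since the binomial just changes sign.
\item It is closed under addition: if $\zz_1=\uu_1-\vv_1$ and $\zz_2=\uu_2-\vv_2$, then
\[
\xx^{\uu_1+\uu_2}-\xx^{\vv_1+\vv_2}=\xx^{\uu_2}(\xx^{\uu_1}-\xx^{\vv_1})+\xx^{\vv_1}(\xx^{\uu_2}-\xx^{\vv_2}).
\]
Multiplying by a suitable power of $X_L$ then brings the right-hand side into $J_A$.
\end{itemize}
The generators of $\Lambda_A$ lie in $T$, since they are exponent differences of generators of $J_A$. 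Hence $\Lambda_A\subseteq T$. Since $I_{\Lambda_A}$ is generated by binomials $\xx^{\zz^+}-\xx^{\zz^-}$ with $\zz\in\Lambda_A$, we get $I_{\Lambda_A}\subseteq J_A:X_L^\infty$.

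\emph{The inclusion $J_A:X_L^\infty\subseteq I_{\Lambda_A}$.} By Lemma~\ref{lem:integral-basis-LA}, $\Lambda_A$ is saturated, so $I_{\Lambda_A}$ is prime. It contains no variable, because it contains no monomial: applying the map $x_w\mapsto1$ to the binomial generators gives $0$, so every element of $I_{\Lambda_A}$ vanishes at that point. Hence $X_L\notin I_{\Lambda_A}$.

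Now suppose $X_L^ng\in J_A\subseteq I_{\Lambda_A}$. Since $I_{\Lambda_A}$ is prime and $X_L\notin I_{\Lambda_A}$, it follows that $g\in I_{\Lambda_A}$. This proves the equality $I_{\Lambda_A}=J_A:X_L^\infty$.

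\emph{The final assertion.} The ascending chain $J_A:X_L^n$, $n\geq0$, stabilizes by Noetherianity. Its stable value is $J_A:X_L^\infty=I_{\Lambda_A}$, which gives an $N$ with $I_{\Lambda_A}=J_A:X_L^N$.
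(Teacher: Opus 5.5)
Your proof is correct. The inclusion $J_A\subseteq I_{\Lambda_A}$ is the same telescoping computation the paper uses, but you obtain the saturation statement by a different route.

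The paper introduces the lattice basis ideal $I_A^{\mathrm{bas}}$, generated only by the diamond binomials $x_{a_i}x_{b_i}-x_{c_i}x_{d_i}$ and the $\ell_i$ with $i\in A$. It uses Lemma~\ref{lem:integral-basis-LA} to see that their exponent vectors form a $\ZZ$-basis of $\Lambda_A$, and cites Ho\c{s}ten--Shapiro for $I_{\Lambda_A}=I_A^{\mathrm{bas}}:X_L^\infty$. It then concludes by the sandwich $I_{\Lambda_A}=I_A^{\mathrm{bas}}:X_L^\infty\subseteq J_A:X_L^\infty\subseteq I_{\Lambda_A}:X_L^\infty=I_{\Lambda_A}$, where the last equality is just idempotence of saturation.

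Your subgroup argument for $I_{\Lambda_A}\subseteq J_A:X_L^\infty$ is in effect a proof of the cited fact, run directly on the generators of $J_A$. It makes the proposition self-contained, and it only uses that the $\etab_i$ and $\rhob_i$ generate $\Lambda_A$, not that they form a basis.

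For the reverse inclusion you invoke primality of $I_{\Lambda_A}$, that is, the saturatedness part of Lemma~\ref{lem:integral-basis-LA}. The paper's corresponding step does not need primality, since $I_\Lambda:X_L^\infty=I_\Lambda$ holds for the lattice ideal of any subgroup $\Lambda\subseteq\ZZ^L$. The paper's version is shorter because it relies on the citation, while yours avoids it. The final assertion follows from Noetherianity in both.
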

	
	\begin{proof}
		Consider the ideal
		\[
		I_A^{\mathrm{bas}}=(x_{a_i}x_{b_i}-x_{c_i}x_{d_i}:i\in[m])+(\ell_i:i\in A).
		\]
		The exponent vectors of its generators are, up to sign, the elements $\etab_i$, $i\in[m]$, and $\rhob_i$, $i\in A$, which form a $\ZZ$-basis of the saturated lattice $\Lambda_A$ by Lemma~\ref{lem:integral-basis-LA}. Thus $I_A^{\mathrm{bas}}$ is a lattice basis ideal of $\Lambda_A$, and \cite[Proposition~1.1]{HostenShapiro} gives $I_{\Lambda_A}=I_A^{\mathrm{bas}}:X_L^\infty$.
		
		Since $x_{c_i}x_{d_i}-x_{a_i}x_{b_i}$ is a join-meet binomial of $L$ for every $i\in[m]$, we have $I_A^{\mathrm{bas}}\subseteq J_A$. We also have $J_A\subseteq I_{\Lambda_A}$. Indeed, the exponent vector of $\ell_i$ is $\rhob_i\in\Lambda_A$. Let $\alpha,\beta\in L$ be incomparable. By the last paragraph of Discussion~\ref{lem:structure}, $\alpha$ and $\beta$ lie in a ladder $[a_s,b_t]$ and, after possibly exchanging them, we have $\alpha=\mu_i$ and $\beta=\lambda_j$ for some $0\leq i<j\leq N$, so that $\alpha\wedge\beta=\lambda_i$ and $\alpha\vee\beta=\mu_j$; see Figure~\ref{fig:ladder}. In other words, the join-meet binomial $x_\alpha x_\beta-x_{\alpha\wedge\beta}x_{\alpha\vee\beta}$ corresponds to the rectangle formed by the consecutive squares $D_{s+i},\ldots,D_{s+j-1}$ of the ladder, and we show that its exponent vector is the sum of the vectors $\etab$ of these squares. Indeed, for every $y\in[N]$ the elementary diamond $D_{s+y-1}=\{\lambda_{y-1},\mu_{y-1},\lambda_y,\mu_y\}$ has least element $\lambda_{y-1}$ and greatest element $\mu_y$, so that $\etab_{s+y-1}=\ee_{\lambda_{y-1}}+\ee_{\mu_y}-\ee_{\mu_{y-1}}-\ee_{\lambda_y}$. Summing over $i<y\leq j$, the intermediate terms cancel in pairs, and we obtain
		\[
		\ee_{\alpha\wedge\beta}+\ee_{\alpha\vee\beta}-\ee_\alpha-\ee_\beta=\ee_{\lambda_i}+\ee_{\mu_j}-\ee_{\mu_i}-\ee_{\lambda_j}=\sum_{y=i+1}^{j}\etab_{s+y-1}\in\Lambda_A.
		\]
		
		Saturating with respect to $X_L$, we obtain
		\[
		I_{\Lambda_A}=I_A^{\mathrm{bas}}:X_L^\infty\subseteq J_A:X_L^\infty\subseteq I_{\Lambda_A}:X_L^\infty=I_{\Lambda_A},
		\]
		where the last equality holds because $I_{\Lambda_A}:X_L^\infty=(I_A^{\mathrm{bas}}:X_L^\infty):X_L^\infty=I_A^{\mathrm{bas}}:X_L^\infty$. Hence $I_{\Lambda_A}=J_A:X_L^\infty$. Finally, since $S_L$ is Noetherian, the ascending chain $J_A:X_L\subseteq J_A:X_L^2\subseteq\cdots$ stabilizes, and so $J_A:X_L^\infty=J_A:X_L^N$ for some $N\geq0$.
	\end{proof}
	
	\begin{Theorem}\label{thm:prime-iff-lattice-ideal}
		Let $L$ be a thin distributive lattice of rank $m+1$, and let $A\subseteq[m]$. Then the following conditions are equivalent:
		\begin{enumerate}[(i)]
			\item $J_A$ is prime;
			\item $J_A=I_{\Lambda_A}$;
			\item $J_A:X_L^\infty=J_A$.
		\end{enumerate}
	\end{Theorem}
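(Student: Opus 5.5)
We prove the cyclic chain (ii)$\Rightarrow$(i)$\Rightarrow$(iii)$\Rightarrow$(ii). Everything needed is already in Lemma~\ref{lem:integral-basis-LA} and Proposition~\ref{prop:saturation-JA}.

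\emph{(ii)$\Rightarrow$(i).} By Lemma~\ref{lem:integral-basis-LA}, $\Lambda_A$ is saturated. A lattice ideal of a saturated lattice is prime, as recalled before Proposition~\ref{prop:saturation-JA}. So $J_A=I_{\Lambda_A}$ is prime.

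\emph{(i)$\Rightarrow$(iii).} The inclusion $J_A\subseteq J_A:X_L^\infty$ always holds. For the reverse, let $g\in J_A:X_L^\infty$, so that $X_L^ng\in J_A$ for some $n$. Suppose $J_A$ is prime. In the proof of Proposition~\ref{prop:turning-relation-nonprime} we used the homomorphism $\epsilon$ sending every variable to $1$ to see that $J_A$ contains no monomial. Hence $X_L\notin J_A$, and therefore $X_L^n\notin J_A$ as well. Primality then forces $g\in J_A$. Thus $J_A:X_L^\infty=J_A$.

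\emph{(iii)$\Rightarrow$(ii).} Proposition~\ref{prop:saturation-JA} gives $I_{\Lambda_A}=J_A:X_L^\infty$. Under (iii) this equals $J_A$.

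None of these steps is really an obstacle. The only point needing care is in (i)$\Rightarrow$(iii): we must check that no power of $X_L$ lies in $J_A$. This holds because every generator of $J_A$ is a binomial, so the evaluation $\epsilon$ kills $J_A$ while sending every monomial to $1$.
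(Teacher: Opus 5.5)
Your proof is correct and follows essentially the same route as the paper. It uses the same ingredients: saturation of $\Lambda_A$ for (ii)$\Rightarrow$(i), the absence of monomials in $J_A$ for (i)$\Rightarrow$(iii), and Proposition~\ref{prop:saturation-JA} to link (ii) and (iii). The only difference is that you arrange them as a cycle, whereas the paper states (ii)$\Leftrightarrow$(iii) directly.
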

	
	\begin{proof}
		(ii)$\Leftrightarrow$(iii) This follows from Proposition~\ref{prop:saturation-JA}.
		
		(ii)$\Rightarrow$(i) By Lemma~\ref{lem:integral-basis-LA}, $\Lambda_A$ is saturated, so $I_{\Lambda_A}$ is prime.
		
		(i)$\Rightarrow$(iii) As in the proof of Proposition~\ref{prop:turning-relation-nonprime}, $J_A$ contains no monomial, so $X_L\notin J_A$. Since $J_A$ is prime, we get $J_A:X_L^n=J_A$ for every $n\geq0$, and hence $J_A:X_L^\infty=J_A$.
	\end{proof}
	
	\section{Proof of the sufficiency}\label{sec:sufficiency}
	
	Throughout this section, $L$ is a thin distributive lattice of rank $m+1$ and $A\subseteq[m]$. By Propositions~\ref{prop:turning-relation-nonprime} and~\ref{prop:neighboring-relations-nonprime}, if $J_A$ is prime, then $A$ is admissible. In this section we prove the converse and, in view of Theorem~\ref{thm:prime-iff-lattice-ideal}, we have to show that $J_A=I_{\Lambda_A}$ whenever $A$ is admissible.
	
	A binomial $f\in I_{\Lambda_A}$ is called \emph{redundant} if it belongs to the ideal generated by the binomials of $I_{\Lambda_A}$ of degree strictly smaller than $\deg(f)$, and \emph{irredundant} otherwise. The key result of this section is the following.
	
	\begin{Proposition}\label{prop:main}
		Let $L$ be a thin distributive lattice of rank $m+1$, and assume that $A\subseteq[m]$ is admissible. Then the following statements hold:
		\begin{enumerate}[(i)]
			\item every binomial of degree one in $I_{\Lambda_A}$ belongs to $(\ell_i:i\in A)$, and every binomial of degree two in $I_{\Lambda_A}$ belongs to $J_A$;
			\item every irredundant binomial in $I_{\Lambda_A}$ has degree at most two.
		\end{enumerate}
	\end{Proposition}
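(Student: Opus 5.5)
For part (i), I would first describe the degree-one and degree-two elements of $\Lambda_A$ explicitly. The main tool is a family of linear functionals $\omega:\ZZ^L\to\ZZ$ vanishing on $\Lambda_A$; these are the $A$-weights. Such an $\omega$ must satisfy $\omega(\etab_i)=0$ for all $i$ and $\omega(\rhob_i)=0$ for $i\in A$. Because of the lattice structure, such functionals come from the Hibi parametrization: one assigns a weight $t_p$ to each join-irreducible $p\in P_L$, sets $\omega(\ee_w)=\sum_{p\in\ideal w}t_p$, and adds a constant. The condition $\omega(\rhob_i)=0$ imposes linear relations among the $t_p$.

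By Lemma~\ref{lem:integral-basis-LA}, the space of $A$-weights has dimension $m+2-|A|$. I would produce an explicit spanning set, namely rank-type weights and ``coordinate'' weights of each ladder. Admissibility enters here: at a turning rank $k$, condition (i) keeps $\omega(\ee_d)\neq\omega(\ee_e)$ available, and condition (ii) ensures that at least one of the two neighbouring identifications is absent.

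A degree-one binomial $x_w-x_{w'}$ lies in $I_{\Lambda_A}$ exactly when every $A$-weight agrees on $w$ and $w'$. Using the rank weight, $w$ and $w'$ have equal rank $i$. Using the remaining weights, one then shows that $i\in A$, so the binomial is $\pm\ell_i$.

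For degree two, I would use the same weights to classify all pairs $\{w_1,w_2\},\{w_3,w_4\}$ with $\ee_{w_1}+\ee_{w_2}-\ee_{w_3}-\ee_{w_4}\in\Lambda_A$. Replacing each variable $x_{c_i}$ with $i\in A$ by $x_{d_i}$ modulo $(\ell_i)$, I would reduce to the case where each side is either a standard monomial or is related to one by a join-meet binomial. Modulo $I_L$, every quadratic monomial is congruent to a standard monomial $x_\alpha x_\beta$ with $\alpha\le\beta$. It then suffices to show that two standard monomials with equal weights coincide after the identifications. This is a finite check on the ranks and ladder positions of $\alpha$ and $\beta$, and admissibility again rules out exactly the bad pairs exhibited in Propositions~\ref{prop:turning-relation-nonprime} and~\ref{prop:neighboring-relations-nonprime}.

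For part (ii), I would argue by induction on $m$. Take an irredundant binomial $f=\xx^\uu-\xx^\vv$ in $I_{\Lambda_A}$ of degree $r\ge3$. Since $I_L$ is generated in degree two, I may replace $\xx^\uu$ and $\xx^\vv$ by standard monomials modulo $I_L$, giving chains $\alpha_1\le\cdots\le\alpha_r$ and $\beta_1\le\cdots\le\beta_r$. I may also replace $x_{c_i}$ by $x_{d_i}$ for $i\in A$; since degree-one elements are redundant, this changes $f$ only modulo lower-degree binomials. If the two monomials share a variable, $f$ is a multiple of a lower-degree binomial of $I_{\Lambda_A}$, because $I_{\Lambda_A}$ is prime, hence saturated with respect to each variable. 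Therefore I may assume they are coprime. Evaluating suitable $A$-weights, such as the minimum rank appearing, then forces a contradiction unless something nontrivial happens near the bottom or the top.

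Concretely, I would compare the minima $\alpha_1,\beta_1$. If neither involves $0_L$ or $D_1$, I would restrict to the sublattice $[a_2,1_L]$, a thin lattice of rank $m$ with admissible set $A\cap\{2,\dots,m\}$ shifted, and apply the induction hypothesis. Otherwise I would perform a degree-two exchange with a quadratic binomial of $I_{\Lambda_A}$. This makes $\alpha_1=\beta_1$ modulo redundant terms, so that $f$ becomes a variable times a binomial of degree $r-1$.

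\textbf{Main obstacle.} The hardest step is this exchange. I need to show that some quadratic move of the form $x_{\alpha_1}x_{\alpha_j}\mapsto x_{\beta_1}x_{\gamma}$ lies in $I_{\Lambda_A}$. For this, the weight comparison must produce an element $\gamma$ with the required balancing, especially when $\alpha_1$ and $\beta_1$ sit on opposite sides of a turning rank. I would attack it by splitting into cases by the ladder containing $\alpha_1,\beta_1$, using the coordinate weights of that ladder.
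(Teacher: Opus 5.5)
Your skeleton is sound: an induction on $m$ that peels off an extreme diamond is, up to reversing the order, what the paper does with $L'=[0_L,b_{m-1}]$. Your degree-one argument can also be completed: for $j\notin A$ one has $\rhob_j\notin\Lambda_A$, since $\rank_\ZZ\Lambda_{A\cup\{j\}}=\rank_\ZZ\Lambda_A+1$ by Lemma~\ref{lem:integral-basis-LA}, and the rank function is an $A$-weight separating different ranks.

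The degree-two part, however, rests on a reduction that fails. Already for $m=1$ and $A=\{1\}$, the monomials $x_{0_L}x_{1_L}$ and $x_{d_1}^2$ are distinct standard monomials not involving $x_{c_1}$. Yet $\ee_{0_L}+\ee_{1_L}-2\ee_{d_1}=\etab_1+\rhob_1\in\Lambda_A$, and the binomial lies in $J_A$ only through the non-standard monomial $x_{c_1}x_{d_1}$. So standard monomials with equal $A$-weights need not coincide after the identifications, and there is no normal form modulo $J_A$ to compare. Your ``finite check'' is therefore the whole degree-two statement: any two quadratic monomials with equal $A$-weights are connected by join-meet and linear moves, for every $m$ and for arbitrarily long runs of diamonds in $\cA$.

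That statement genuinely uses both admissibility conditions. Otherwise $x_ax_v-x_bx_u$ or $x_ax_v-x_e^2$ lies in $I_{\Lambda_A}\setminus J_A$, by the proofs of Propositions~\ref{prop:turning-relation-nonprime} and~\ref{prop:neighboring-relations-nonprime}. Admissibility, on the other hand, is invisible in the space of $A$-weights: these cut out $I_{\Lambda_A}$ for every $A$, because $\Lambda_A$ is always saturated. So admissibility can only enter through the moves, and that is exactly where you give no argument.

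For part (ii), the step you call the main obstacle is the entire proof, and ``splitting into cases by the ladder'' does not yet say how $\gamma$ is found. In the notation of Setup~\ref{setup}, the paper needs two ingredients.

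First, an $A$-weight forcing the monomial not divisible by $x_{1_L}$ to contain a variable indexed by the block $F$ of the maximal run of diamonds of $\cA$ ending at $D_m$. The indicator $\delta_F$ works unless that run starts right after the last turning rank $r$, where $\delta_F$ fails on $D_{r-1}$. In that case one must build a different weight $\psi$ reaching into the previous ladder, and $\psi$ is an $A$-weight only because condition (ii) gives $r-1\notin A$. This is the only place condition (ii) is used, and your plan never locates it.

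Second, a transport lemma. Using the weight $\omega$ that counts consecutive diamonds of $\cA$, one alternates linear moves $x_{\gamma_{j-1}}\leftrightarrow x_{\alpha_j}$ with join-meet moves $x_{\gamma_k}x_{\alpha_j}\mapsto x_{\alpha_k}x_{\gamma_j}$. Each stage raises the $\omega$-value of the highest $F$-variable by one, until $x_{1_L}$ divides both monomials. The number of moves grows with the length of the run, so this is not a single quadratic exchange.

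The same argument in degree $2$ is what proves the degree-two half of (i). This is why the paper proves (i) and (ii) simultaneously, by induction on the degree through the notion of good binomials, instead of settling degree two first.
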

	
	The main tool to control the binomials of $I_{\Lambda_A}$ is given by the following lemma.
	
	\begin{Lemma}\label{lem:weights}
		Let $\theta:L\to\ZZ$ be a function such that $\theta(a_i)+\theta(b_i)=\theta(c_i)+\theta(d_i)$ for every $i\in[m]$, and $\theta(c_i)=\theta(d_i)$ for every $i\in A$. For $\uu\in\NN^L$, set $\theta(\uu)=\sum_{w\in L}\uu(w)\theta(w)$. If $\xx^\uu-\xx^\vv\in I_{\Lambda_A}$, then $\theta(\uu)=\theta(\vv)$.
	\end{Lemma}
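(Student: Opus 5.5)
The plan is to read $\theta$ as a linear functional on $\ZZ^L$ and check that it vanishes on $\Lambda_A$. Extend $\theta$ $\ZZ$-linearly to $\ZZ^L$ by setting
\[
\theta(\zz)=\sum_{w\in L}\zz(w)\theta(w)
\]
for $\zz\in\ZZ^L$. This agrees with the given definition on $\NN^L$, and it is a group homomorphism $\ZZ^L\to\ZZ$.

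The key step is to evaluate $\theta$ on the generators of $\Lambda_A$. First, $\theta(\etab_i)=\theta(a_i)+\theta(b_i)-\theta(c_i)-\theta(d_i)$, which is $0$ for every $i\in[m]$ by the first hypothesis. Second, $\theta(\rhob_i)=\theta(c_i)-\theta(d_i)$, which is $0$ for every $i\in A$ by the second hypothesis. Since $\Lambda_A$ is generated by these vectors and $\theta$ is a homomorphism, $\theta$ vanishes on all of $\Lambda_A$.

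Now let $\xx^\uu-\xx^\vv\in I_{\Lambda_A}$. By the characterization of binomials in lattice ideals recalled before Proposition~\ref{prop:saturation-JA}, this means $\uu-\vv\in\Lambda_A$. Hence
\[
\theta(\uu)-\theta(\vv)=\theta(\uu-\vv)=0,
\]
which is the claim.

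There is no real obstacle here. The only point needing care is the membership criterion "$\xx^\uu-\xx^\vv\in I_\Lambda$ if and only if $\uu-\vv\in\Lambda$". This holds for any subgroup $\Lambda$, and we use it in the direction from membership to $\uu-\vv\in\Lambda$. If a self-contained argument is wanted, there is a direct route. Set $\Theta=\theta+C$ with $C$ a constant large enough that $\Theta(w)>0$ for all $w$. Since every generator of $\Lambda_A$ has coordinate sum zero, $\Theta$ also satisfies both hypotheses. Consider the map $S_L\to K[t,t^{-1}]$ sending $x_w\mapsto t^{\Theta(w)}$. Each generator $\xx^{\zz^+}-\xx^{\zz^-}$ of $I_{\Lambda_A}$ maps to zero, since $\Theta(\zz^+)=\Theta(\zz^-)$. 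So $I_{\Lambda_A}$ lies in the kernel. Because $t^{\Theta(\uu)}-t^{\Theta(\vv)}=0$ forces $\Theta(\uu)=\Theta(\vv)$, we get $\Theta(\uu)=\Theta(\vv)$. Homogeneity of $I_{\Lambda_A}$ gives $|\uu|=|\vv|$, and subtracting $C|\uu|$ yields $\theta(\uu)=\theta(\vv)$.
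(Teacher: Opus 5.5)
Your proof is correct and is the paper's argument: extend $\theta$ to a homomorphism $\ZZ^L\to\ZZ$, check that it vanishes on every $\etab_i$ and on every $\rhob_i$ with $i\in A$, and conclude from $\uu-\vv\in\Lambda_A$. Your optional self-contained variant is also valid, and the shift by $C$ is unnecessary there, since $x_w\mapsto t^{\theta(w)}$ into $K[t,t^{-1}]$ already gives $\theta(\uu)=\theta(\vv)$ directly, without using $|\uu|=|\vv|$.
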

	
	\begin{proof}
		Let $\Theta:\ZZ^L\to\ZZ$ be the homomorphism of abelian groups defined on the canonical basis by $\Theta(\ee_w)=\theta(w)$ for every $w\in L$, so that $\Theta(\uu)=\theta(\uu)$ for every $\uu\in\NN^L$. For every $i\in[m]$, the first assumption on $\theta$ gives $\Theta(\etab_i)=\theta(a_i)+\theta(b_i)-\theta(c_i)-\theta(d_i)=0$, and for every $i\in A$ the second one gives $\Theta(\rhob_i)=\theta(c_i)-\theta(d_i)=0$. Since $\Lambda_A$ is generated by these elements, we get $\Lambda_A\subseteq\ker\Theta$. Now, if $\xx^\uu-\xx^\vv\in I_{\Lambda_A}$, then $\uu-\vv\in\Lambda_A$, and therefore $\theta(\uu)-\theta(\vv)=\Theta(\uu-\vv)=0$.
	\end{proof}
	
	A function $\theta$ as in Lemma~\ref{lem:weights} is called an \emph{$A$-weight}. In other words, $\theta$ is an $A$-weight if, on every elementary diamond, the sum of its values at the least and at the greatest element equals the sum of its values at the two elements of the middle rank, and, for every $i\in A$, these two values coincide.
	
	Let $f$ be a binomial of $I_{\Lambda_A}$ of degree $d$. We say that $f$ is \emph{good} if one of the following holds:
	\begin{itemize}
		\item $d=1$ and $f\in(\ell_i:i\in A)$;
		\item $d=2$ and $f\in J_A$;
		\item $d\geq3$ and $f$ is redundant.
	\end{itemize}
	We also agree that $0$ is good. Clearly, Proposition~\ref{prop:main} states that every binomial of $I_{\Lambda_A}$ is good. The next lemma is the analogue of \cite[Lemma~2.2]{CistoNavarra}.
	
	\begin{Lemma}\label{lem:moves}
		Let $f=\xx^\uu-\xx^\vv\in I_{\Lambda_A}$ be a binomial of degree $d$.
		\begin{enumerate}[(a)]
			\item Let $g=\xx^{\mathbf r}-\xx^{\mathbf s}$, where either $g=\pm\ell_i$ for some $i\in A$, or $d\geq2$ and $g$ is, up to sign, a join-meet binomial of $L$. If $\xx^{\mathbf r}$ divides $\xx^\uu$ and $\uu'=\uu-\mathbf r+\mathbf s$, then $f'=\xx^{\uu'}-\xx^\vv$ is either zero or a binomial of $I_{\Lambda_A}$ of degree $d$, and $f$ is good if and only if $f'$ is good. The same holds if the roles of $\xx^\uu$ and $\xx^\vv$ are exchanged. We say that $f'$ is obtained from $f$ by a \emph{move}.
			\item If some variable $x_w$ divides both $\xx^\uu$ and $\xx^\vv$, then $f=x_wf_1$, where $f_1$ is a binomial of $I_{\Lambda_A}$ of degree $d-1$. If $d\geq3$, or $d=2$ and $f_1$ is good, then $f$ is good.
		\end{enumerate}
	\end{Lemma}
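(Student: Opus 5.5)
The plan is to treat (a) and (b) separately, since both are bookkeeping statements about the definition of ``good''. We use Lemma~\ref{lem:integral-basis-LA} and Proposition~\ref{prop:saturation-JA} only to know that $J_A\subseteq I_{\Lambda_A}$ and that $I_{\Lambda_A}$ is homogeneous.

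For (a), the first step is the identity $f-f'=\xx^\uu-\xx^{\uu'}=\xx^{\uu-\mathbf r}(\xx^{\mathbf r}-\xx^{\mathbf s})=\xx^{\uu-\mathbf r}g$. Since $g\in J_A\subseteq I_{\Lambda_A}$, we get $f'\in I_{\Lambda_A}$. Moreover $\deg\xx^{\mathbf s}=\deg\xx^{\mathbf r}$, because $\ell_i$ is linear and the join-meet binomials are quadratic. Hence $\uu'\in\NN^L$ has degree $d$, and $f'$ is either $0$ or a binomial of degree $d$.

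Next we compare goodness in each degree; since $f'=f-\xx^{\uu-\mathbf r}g$ and $f=f'+\xx^{\uu-\mathbf r}g$, the argument is symmetric.
\begin{itemize}
\item If $d=1$, then $g=\pm\ell_i$ is forced, since a join-meet move needs $d\geq2$. Then $\xx^{\uu-\mathbf r}=1$, so $f\in(\ell_i:i\in A)$ if and only if $f'$ is.
\item If $d=2$, then $g\in J_A$, so $f\in J_A$ if and only if $f'\in J_A$.
\item If $d\geq3$, let $R$ be the ideal generated by the binomials of $I_{\Lambda_A}$ of degree $<d$. Then $g\in R$, because $g$ is itself such a binomial of degree $1$ or $2<d$. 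Hence $f\in R$ if and only if $f'\in R$.
\end{itemize}
The degenerate case $f'=0$ needs a separate word. There $f=\xx^{\uu-\mathbf r}g$ lies in $(\ell_i)$, in $J_A$, or in $R$ according to $d$, so $f$ is good, consistent with the convention that $0$ is good. Exchanging the roles of $\xx^\uu$ and $\xx^\vv$ amounts to replacing $f$ by $-f$, which changes neither membership in $I_{\Lambda_A}$ nor goodness.

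For (b), write $\xx^\uu=x_w\xx^{\uu_1}$ and $\xx^\vv=x_w\xx^{\vv_1}$, so that $f=x_wf_1$ with $f_1=\xx^{\uu_1}-\xx^{\vv_1}$. Here $\uu_1\neq\vv_1$, so $f_1$ is a binomial of degree $d-1$. Since $\uu_1-\vv_1=\uu-\vv\in\Lambda_A$, we have $f_1\in I_{\Lambda_A}$.
\begin{itemize}
\item If $d\geq3$, then $f_1$ is a binomial of $I_{\Lambda_A}$ of degree $d-1<d$, so $f$ lies in the ideal it generates and is redundant, hence good.
\item If $d=2$ and $f_1$ is good, then $f_1\in(\ell_i:i\in A)\subseteq J_A$, so $f=x_wf_1\in J_A$ and $f$ is good.
\end{itemize}

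I expect no serious obstacle. The only points needing care are the degree-preservation in (a), which is what keeps the move inside the same goodness clause, and the convention for $f'=0$.
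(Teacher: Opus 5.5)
Your proof is correct and follows the paper's argument. Part (a) is identical: you write $f-f'=\xx^{\uu-\mathbf r}g$ and check each degree clause of ``good'', and you also spell out the case $f'=0$, which the paper leaves implicit.

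The only deviation is in (b), where you get $f_1\in I_{\Lambda_A}$ directly from $\uu_1-\vv_1=\uu-\vv\in\Lambda_A$ via the binomial membership criterion for lattice ideals (which the paper recalls). The paper instead uses that $I_{\Lambda_A}$ is prime, by saturation of $\Lambda_A$, and contains no monomial. Your route is slightly more elementary, since it does not need saturation.
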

	
	\begin{proof}
		(a) We have $f-f'=\xx^{\uu-\mathbf r}g\in J_A\subseteq I_{\Lambda_A}$ by Proposition~\ref{prop:saturation-JA}, so $f'\in I_{\Lambda_A}$, and $\deg\xx^{\uu'}=\deg\xx^\uu=d$ because $g$ is homogeneous. If $d=1$, then $g=\pm\ell_i$ and $f-f'\in(\ell_i:i\in A)$. If $d=2$, then $f-f'\in J_A$. If $d\geq3$, then $f-f'$ is a multiple of $g$, which is a binomial of $I_{\Lambda_A}$ of degree at most $2<d$. In each case, $f$ is good if and only if $f'$ is good.
		
		(b) By Lemma~\ref{lem:integral-basis-LA}, $\Lambda_A$ is saturated, so $I_{\Lambda_A}$ is a prime ideal. Moreover, $I_{\Lambda_A}$ contains no monomial: indeed, it is contained in the kernel of the homomorphism $S_L\to K$ sending every variable to $1$, which maps every monomial to $1$. Hence $x_w\notin I_{\Lambda_A}$, and $x_wf_1=f\in I_{\Lambda_A}$ gives $f_1\in I_{\Lambda_A}$; clearly, $f_1$ is a binomial of degree $d-1$. If $d\geq3$, then $f$ is a multiple of the binomial $f_1\in I_{\Lambda_A}$ of degree $d-1$, so it is redundant. If $d=2$ and $f_1\in(\ell_i:i\in A)$, then $f\in J_A$.
	\end{proof}
	
	We now fix the setting in which we perform the induction on the rank of $L$. Recall that $\cT(L)\subseteq\{2,\ldots,m-1\}$.
	
	\begin{Setup}\label{setup}
		Let $m\geq2$, and assume that $A$ is admissible. Set $r=\max\cT(L)$ if $\cT(L)\neq\emptyset$, and $r=1$ otherwise. Then $1\leq r\leq m-1$, and we set $n=m-r+1$, so that $n\geq2$. No turning rank $k$ satisfies $r<k<m$, so $[a_r,b_m]=D_r\cup\cdots\cup D_m$ is a ladder, and we denote by $\alpha_j$ and $\gamma_j$, $0\leq j\leq n$, its elements denoted by $\lambda_j$ and $\mu_j$ in Discussion~\ref{lem:structure}. Thus
		\[
		B=[a_r,1_L]=\{\alpha_0,\ldots,\alpha_n,\gamma_0,\ldots,\gamma_n\}\cong\Chain{n+1}\times\Chain2,
		\]
		$\alpha_i\leq\gamma_j$ if and only if $i\leq j$, and $\gamma_i\nleq\alpha_j$ for all $i,j$. For $j\in[n]$ we set $B_j=D_{r+j-1}$, so that
		\[
		B_j=[\alpha_{j-1},\gamma_j]=\{\alpha_{j-1},\gamma_{j-1},\alpha_j,\gamma_j\},\qquad\{c_{r+j-1},d_{r+j-1}\}=\{\gamma_{j-1},\alpha_j\}.
		\]
		In particular, the linear binomial attached to $B_j$ is $\ell_{r+j-1}=\pm(x_{\gamma_{j-1}}-x_{\alpha_j})$. Moreover, $\alpha_j=a_{r+j}$ for $0\leq j\leq n-1$ and $\gamma_j=b_{r+j-1}$ for $1\leq j\leq n$; in particular, $\gamma_n=b_m=1_L$. We also denote by $\cA=\{D_i:i\in A\}$ the set of elementary diamonds whose rank belongs to $A$; thus $B_j\in\cA$ if and only if $r+j-1\in A$.
		
		We set $p=\gamma_n=1_L$ and $q=\alpha_n$. Let $k_0$ be the least integer in $\{0,\ldots,n\}$ such that $B_j\in\cA$ for every $k_0<j\leq n$; in other words, $B_{k_0+1},\ldots,B_n$ is the longest sequence of consecutive diamonds of $\cA$ ending with $B_n=D_m$. In particular, $k_0=n$ if $m\notin A$, and $B_{k_0}\notin\cA$ if $k_0\geq1$. Finally, we set
		\[
		t=n-k_0,\qquad F=\{\alpha_j,\gamma_j:k_0\leq j\leq n\}.
		\]
		Thus $t\geq1$ if and only if $m\in A$, and $F=\{p,q\}$ if $m\notin A$. See Figure~\ref{fig:setting}.
	\end{Setup}
	
	\begin{figure}[htbp]
		\centering
		\begin{tikzpicture}[x=0.62cm,y=0.62cm,vF/.style={circle,fill=black,inner sep=1.3pt},vB/.style={circle,draw=black,fill=white,inner sep=1.3pt},lab/.style={font=\scriptsize}]
			\begin{scope}[cm={1,1,-1,1,(0,0)}]
				\foreach \k in {2,4,5,6} \fill[gray!25] (\k-1,0) rectangle (\k,1);
				\draw[dashed] (-1,-2) rectangle (0,-1);
				\draw (0,-2) rectangle (1,0);
				\draw (0,-1) -- (1,-1);
				\node[lab] at (0.5,-0.5) {$E_1$};
				\node[lab] at (0.5,-1.5) {$E_2$};
				\foreach \k in {1,...,6} \draw (\k-1,0) rectangle (\k,1);
				\foreach \k in {1,...,6} \node[lab] at (\k-0.5,0.5) {$B_{\k}$};
				\foreach \k in {0,1,2} {\node[vB] at (\k,0) {}; \node[vB] at (\k,1) {};}
				\foreach \k in {3,...,6} {\node[vF] at (\k,0) {}; \node[vF] at (\k,1) {};}
				\foreach \k in {0,...,5} \node[lab,above left] at (\k,1) {$\gamma_{\k}$};
				\node[lab,above left] at (6,1) {$\gamma_6=p$};
				\node[lab,left] at (0,0) {$\alpha_0$};
				\node[lab,right] at (1,0) {$\alpha_1$};
				\foreach \k in {2,...,5} \node[lab,below right] at (\k,0) {$\alpha_{\k}$};
				\node[lab,below right] at (6,0) {$\alpha_6=q$};
			\end{scope}
		\end{tikzpicture}
		\caption{The setting of Setup~\ref{setup} with $n=6$. The diamonds of $\cA$ are shaded, so that $k_0=3$, and the elements of $F$ are the black dots. The diamonds $E_1=D_{r-1}$ and $E_2=D_{r-2}$ will appear in the proof of Proposition~\ref{prop:main}.}
		\label{fig:setting}
	\end{figure}
	
	\begin{Lemma}\label{lem:setup}
		In the setting of Setup~\ref{setup}, the following hold.
		\begin{enumerate}[(S1)]
			\item The elements $p$ and $q$ belong to no elementary diamond other than $D_m=B_n$, of which they are the greatest element and an element of rank $m$, respectively. Moreover, $L'=[0_L,b_{m-1}]$ is a thin distributive lattice of rank $m$, $L=L'\sqcup\{p,q\}$, the elementary diamonds of $L'$ are $D_1,\ldots,D_{m-1}$, the join-meet binomials of $L'$ are join-meet binomials of $L$, and $\cT(L')\subseteq\cT(L)$.
			\item If $r>1$, then $r\in\cT(L)$, $B_1\notin\cA$ and $k_0\geq1$. Moreover, the greatest element of $E_1=D_{r-1}$ is $\alpha_1$, and $D_i\cap B\subseteq\{\alpha_0,\alpha_1\}$ for every $i<r$.
			\item For $0\leq k<j\leq n$, the elements $\gamma_k$ and $\alpha_j$ are incomparable, $\gamma_k\wedge\alpha_j=\alpha_k$ and $\gamma_k\vee\alpha_j=\gamma_j$. In particular, $g_{k,j}=x_{\alpha_k}x_{\gamma_j}-x_{\gamma_k}x_{\alpha_j}$ is, up to sign, a join-meet binomial of $L$.
		\end{enumerate}
	\end{Lemma}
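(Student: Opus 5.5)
The plan is to derive all three items from Discussion~\ref{lem:structure}, using mainly the description of $L$ as $D_1\cup\cdots\cup D_m$ glued along edges and the ladder notation. We prove (S3) first, since it is immediate. The interval $B=[a_r,b_m]$ is a ladder, with $\alpha_j=\lambda_j$ and $\gamma_j=\mu_j$. Discussion~\ref{lem:structure} gives, for $k<j$, that $\gamma_k$ and $\alpha_j$ are incomparable, with $\gamma_k\wedge\alpha_j=\alpha_k$ and $\gamma_k\vee\alpha_j=\gamma_j$. By Remark~\ref{lem:basic}(c), $B$ is closed under meet and join, so these meets and joins computed in $B$ coincide with those computed in $L$. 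Hence $g_{k,j}$ is, up to sign, a join-meet binomial of $L$.

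For (S1), we use ranks. The element $p=1_L$ has rank $m+1$, and $q=\alpha_n$ is the element of rank $m$ in $B_n=D_m$ other than $\gamma_{n-1}=b_{m-1}$. Every $D_i$ with $i<m$ consists of elements of rank at most $i+1\le m$, and its only element of rank $m$ can occur when $i=m-1$, namely $b_{m-1}$. Therefore $p,q\notin D_i$ for $i<m$. Next we set $L'=[0_L,b_{m-1}]$. By Remark~\ref{lem:basic}(c) it is a distributive sublattice of rank $m$. Its elements are exactly those of rank at most $m-1$, together with $b_{m-1}$, since every element of rank at most $m-1$ lies in some $D_i$ with $i\le m-1$ and is therefore below $b_i\le b_{m-1}$. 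So $L'=D_1\cup\cdots\cup D_{m-1}$ and $L=L'\sqcup\{p,q\}$. Also $L'$ has exactly two elements of each rank $1,\dots,m-1$. It is simple, because an apex of $L'$ at rank $i\le m-1$ would be comparable with both elements of that rank, which is impossible. Meets and joins in $L'$ agree with those in $L$, so the elementary diamonds of $L'$ are $D_1,\ldots,D_{m-1}$ and its join-meet binomials are join-meet binomials of $L$. Finally, the turning rank condition $b_{k-1}=a_{k+1}$ for $2\le k\le m-2$ involves only $D_{k-1},D_{k+1}\subseteq L'$, so $\cT(L')\subseteq\cT(L)$.

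For (S2), assume $r>1$. Then $\cT(L)\ne\emptyset$ and $r=\max\cT(L)\in\cT(L)$. Admissibility condition (i) gives $r\notin A$, that is, $B_1=D_r\notin\cA$. Since $B_j\in\cA$ for all $j>k_0$, this forces $k_0\ge1$. Because $r$ is a turning rank, $b_{r-1}=a_{r+1}=\alpha_1$, so the greatest element of $E_1=D_{r-1}$ is $\alpha_1$. For $i<r$, every element of $D_i$ lies below $b_i\le b_{r-1}=\alpha_1$. The elements of $B$ that are at most $\alpha_1$ are exactly $\alpha_0$ and $\alpha_1$, because $\gamma_j\nleq\alpha_1$ for all $j$ and $\alpha_j\le\alpha_1$ only for $j\le1$. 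Hence $D_i\cap B\subseteq\{\alpha_0,\alpha_1\}$.

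The only delicate step is the inequality $b_i\le b_{r-1}$ for $i<r$, together with the fact that it yields $D_i\subseteq[0_L,b_{r-1}]$. We obtain it by induction on $i$: since $b_i\in\{c_{i+1},d_{i+1}\}\subseteq D_{i+1}$, we have $b_i<b_{i+1}$, so $b_i\le b_{r-1}$ for all $i\le r-1$. Every element of $D_i$ is at most $b_i$, and this gives the inclusion.
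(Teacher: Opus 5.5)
Your proof is correct and follows essentially the same route as the paper: rank counting to place $p$ and $q$ only in $D_m$, the chain $b_1<b_2<\cdots<b_{m-1}$ to identify $L'$ and obtain $L=L'\sqcup\{p,q\}$, and the ladder formulas of Discussion~\ref{lem:structure} for (S3). The differences are minor. In (S2) you show $D_i\subseteq[0_L,b_{r-1}]=[0_L,\alpha_1]$ and read off $B\cap[0_L,\alpha_1]=\{\alpha_0,\alpha_1\}$ from the ladder order, whereas the paper keeps only the elements of rank at most $r$ and excludes $\gamma_0$ by a rank argument. You also check explicitly that $L'$ is simple, which the paper leaves implicit, and you use $n\geq2$ without saying so when you write $a_{r+1}=\alpha_1$.
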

	
	\begin{proof}
		(S1) The elements of $D_i$ have rank between $i-1$ and $i+1$. Since $p=1_L$ has rank $m+1$, it belongs only to $D_m$. The element $q$ has rank $m$, so it can only belong to $D_{m-1}$ and $D_m$; since the only element of $D_{m-1}$ of rank $m$ is $b_{m-1}=\gamma_{n-1}$, and $\gamma_{n-1}\neq\alpha_n=q$, the element $q$ belongs only to $D_m$.
		
		For $1\leq i\leq m-2$ we have $b_i\in\{c_{i+1},d_{i+1}\}$ by Discussion~\ref{lem:structure}, so $b_i\leq b_{i+1}$; hence $c_i,d_i\leq b_i\leq b_{m-1}$ for every $i\in[m-1]$. Therefore $L'=[0_L,b_{m-1}]$ contains $0_L$ and all the elements of rank $1,\ldots,m-1$. On the other hand, the only element of $L'$ of rank at least $m$ is $b_{m-1}$, since $q$ is incomparable with $b_{m-1}$ and $1_L\nleq b_{m-1}$. Hence $L=L'\sqcup\{p,q\}$. By Remark~\ref{lem:basic}(c), $L'$ is a distributive lattice with the operations of $L$, and its rank function is the restriction of $\rank_L$, by Remark~\ref{lem:basic}(a). Thus $L'$ has rank $m$ and exactly two elements of rank $i$ for every $i\in[m-1]$; since $m-1\geq1$, it is a thin distributive lattice. Its elements of rank $i$ are $c_i$ and $d_i$, so its elementary diamonds are $D_1,\ldots,D_{m-1}$, and an integer $k\in\{2,\ldots,m-2\}$ is a turning rank of $L'$ if and only if $b_{k-1}=a_{k+1}$, that is, if and only if $k\in\cT(L)$. Finally, two elements of $L'$ are incomparable in $L'$ if and only if they are incomparable in $L$, and their meet and join are the same in $L'$ and in $L$.
		
		(S2) If $r>1$, then $r=\max\cT(L)\in\cT(L)$, so $B_1=D_r\notin\cA$ by condition (i) of Definition~\ref{Def admissible}, and hence $k_0\geq1$. Since $r\in\cT(L)$ and $n\geq2$, the greatest element of $D_{r-1}$ is $b_{r-1}=a_{r+1}=\alpha_1$. Let $i<r$. The elements of $D_i$ have rank at most $i+1\leq r$, and the elements of $B$ of rank at most $r$ are $\alpha_0$, of rank $r-1$, and $\gamma_0,\alpha_1$, of rank $r$. If $\gamma_0\in D_i$, then $\rank_L(\gamma_0)=r$ forces $i=r-1$ and $\gamma_0=b_{r-1}=\alpha_1$, which is impossible. Therefore $D_i\cap B\subseteq\{\alpha_0,\alpha_1\}$.
		
		(S3) This follows from Discussion~\ref{lem:structure}, since $\gamma_k$ and $\alpha_j$ correspond to $(k,1)$ and $(j,0)$ of $\Chain{n+1}\times\Chain2$, respectively, and meets and joins in a ladder are computed componentwise.
	\end{proof}
	
	We introduce two $A$-weights. Their values on $B$ in the case of Figure~\ref{fig:setting} are displayed in Figure~\ref{fig:omega}.
	
	\begin{figure}[htbp]
		\centering
		\begin{tikzpicture}[x=0.62cm,y=0.62cm,vF/.style={circle,fill=black,inner sep=1.3pt},vB/.style={circle,draw=black,fill=white,inner sep=1.3pt},val/.style={font=\footnotesize}]
			\begin{scope}[cm={1,1,-1,1,(0,0)}]
				\foreach \k in {2,4,5,6} \fill[gray!25] (\k-1,0) rectangle (\k,1);
				\foreach \k in {2,4,5,6} \draw[densely dashed,gray] (\k-1,1) -- (\k,0);
				\draw[dashed] (-1,-2) rectangle (0,-1);
				\draw (0,-2) rectangle (1,0);
				\draw (0,-1) -- (1,-1);
				\foreach \k in {1,...,6} \draw (\k-1,0) rectangle (\k,1);
				\foreach \k in {0,1,2} {\node[vB] at (\k,0) {}; \node[vB] at (\k,1) {};}
				\foreach \k in {3,...,6} {\node[vF] at (\k,0) {}; \node[vF] at (\k,1) {};}
				\foreach \v [count=\k from 0] in {1,1,2,1,2,3,4} \node[val,above left] at (\k,1) {$\v$};
				\node[val,left] at (0,0) {$0$};
				\node[val,right] at (1,0) {$0$};
				\foreach \v [count=\k from 2] in {1,0,1,2,3} \node[val,below right] at (\k,0) {$\v$};
			\end{scope}
		\end{tikzpicture}
		\caption{The values of the $A$-weight $\omega$ of Lemma~\ref{lem:weights-setting}(a) in the setting of Figure~\ref{fig:setting}; $\omega$ vanishes at all the other elements of $L$. The dashed segments join the two elements of the middle rank of each diamond of $\cA$, at which $\omega$ takes the same value.}
		\label{fig:omega}
	\end{figure}
	
	\begin{Lemma}\label{lem:weights-setting}
		In the setting of Setup~\ref{setup}, let $\zeta(0)=0$ and, for $j\in[n]$, let $\zeta(j)=\zeta(j-1)+1$ if $B_j\in\cA$, and $\zeta(j)=0$ otherwise; thus $\zeta(j)$ is the number of consecutive diamonds of $\cA$ ending with $B_j$. Then the following hold.
		\begin{enumerate}[(a)]
			\item The function $\omega:L\to\ZZ$ defined by $\omega(\alpha_j)=\zeta(j)$ and $\omega(\gamma_j)=\zeta(j)+1$ for $0\leq j\leq n$, and $\omega(w)=0$ for $w\notin B$, is a non-negative $A$-weight.
			\item If $k_0\neq1$ or $r=1$, then the characteristic function $\delta_F$ of $F$, defined by $\delta_F(w)=1$ if $w\in F$ and $\delta_F(w)=0$ otherwise, is an $A$-weight.
			\item For $k_0\leq j\leq n$ one has $\omega(\alpha_j)=j-k_0$ and $\omega(\gamma_j)=j-k_0+1$; in particular, $\omega(p)=t+1$ and $\omega(w)\leq t$ for every $w\in F\setminus\{p\}$. Moreover, if $\omega(w)\geq1$, then either $w=\gamma_j$ for some $0\leq j\leq n$, or $w=\alpha_j$ for some $j\in[n]$ with $B_j\in\cA$ and $\omega(\gamma_{j-1})=\omega(\alpha_j)$.
		\end{enumerate}
	\end{Lemma}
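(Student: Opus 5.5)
The plan is to verify the $A$-weight conditions diamond by diamond, splitting the elementary diamonds $D_i$ into those contained in the ladder $B$, namely $B_j=D_{r+j-1}$ for $j\in[n]$, and those with $i<r$. The latter meet $B$ only in $\{\alpha_0,\alpha_1\}$ by Lemma~\ref{lem:setup}(S2). Part (c) is essentially bookkeeping from the recursion defining $\zeta$, so I would prove (c) first and use it in (a).

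For (c), note that by definition of $k_0$ we have $B_j\in\cA$ for $k_0<j\leq n$. Also $\zeta(k_0)=0$, either because $k_0=0$ or because $B_{k_0}\notin\cA$. Hence $\zeta(j)=j-k_0$ for $k_0\leq j\leq n$, which gives the formulas, $\omega(p)=\omega(\gamma_n)=t+1$, and $\omega(w)\leq t$ on $F\setminus\{p\}$. For the last claim, suppose $\omega(w)\geq1$ and $w$ is not some $\gamma_j$. Then $w=\alpha_j$ with $\zeta(j)\geq1$, so $j\geq1$ and $B_j\in\cA$, and $\omega(\gamma_{j-1})=\zeta(j-1)+1=\zeta(j)=\omega(\alpha_j)$.

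For (a), non-negativity is clear. Take a diamond $B_j=\{\alpha_{j-1},\gamma_{j-1},\alpha_j,\gamma_j\}$ with least element $\alpha_{j-1}$, greatest element $\gamma_j$, and middle elements $\gamma_{j-1},\alpha_j$. The diamond condition reads
\[
\zeta(j-1)+\zeta(j)+1=\zeta(j-1)+1+\zeta(j),
\]
which always holds. If $B_j\in\cA$, the middle values $\zeta(j-1)+1$ and $\zeta(j)$ agree by the recursion. For $i<r$ (only when $r>1$), $D_i\cap B\subseteq\{\alpha_0,\alpha_1\}$. By (S2), $B_1\notin\cA$, so $\zeta(1)=0$ and $\omega$ vanishes on $\{\alpha_0,\alpha_1\}$. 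Thus $\omega\equiv0$ on $D_i$ and both conditions are trivial there.

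For (b), I would again check each diamond. On $B_j$ with $j>k_0$, all four elements lie in $F$, so both conditions hold. On $B_{k_0}$ with $k_0\geq1$, the elements $\alpha_{k_0},\gamma_{k_0}\in F$ are the top element and one middle element, while $\alpha_{k_0-1},\gamma_{k_0-1}\notin F$. The diamond sum is then $0+1=1+0$, and no equality of middle values is required since $B_{k_0}\notin\cA$. On $B_j$ with $j<k_0$ the function vanishes, since that diamond contains only indices $\leq k_0-1$. For diamonds $D_i$ with $i<r$ (so $r>1$), $D_i\cap F\subseteq\{\alpha_0,\alpha_1\}\cap F$. By (S2), $k_0\geq1$, and the hypothesis excludes $k_0=1$, so $k_0\geq2$ and $\alpha_0,\alpha_1\notin F$; hence $\delta_F$ vanishes on $D_i$. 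If $r=1$ there are no such diamonds.

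The only delicate point is this last case: one must make sure $\alpha_1$ is not in $F$, since it is the top of $E_1$ and would break the diamond equation there. This is exactly why the hypothesis "$k_0\neq1$ or $r=1$" appears.
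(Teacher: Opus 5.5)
Your proposal is correct and follows the paper's proof essentially step by step. It checks each of $B_1,\ldots,B_n$ directly (for $\delta_F$: none of $B_j$ in $F$, all of it, or exactly the top and one middle element when $j=k_0$), uses (S2) to make $\omega$ and $\delta_F$ vanish on $D_i$ for $i<r$ (with $k_0\geq2$ forced by the hypothesis in (b)), and computes $\zeta(j)=j-k_0$ for (c); the only difference is that you state (c) first, although your argument for (a) does not actually use it.
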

	
	\begin{proof}
		(a) Clearly $\omega\geq0$. Let $j\in[n]$. The least and the greatest element of $B_j$ are $\alpha_{j-1}$ and $\gamma_j$, and its elements of the middle rank are $\gamma_{j-1}$ and $\alpha_j$. Both $\omega(\alpha_{j-1})+\omega(\gamma_j)$ and $\omega(\gamma_{j-1})+\omega(\alpha_j)$ are equal to $\zeta(j-1)+\zeta(j)+1$, and if $B_j\in\cA$, then $\omega(\gamma_{j-1})=\zeta(j-1)+1=\zeta(j)=\omega(\alpha_j)$. If $r=1$, there are no other elementary diamonds. If $r>1$, then $B_1\notin\cA$ by (S2), so $\zeta(1)=0$ and $\omega$ vanishes at $\alpha_0$ and $\alpha_1$. Hence, again by (S2), $\omega$ vanishes on $D_i$ for every $i<r$, and the conditions of Lemma~\ref{lem:weights} hold trivially on these diamonds.
		
		(b) Let $j\in[n]$. The diamond $B_j$ has no element in $F$ if $j<k_0$, all its elements in $F$ if $j>k_0$, and exactly its greatest element $\gamma_{k_0}$ and one element of the middle rank, namely $\alpha_{k_0}$, in $F$ if $j=k_0\geq1$. In each case, the sum of the values of $\delta_F$ at the least and at the greatest element of $B_j$ equals the sum of its values at the two elements of the middle rank. Moreover, if $B_j\in\cA$, then $j\neq k_0$, so $\gamma_{j-1}$ and $\alpha_j$ are either both in $F$ or both outside $F$. If $r=1$ there are no other elementary diamonds. If $r>1$, then $k_0\geq1$ by (S2), and hence $k_0\geq2$ by our assumption, so that $\alpha_0,\alpha_1\notin F$; thus $\delta_F$ vanishes on $D_i$ for every $i<r$ by (S2).
		
		(c) We first compute $\zeta(j)$ for $k_0\leq j\leq n$. We have $\zeta(k_0)=0$: this holds by definition if $k_0=0$, while if $k_0\geq1$, then $B_{k_0}\notin\cA$ by the choice of $k_0$, and so $\zeta(k_0)=0$ by the definition of $\zeta$. Moreover, for $k_0<j\leq n$ we have $B_j\in\cA$, hence $\zeta(j)=\zeta(j-1)+1$. By induction on $j$, it follows that $\zeta(j)=j-k_0$ for every $k_0\leq j\leq n$, and therefore $\omega(\alpha_j)=j-k_0$ and $\omega(\gamma_j)=j-k_0+1$ for $k_0\leq j\leq n$. In particular, $\omega(p)=\omega(\gamma_n)=n-k_0+1=t+1$. Let $w\in F\setminus\{p\}$. If $w=\alpha_j$ for some $k_0\leq j\leq n$, then $\omega(w)=j-k_0\leq n-k_0=t$. Otherwise $w=\gamma_j$ for some $k_0\leq j\leq n-1$, and $\omega(w)=j-k_0+1\leq n-k_0=t$. Hence $\omega(w)\leq t$ for every $w\in F\setminus\{p\}$.
		
		Finally, let $w\in L$ with $\omega(w)\geq1$. Since $\omega$ vanishes outside $B$, we have $w=\alpha_j$ or $w=\gamma_j$ for some $0\leq j\leq n$. Assume that $w=\alpha_j$. Then $\zeta(j)=\omega(\alpha_j)\geq1$. Since $\zeta(0)=0$, we have $j\geq1$, and since $\zeta(j)=0$ whenever $B_j\notin\cA$, we get $B_j\in\cA$. Hence $\zeta(j)=\zeta(j-1)+1$ by the definition of $\zeta$, and so $\omega(\gamma_{j-1})=\zeta(j-1)+1=\zeta(j)=\omega(\alpha_j)$.
	\end{proof}
	
	The following lemma is the heart of the proof: it shows how to move, inside $B$, a variable indexed by an element of $F$ up to $x_p$.
	
	\begin{Lemma}\label{lem:transport}
		In the setting of Setup~\ref{setup}, let $f=\xx^\uu-\xx^\vv\in I_{\Lambda_A}$ be a binomial of degree $d$ such that $x_p\mid\xx^\uu$, $x_p\nmid\xx^\vv$, and $x_w\mid\xx^\vv$ for some $w\in F$. If every binomial of $I_{\Lambda_A}$ of degree less than $d$ is good, then $f$ is good.
	\end{Lemma}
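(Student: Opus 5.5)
The plan is to manipulate the monomial $\xx^\vv$ by moves (Lemma~\ref{lem:moves}(a)), which preserve goodness, until $x_p$ divides it. Once $x_p$ divides both monomials, Lemma~\ref{lem:moves}(b) writes $f=x_pf_1$ with $f_1\in I_{\Lambda_A}$ of degree $d-1$. The hypothesis that every binomial of degree less than $d$ is good then makes $f$ good: this is automatic if $d\geq3$, and for $d=2$ it uses that $f_1$ is good of degree one. The case $d=1$ must be excluded first. Here $f=x_p-x_w$ with $w\in F\setminus\{p\}$, and the $A$-weight $\omega$ of Lemma~\ref{lem:weights-setting} gives $\omega(p)=t+1>t\geq\omega(w)$, contradicting Lemma~\ref{lem:weights}. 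So assume $d\geq2$.

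The moves available inside $F$ are of two kinds. First, for $k_0<j\leq n$ the diamond $B_j$ lies in $\cA$, so the linear move $x_{\alpha_j}\leftrightarrow x_{\gamma_{j-1}}$ (from $\ell_{r+j-1}$) is allowed, and it preserves $\omega$ and keeps us inside $F$. Second, we have the join-meet moves $x_{\gamma_k}x_{\alpha_j}\to x_{\alpha_k}x_{\gamma_j}$ for $k<j$ from (S3). Using the linear moves, I first replace $w$ by an element of $F$ of the form $\gamma_j$ with $j\geq k_0$. Among all elements of $F$ dividing $\xx^\vv$ I choose one with maximal $\omega$; equivalently, I take the largest such index $j$. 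If $j=n$ we are done. Otherwise $\omega(\gamma_j)\leq t$.

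The weight $\omega$ now does the work. Since $x_p\mid\xx^\uu$ and $\omega\geq0$, we get $\omega(\uu)\geq t+1$, hence $\omega(\vv)\geq t+1>\omega(\gamma_j)$. Therefore $\xx^\vv$ contains a second variable $x_{w'}$ with $\omega(w')\geq1$. By Lemma~\ref{lem:weights-setting}(c), either $w'=\gamma_i$, or $w'=\alpha_i$ with $B_i\in\cA$ and $\omega(\gamma_{i-1})=\omega(\alpha_i)$.

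If $w'$ can be brought to the form $\alpha_i$ with $i>j$, the join-meet move $x_{\gamma_j}x_{\alpha_i}\to x_{\alpha_j}x_{\gamma_i}$ produces $\gamma_i\in F$ with larger index. Iterating, with a potential such as the largest index $j$ of a $\gamma_j\in F$ dividing $\xx^\vv$ (or $\omega(\vv)$ restricted to $F$), this must terminate with $j=n$, that is, with $x_p\mid\xx^\vv$. If instead $w'$ has index below $j$, the strategy is to compare total weights. In that case I would use the second weight $\delta_F$ of Lemma~\ref{lem:weights-setting}(b), in combination with $\omega$ and possibly $\omega-\delta_F$, to show that $\xx^\vv$ must contain enough $F$-variables of large $\omega$ to reach $p$. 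The point is that the $F$-variables of $\xx^\vv$ together must carry weight at least $t+1$ beyond what lies outside $F$, and this forces a variable $\alpha_i$ with $i>j$ to appear after suitable linear moves.

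The main obstacle I expect is this last bookkeeping step. The difficulty is ruling out that the extra $\omega$-weight in $\xx^\vv$ sits on elements $\gamma_i$ with $i<k_0$ (outside $F$), where no linear moves are available and the join-meet moves go the wrong way. It is worst in the exceptional case $k_0=1$, $r>1$, where $\delta_F$ is not an $A$-weight. For that case I would try a modified weight: $\delta_F$ adjusted on $E_1=D_{r-1}$, using that $\alpha_1$ is the top of $E_1$ and that $r-1\notin A$ or $r+1\notin A$ by admissibility. Failing that, I would treat the case directly by moves through $E_1$ and $E_2$, as Figure~\ref{fig:setting} suggests.
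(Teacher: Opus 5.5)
Your overall frame matches the paper's: moves preserve goodness, the goal is to make $x_p$ divide $\xx^\vv$ and then cancel it with Lemma~\ref{lem:moves}(b), and $d=1$ is excluded by $\omega$. But the step that actually makes progress is missing.

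You normalize the top $F$-variable to the upper position $\gamma_j$ and look for a partner $\alpha_i$ with $i>j$. By maximality of $\omega$, such a partner can only come from a further factor $x_{\gamma_j}$ or $x_{\alpha_{j+1}}$, and in general there is none. The extra weight forced by $\omega(\vv)\geq t+1$ typically sits on some $\gamma_k$ with $k\leq j$, possibly outside $F$. That is exactly the case you defer to an unspecified comparison of $\omega$ and $\delta_F$. This cannot work, because $A$-weights only constrain which variables occur; they never supply the move that is needed.

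Concretely, take $m=2$ and $A=\{2\}$, so $r=1$, $n=2$ and $k_0=t=1$. The binomial $f=x_{\alpha_0}x_p-x_{\gamma_0}x_{\gamma_1}$ lies in $I_{\Lambda_A}$, since its exponent vector is $\etab_1+\etab_2\pm\rhob_2$. Here your top $F$-variable is $\gamma_1$. The only other factor is $\gamma_0\notin F$, which is comparable with $\gamma_1$. No $\alpha_i$ with $i>1$ can be produced without using up $\gamma_1$ itself.

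The missing idea is to put the top $F$-variable in the \emph{lower} position and pair it with the low $\gamma_k$. Your worry that join-meet moves go the wrong way there is unfounded. The argument runs as follows.
\begin{enumerate}[(1)]
\item If $\gamma_j\in F$ with $j<n$, then $B_{j+1}\in\cA$, so the linear move $x_{\gamma_j}\to x_{\alpha_{j+1}}$ is available.
\item Write $\xx^\vv=x_{\alpha_{j+1}}\xx^{\vv'}$ with $\omega(\vv')\geq1$. By Lemma~\ref{lem:weights-setting}(c), after at most one further linear move $x_{\alpha_i}\to x_{\gamma_{i-1}}$, the monomial $\xx^{\vv'}$ has a factor $x_{\gamma_k}$.
\item Then $k\leq j$: either $\gamma_k\notin F$, so $k<k_0\leq j$, or $\omega(\gamma_k)\leq\omega(\gamma_j)$ by maximality.
\item By (S3), the join-meet move $x_{\gamma_k}x_{\alpha_{j+1}}\to x_{\alpha_k}x_{\gamma_{j+1}}$ raises the top index by one. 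Iterating reaches $\gamma_n=p$.
\end{enumerate}
In the example this gives $x_{\gamma_0}x_{\gamma_1}\to x_{\gamma_0}x_{\alpha_2}\to x_{\alpha_0}x_p$, and $f$ becomes $0$. This is the paper's argument, run by induction on $t-a$, where $a$ is the maximal $\omega$-value of an $F$-variable of $\xx^\vv$.

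Finally, only $\omega$ is needed in this lemma. The exceptional case $k_0=1$, $r>1$ and the weight $\psi$ enter only in the proof of Proposition~\ref{prop:main}. There they serve to produce the hypothesis $x_w\mid\xx^\vv$ with $w\in F$, which this lemma takes as given.
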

	
	\begin{proof}
		Let $a$ be the maximum of $\omega(w)$ over the elements $w\in F$ such that $x_w\mid\xx^\vv$. Since $x_p\nmid\xx^\vv$, we have $a\leq t$ by Lemma~\ref{lem:weights-setting}(c), and we proceed by induction on $t-a$. Set $j=k_0+a$, so that $k_0\leq j\leq n$. By Lemma~\ref{lem:weights-setting}(c), for $k_0\leq k\leq n$ we have $\omega(\alpha_k)=a$ if and only if $k=j$, and $\omega(\gamma_k)=a$ if and only if $k=j-1$, the latter being possible only when $j-1\geq k_0$, that is, $a\geq1$. Hence the elements of $F$ at which $\omega$ takes the value $a$ are $\alpha_j$ and, if $a\geq1$, $\gamma_{j-1}$. By the choice of $a$, the monomial $\xx^\vv$ is divisible by $x_{\alpha_j}$ or by $x_{\gamma_{j-1}}$.
		
		We claim that we may assume $x_{\alpha_j}\mid\xx^\vv$. Suppose that $x_{\alpha_j}\nmid\xx^\vv$, so that $a\geq1$ and $x_{\gamma_{j-1}}\mid\xx^\vv$. Since $j>k_0$, we have $B_j\in\cA$, and the linear binomial attached to $B_j$ is $\pm(x_{\gamma_{j-1}}-x_{\alpha_j})$. We apply Lemma~\ref{lem:moves}(a) to $\xx^\vv$ with $g=x_{\gamma_{j-1}}-x_{\alpha_j}$, that is, we replace one factor $x_{\gamma_{j-1}}$ of $\xx^\vv$ by $x_{\alpha_j}$. We obtain $f'=\xx^\uu-\xx^{\vv'}$, with $\vv'=\vv-\ee_{\gamma_{j-1}}+\ee_{\alpha_j}$, which is either zero or a binomial of $I_{\Lambda_A}$ of degree $d$, and $f$ is good if and only if $f'$ is good. If $f'=0$, then $f'$ is good and we are done. Otherwise, $f'$ satisfies the same assumptions as $f$, with the same value of $a$: indeed, $\xx^\uu$ has not changed, $x_p\nmid\xx^{\vv'}$ since $\alpha_j\neq p$, and $\xx^{\vv'}$ is divisible by $x_{\alpha_j}$, with $\alpha_j\in F$, while every other element of $F$ whose variable divides $\xx^{\vv'}$ already occurs in $\xx^\vv$. Replacing $f$ by $f'$, we may therefore assume that $x_{\alpha_j}\mid\xx^\vv$.
		
		By Lemmas~\ref{lem:weights} and~\ref{lem:weights-setting}(a), we have $\omega(\vv)=\omega(\uu)\geq\omega(p)=t+1$. Hence $\xx^\vv=x_{\alpha_j}\xx^{\vv'}$ with $\omega(\vv')=\omega(\vv)-a\geq t+1-a\geq1$; in particular, $d\geq2$. Since $\omega$ is non-negative and $\omega(\vv')=\sum_{w\in L}\vv'(w)\,\omega(w)\geq1$, there exists $w'\in L$ such that $x_{w'}$ divides $\xx^{\vv'}$ and $\omega(w')\geq1$. By Lemma~\ref{lem:weights-setting}(c), either $w'=\gamma_k$ for some $0\leq k\leq n$, or $w'=\alpha_k$ for some $k\in[n]$ such that $B_k\in\cA$ and $\omega(\gamma_{k-1})=\omega(\alpha_k)$.
		
		We show that the second case can be reduced to the first one. Assume that $w'=\alpha_k$ with $B_k\in\cA$. Applying Lemma~\ref{lem:moves}(a) with $g=x_{\alpha_k}-x_{\gamma_{k-1}}$, we replace the factor $x_{\alpha_k}$ of $\xx^{\vv'}$ by $x_{\gamma_{k-1}}$, and we obtain $f'=\xx^\uu-x_{\alpha_j}\xx^{\vv''}$, where $\vv''=\vv'-\ee_{\alpha_k}+\ee_{\gamma_{k-1}}$. Then $f'$ is either zero, and hence good, or a binomial of $I_{\Lambda_A}$ of degree $d$; moreover, $f$ is good if and only if $f'$ is good. We now verify that $f'$ satisfies the same assumptions as $f$, with the same values of $a$ and $j$. The first monomial $\xx^\uu$ is unchanged, while the second monomial $x_{\alpha_j}\xx^{\vv''}$ is divisible by $x_{\alpha_j}$ and is not divisible by $x_p$, because $\gamma_{k-1}\neq p$. Furthermore, if $\alpha_k\in F$, then $k\geq k_0$; since $k\geq1$ and $B_{k_0}\notin\cA$ whenever $k_0\geq1$, we get $k>k_0$, hence $\gamma_{k-1}\in F$ and $\omega(\gamma_{k-1})=\omega(\alpha_k)\leq a$. If $\alpha_k\notin F$, then $k<k_0$, and so $\gamma_{k-1}\notin F$. In both cases, the maximum of $\omega$ over the elements of $F$ whose variables divide $x_{\alpha_j}\xx^{\vv''}$ is equal to $a$ and is attained at $\alpha_j$. Finally, $\omega(\vv'')=\omega(\vv')$, and $x_{\gamma_{k-1}}$ divides $\xx^{\vv''}$, with $\omega(\gamma_{k-1})\geq1$. Therefore, replacing $f$ by $f'$, we may assume that $w'=\gamma_k$ for some $0\leq k\leq n$.
		
		Then $k\leq n-1$, since $x_p\nmid\xx^\vv$, and $k<j$: this is clear if $\gamma_k\notin F$, because then $k<k_0\leq j$, while if $\gamma_k\in F$, then $k-k_0+1=\omega(\gamma_k)\leq a=j-k_0$ by the choice of $a$. By (S3), $g=x_{\gamma_k}x_{\alpha_j}-x_{\alpha_k}x_{\gamma_j}=-g_{k,j}$ is, up to sign, a join-meet binomial of $L$. The monomial $x_{\gamma_k}x_{\alpha_j}$ divides $\xx^\vv=x_{\alpha_j}\xx^{\vv'}$, because $x_{\gamma_k}$ divides $\xx^{\vv'}$, and $d\geq2$. Applying Lemma~\ref{lem:moves}(a) with this $g$, we replace $x_{\gamma_k}x_{\alpha_j}$ by $x_{\alpha_k}x_{\gamma_j}$ in $\xx^\vv$, and we obtain $f'=\xx^\uu-\xx^{\vv^*}$, where $\vv^*=\vv-\ee_{\gamma_k}-\ee_{\alpha_j}+\ee_{\alpha_k}+\ee_{\gamma_j}$. Then $f'$ is either zero, and hence good, or a binomial of $I_{\Lambda_A}$ of degree $d$, and $f$ is good if and only if $f'$ is good. We may assume $f'\neq0$, and we distinguish two cases.
		
		Assume first that $a=t$. Then $j=k_0+t=n$, and so $\gamma_j=p$. Hence $x_p$ divides both $\xx^\uu$ and $\xx^{\vv^*}$, and $f'=x_pf_1$ for some binomial $f_1\in I_{\Lambda_A}$ of degree $d-1$. If $d\geq3$, then $f'$ is good by Lemma~\ref{lem:moves}(b). If $d=2$, then $f_1$ has degree one, so it is good by our assumption on the binomials of degree less than $d$, and $f'$ is good again by Lemma~\ref{lem:moves}(b). This settles the case $t-a=0$.
		
		Assume now that $a<t$. Then $j<n$, so that $\gamma_j\neq p$, and also $\alpha_k\neq p$. Since $x_p\nmid\xx^\vv$, it follows that $x_p\nmid\xx^{\vv^*}$. Moreover, $\xx^{\vv^*}$ is divisible by $x_{\gamma_j}$, where $\gamma_j\in F$ and $\omega(\gamma_j)=j-k_0+1=a+1$ by Lemma~\ref{lem:weights-setting}(c). Let $w\in F\setminus\{\gamma_j\}$ be such that $x_w$ divides $\xx^{\vv^*}$. If $w\neq\alpha_k$, then $x_w$ divides $\xx^\vv$, and so $\omega(w)\leq a$ by the choice of $a$. If $w=\alpha_k$, then $\gamma_k\in F$, since both $\alpha_k$ and $\gamma_k$ belong to $F$ exactly when $k\geq k_0$, and $x_{\gamma_k}$ divides $\xx^\vv$; therefore $\omega(\alpha_k)=\omega(\gamma_k)-1\leq a-1$. Consequently, $f'$ satisfies the assumptions of the lemma, and the maximum of $\omega$ over the elements of $F$ whose variables divide $\xx^{\vv^*}$ is $a+1$. Since $t-(a+1)<t-a$, the binomial $f'$ is good by the inductive hypothesis, and hence so is $f$.
	\end{proof}
	
	The last ingredient allows us to pass from $L$ to the thin distributive lattice $L'$ of (S1).
	
	\begin{Lemma}\label{lem:restriction}
		In the setting of Setup~\ref{setup}, let $L'=[0_L,b_{m-1}]$ and $A'=A\cap[m-1]$, and denote by $J'_{A'}\subseteq S_{L'}$ and $\Lambda'_{A'}\subseteq\ZZ^{L'}$ the ideal and the lattice attached to $(L',A')$. Then $A'$ is admissible for $L'$ and $J'_{A'}\subseteq J_A$. Moreover, if $\uu,\vv\in\NN^{L'}$, then $\xx^\uu-\xx^\vv\in I_{\Lambda_A}$ if and only if $\xx^\uu-\xx^\vv\in I_{\Lambda'_{A'}}$.
	\end{Lemma}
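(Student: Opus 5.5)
The lemma has three parts: admissibility of $A'$, the inclusion $J'_{A'}\subseteq J_A$, and the comparison of the two lattice ideals. The first two are formal consequences of (S1); the third reduces to a statement about the lattices $\Lambda_A\cap\ZZ^{L'}$ and $\Lambda'_{A'}$, which I would prove using the $\ZZ$-bases of Lemma~\ref{lem:integral-basis-LA}.

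\emph{Admissibility and inclusion.} By (S1), $\cT(L')\subseteq\cT(L)$. Hence $A'\cap\cT(L')\subseteq A\cap\cT(L)=\emptyset$. Moreover, for $k\in\cT(L')$ we have $\{k-1,k+1\}\subseteq A'$ only if $\{k-1,k+1\}\subseteq A$, which is excluded. So $A'$ is admissible for $L'$. For the inclusion, view $S_{L'}\subseteq S_L$. The join-meet binomials of $L'$ are join-meet binomials of $L$ by (S1). The elements of $L'$ of rank $i\leq m-1$ are exactly $c_i,d_i$, so the linear binomials $\ell_i$ for $i\in A'$ are the same in both settings. Hence $J'_{A'}\subseteq J_A$.

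\emph{Comparison of lattice ideals.} For $\uu,\vv\in\NN^{L'}$ we have $\xx^\uu-\xx^\vv\in I_{\Lambda_A}$ if and only if $\uu-\vv\in\Lambda_A$. So it suffices to show $\Lambda_A\cap\ZZ^{L'}=\Lambda'_{A'}$, where $\ZZ^{L'}\subseteq\ZZ^L$ is the set of vectors vanishing at $p$ and $q$.

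The generators of $\Lambda'_{A'}$ are $\etab_i$ for $i\leq m-1$ and $\rhob_i$ for $i\in A'$. These are among the generators of $\Lambda_A$, and they are supported in $L'$ by (S1). Hence $\Lambda'_{A'}\subseteq\Lambda_A\cap\ZZ^{L'}$.

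For the converse, write $\zz\in\Lambda_A\cap\ZZ^{L'}$ in the basis of Lemma~\ref{lem:integral-basis-LA}:
\[
\zz=\sum_{i\in[m]}a_i\etab_i+\sum_{i\in A}b_i\rhob_i .
\]
By (S1), $p$ and $q$ lie only in $D_m$, so only $\etab_m$ and $\rhob_m$ (the latter if $m\in A$) can have nonzero coordinates at $p$ or $q$. Since $p=1_L=b_m$, we get $\zz(p)=a_m$. Since $q\in\{c_m,d_m\}$, we get $\zz(q)=-a_m\pm b_m$. From $\zz(p)=\zz(q)=0$ we conclude $a_m=0$, and then $b_m=0$ when $m\in A$. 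Thus $\zz$ is an integer combination of the generators of $\Lambda'_{A'}$.

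\emph{Main point.} The only real content is this coordinate computation at $p$ and $q$. It works because $p$ and $q$ appear in no elementary diamond other than $D_m$, by (S1). Everything else is bookkeeping of notation between $L$ and $L'$.
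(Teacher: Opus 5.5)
Your proposal is correct and follows the same route as the paper. Admissibility and $J'_{A'}\subseteq J_A$ come from (S1). For the converse, you write the difference vector in terms of the generators of $\Lambda_A$ and read off the coordinates at $p$ and $q$, which only $\etab_m$ and $\rhob_m$ can affect. Naming the coefficients $a_i,b_i$ clashes with the paper's $a_i=c_i\wedge d_i$ and $b_i=c_i\vee d_i$, so ``$p=1_L=b_m$'' and ``$\zz(q)=-a_m\pm b_m$'' read ambiguously; also, the basis property of Lemma~\ref{lem:integral-basis-LA} is not needed, since any expression in the generators suffices.
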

	
	\begin{proof}
		By (S1), the turning ranks of $L'$ are turning ranks of $L$, so $A'$ is admissible for $L'$; moreover, the join-meet binomials of $L'$ are join-meet binomials of $L$, and so $J'_{A'}\subseteq J_A$. We regard $\ZZ^{L'}$ as a subgroup of $\ZZ^L$ in the natural way. Since the elementary diamonds of $L'$ are $D_1,\ldots,D_{m-1}$, the lattice $\Lambda'_{A'}$ is generated by $\etab_i$, $i\in[m-1]$, and $\rhob_i$, $i\in A'$, so $\Lambda'_{A'}\subseteq\Lambda_A$, and one implication follows. Conversely, assume that $\xx^\uu-\xx^\vv\in I_{\Lambda_A}$, and write $\uu-\vv=\sum_{i\in[m]}z_i\etab_i+\sum_{i\in A}z'_i\rhob_i$ with $z_i,z'_i\in\ZZ$. By (S1), only $\etab_m$ and $\rhob_m$ can have a non-zero coordinate at $p$ or $q$, and $\etab_m(p)=1$, $\etab_m(q)=-1$, $\rhob_m(p)=0$ and $\rhob_m(q)=\pm1$. Since $\uu-\vv$ vanishes at $p$ and $q$, we get $z_m=0$ and, if $m\in A$, $z'_m=0$. Therefore $\uu-\vv\in\Lambda'_{A'}$, that is, $\xx^\uu-\xx^\vv\in I_{\Lambda'_{A'}}$.
	\end{proof}
	
	\begin{proof}[Proof of Proposition~\ref{prop:main}]
		We proceed by induction on $m$.
		
		\emph{The case $m=1$.} Then $L=\{0_L,c_1,d_1,1_L\}\cong\Chain2\times\Chain2$. If $A=\emptyset$, then $J_A=(x_{0_L}x_{1_L}-x_{c_1}x_{d_1})$, while if $A=\{1\}$, then $S_L/J_A\cong K[x_{0_L},x_{1_L},x_{c_1}]/(x_{0_L}x_{1_L}-x_{c_1}^2)$, by identifying $x_{d_1}$ with $x_{c_1}$. In both cases $S_L/J_A$ is a domain, since $x_{0_L}x_{1_L}-x_{c_1}x_{d_1}$ and $x_{0_L}x_{1_L}-x_{c_1}^2$ are irreducible polynomials: they have degree one in $x_{0_L}$, and the coefficient $x_{1_L}$ of $x_{0_L}$ is coprime with the constant term. Hence $J_A$ is prime, and so $J_A=I_{\Lambda_A}$ by Theorem~\ref{thm:prime-iff-lattice-ideal}. Since $J_A$ is generated by binomials of $I_{\Lambda_A}$ of degree at most two, statement (ii) and the second part of (i) are clear. Finally, the degree one component of $J_A$ is $K\ell_1$ if $1\in A$, and zero otherwise, so every binomial of degree one of $I_{\Lambda_A}$ belongs to $(\ell_i:i\in A)$.
		
		\emph{The inductive step.} Let $m\geq2$, assume that the statement holds for thin distributive lattices of rank $m$, and place ourselves in the setting of Setup~\ref{setup}; let $L'$ and $A'$ be as in Lemma~\ref{lem:restriction}. We prove that every binomial $f=\xx^\uu-\xx^\vv\in I_{\Lambda_A}$ is good by induction on $d=\deg f$, assuming that all the binomials of $I_{\Lambda_A}$ of degree less than $d$ are good.
		
		We first make two reductions. Assume that $m\in A$. Then $B_n=D_m\in\cA$, and the linear binomial attached to $B_n$ is $\ell_m=\pm(x_{\gamma_{n-1}}-x_q)$. Applying Lemma~\ref{lem:moves}(a) with $g=x_q-x_{\gamma_{n-1}}$ as many times as necessary, we replace every factor $x_q$ of $\xx^\uu$ and of $\xx^\vv$ by $x_{\gamma_{n-1}}$. These moves do not affect the goodness of $f$, and they do not introduce the variable $x_p$, since $\gamma_{n-1}\neq p$. Hence, if $m\in A$, we may assume that $x_q$ divides neither $\xx^\uu$ nor $\xx^\vv$. Moreover, if $\xx^\uu$ and $\xx^\vv$ have a common variable, then $f$ is good by Lemma~\ref{lem:moves}(b), where for $d=2$ we use that the binomials of $I_{\Lambda_A}$ of degree one are good. Therefore, we may also assume that $\xx^\uu$ and $\xx^\vv$ have no common variable. We now distinguish two cases, according to whether $x_p$ divides one of the two monomials or not.
		
		\emph{Case 1: $x_p$ divides neither $\xx^\uu$ nor $\xx^\vv$.} We claim that $x_q$ divides neither of them as well. If $m\in A$, this holds by the first reduction. If $m\notin A$, then $k_0=n\geq2$, $t=0$ and $F=\{p,q\}$, so $\delta_F$ is an $A$-weight by Lemma~\ref{lem:weights-setting}(b). By Lemma~\ref{lem:weights}, we get $\uu(p)+\uu(q)=\vv(p)+\vv(q)$, that is, $\uu(q)=\vv(q)$, because $\uu(p)=\vv(p)=0$. Since $\xx^\uu$ and $\xx^\vv$ have no common variable, it follows that $\uu(q)=\vv(q)=0$, and the claim is proved. Since $L=L'\sqcup\{p,q\}$ by (S1), we obtain $f\in S_{L'}$, and hence $f\in I_{\Lambda'_{A'}}$ by Lemma~\ref{lem:restriction}. By the inductive hypothesis on $m$, $f$ is good with respect to $(L',A')$, hence with respect to $(L,A)$: if $d=1$, then $f\in(\ell_i:i\in A')\subseteq(\ell_i:i\in A)$; if $d=2$, then $f\in J'_{A'}\subseteq J_A$; if $d\geq3$, then $f$ lies in the ideal generated by the binomials of $I_{\Lambda'_{A'}}$ of degree less than $d$, which belong to $I_{\Lambda_A}$.
		
		\emph{Case 2: $x_p$ divides one of the two monomials.} Replacing $f$ by $-f$, we may assume that $x_p\mid\xx^\uu$ and $x_p\nmid\xx^\vv$. By Lemma~\ref{lem:transport}, it is enough to show that, after finitely many moves on $\xx^\vv$ which do not introduce the variable $x_p$, the monomial $\xx^\vv$ becomes divisible by $x_w$ for some $w\in F$. Note that each of these moves either produces zero, in which case $f$ is good, or produces a binomial of $I_{\Lambda_A}$ of degree $d$ which is good if and only if $f$ is good and still satisfies $x_p\mid\xx^\uu$ and $x_p\nmid\xx^\vv$.
		\begin{itemize}
			\item \emph{Subcase 2a: $k_0\neq1$ or $r=1$.} In this case no move is needed. Indeed, $\delta_F$ is an $A$-weight by Lemma~\ref{lem:weights-setting}(b), and so $\delta_F(\vv)=\delta_F(\uu)\geq\uu(p)\geq1$ by Lemma~\ref{lem:weights}. Hence $x_w\mid\xx^\vv$ for some $w\in F$.
			
			\item \emph{Subcase 2b: $k_0=1$ and $r>1$.} This is the only point where condition (ii) of Definition~\ref{Def admissible} is used. Here $\delta_F$ is not an $A$-weight: indeed, by (S2), the greatest element of $E_1=D_{r-1}$ is $\alpha_1\in F$, while the other three elements of $E_1$ do not belong to $F$, since $E_1\cap B\subseteq\{\alpha_0,\alpha_1\}$ and $\alpha_0\notin F$. We therefore replace $\delta_F$ by a different $A$-weight $\psi$, and we proceed in two steps.
			
			\emph{Step 1: construction of $\psi$.} By (S2), $r$ is a turning rank and $B_1=D_r\notin\cA$, and $B_2\in\cA$ since $k_0=1$, that is, $r+1\in A$. Hence $r-1\notin A$, that is, $E_1\notin\cA$, by condition (ii) of Definition~\ref{Def admissible}. Set $r'=\max\bigl(\cT(L)\cap\{2,\ldots,r-1\}\bigr)$ if this set is non-empty, and $r'=1$ otherwise, and let $h=r-r'\geq1$. No turning rank $k$ satisfies $r'<k<r$, so $[a_{r'},b_r]=D_{r'}\cup\cdots\cup D_r$ is a ladder, and we denote by $\beta_y$ and $\beta'_y$, for $-h\leq y\leq1$, its elements denoted by $\lambda_{y+h}$ and $\mu_{y+h}$ in Discussion~\ref{lem:structure}. Thus $B'=[a_{r'},b_r]=\{\beta_y,\beta'_y:-h\leq y\leq1\}\cong\Chain{h+2}\times\Chain2$, and setting $E_i=D_{r-i}$ for $0\leq i\leq h$, Discussion~\ref{lem:structure} gives
			\[
			E_i=[\beta_{-i},\beta'_{1-i}]=\{\beta_{-i},\beta'_{-i},\beta_{1-i},\beta'_{1-i}\},\qquad\{c_{r-i},d_{r-i}\}=\{\beta'_{-i},\beta_{1-i}\}.
			\]
			In particular, $E_0=B_1$. By Discussion~\ref{lem:structure} and (S2), we have $\beta_0=a_r=\alpha_0$, $\beta'_0=b_{r-1}=\alpha_1$ and $\beta'_1=b_r=\gamma_1$; since $\{\beta'_0,\beta_1\}=\{c_r,d_r\}=\{\gamma_0,\alpha_1\}$, we also get $\beta_1=\gamma_0$. Moreover, $B\cap B'=\{\alpha_0,\alpha_1,\gamma_0,\gamma_1\}$: indeed, these four elements belong to $B'$, the elements of $B'$ have rank at most $r+1$, the only element of $B'$ of rank $r+1$ is $b_r=\gamma_1$, and the elements of $B$ of rank at most $r+1$ are $\alpha_0,\gamma_0,\alpha_1,\gamma_1,\alpha_2$. Finally, assume that $r'>1$. Then $r'\in\cT(L)$, so $E_h=D_{r'}\notin\cA$ by condition (i) of Definition~\ref{Def admissible}, and $b_{r'-1}=a_{r'+1}=\beta_{1-h}$ by Discussion~\ref{lem:structure}. We claim that every elementary diamond $D_i$ with $i<r'$ meets $B\cup B'$ at most in $\beta_{-h}=a_{r'}$ and $\beta_{1-h}$. Indeed, the elements of $D_i$ have rank at most $i+1\leq r'$. The elements of $B'$ of rank at most $r'$ are $\beta_{-h}$, of rank $r'-1$, and $\beta'_{-h},\beta_{1-h}$, of rank $r'$, while the elements of $B$ have rank at least $r-1\geq r'$, and the only one of rank $r'$, if any, is $\alpha_0=\beta_0$, which happens exactly when $h=1$, so that $\beta_0=\beta_{1-h}$. Finally, if $\beta'_{-h}\in D_i$, then $\rank_L(\beta'_{-h})=r'$ forces $i=r'-1$ and $\beta'_{-h}=b_{r'-1}=\beta_{1-h}$, which is impossible. This proves the claim.
			
			Define $\zeta'(-h)=0$ and, for $-h<y\leq1$, let $\zeta'(y)=\zeta'(y-1)+1$ if $E_{1-y}\in\cA$, and $\zeta'(y)=0$ otherwise. Since $E_0,E_1\notin\cA$, we have $\zeta'(0)=\zeta'(1)=0$, and if $r'>1$, then $\zeta'(1-h)=0$, since $E_h\notin\cA$. Let $\psi:L\to\ZZ$ be defined by $\psi(\beta_y)=\zeta'(y)$ and $\psi(\beta'_y)=\zeta'(y)+1$ for $-h\leq y\leq1$, $\psi(\alpha_j)=\psi(\gamma_j)=1$ for $2\leq j\leq n$, and $\psi(w)=0$ otherwise; this is well defined, because $\alpha_j,\gamma_j\notin B'$ for $j\geq2$. Then $\psi\geq0$, $\psi(p)=1$, and $\psi$ coincides with $\delta_F$ on $B$, since $F=\{\alpha_j,\gamma_j:1\leq j\leq n\}$, $\psi(\alpha_0)=\psi(\beta_0)=0$, $\psi(\gamma_0)=\psi(\beta_1)=0$, $\psi(\alpha_1)=\psi(\beta'_0)=1$ and $\psi(\gamma_1)=\psi(\beta'_1)=1$.
			
			We show that $\psi$ is an $A$-weight, checking the conditions of Lemma~\ref{lem:weights} on every elementary diamond. On the diamonds $B_1,\ldots,B_n$ this follows from the proof of Lemma~\ref{lem:weights-setting}(b), since $\psi$ and $\delta_F$ coincide on $B$ and that part of the proof does not use the assumption on $k_0$. For a diamond $E_{1-y}$ with $-h<y\leq0$, whose least and greatest elements are $\beta_{y-1}$ and $\beta'_y$, both $\psi(\beta_{y-1})+\psi(\beta'_y)$ and $\psi(\beta'_{y-1})+\psi(\beta_y)$ are equal to $\zeta'(y-1)+\zeta'(y)+1$, and if $E_{1-y}\in\cA$, then $\psi(\beta_y)=\zeta'(y)=\zeta'(y-1)+1=\psi(\beta'_{y-1})$. Finally, if $r'>1$, every diamond $D_i$ with $i<r'$ meets $B\cup B'$ at most in $\beta_{-h}$ and $\beta_{1-h}$, where $\psi$ vanishes because $\zeta'(-h)=\zeta'(1-h)=0$; hence $\psi$ vanishes on $D_i$, and the conditions of Lemma~\ref{lem:weights} hold trivially on it. Therefore $\psi$ is an $A$-weight.
			
			\emph{Step 2: moving a variable indexed by $F$ into $\xx^\vv$.} If $\xx^\vv$ is already divisible by some $x_w$ with $w\in F$, there is nothing to prove, so assume that it is not. Since $k_0=1$, the element $\gamma_0$ is the only element outside $F$ at which $\omega$ is positive. By Lemmas~\ref{lem:weights} and~\ref{lem:weights-setting}(a), we have $\omega(\vv)=\omega(\uu)\geq\omega(p)\geq1$, and therefore $x_{\gamma_0}\mid\xx^\vv$. Similarly, by Step~1, $\psi(\vv)=\psi(\uu)\geq\psi(p)=1$, so $\xx^\vv$ is divisible by $x_w$ for some $w\notin F$ with $\psi(w)\geq1$. By the definition of $\psi$, either $w=\beta'_y$ with $-h\leq y\leq-1$, or $w=\beta_y$ with $-h<y\leq-1$ and $\zeta'(y)\geq1$. In the latter case $E_{1-y}\in\cA$, its linear binomial is $\pm(x_{\beta_y}-x_{\beta'_{y-1}})$, and the corresponding move replaces $x_{\beta_y}$ by $x_{\beta'_{y-1}}$ in $\xx^\vv$; this move does not introduce $x_p$ and does not remove $x_{\gamma_0}$. So we may assume $w=\beta'_y$ for some $-h\leq y\leq-1$. Now $\gamma_0=\beta_1$ and $\beta'_y$ are incomparable, $\beta'_y\wedge\beta_1=\beta_y$ and $\beta'_y\vee\beta_1=\beta'_1=\gamma_1$, by Discussion~\ref{lem:structure}. The move given by the corresponding join-meet binomial replaces $x_{\gamma_0}x_{\beta'_y}$ by $x_{\beta_y}x_{\gamma_1}$ in $\xx^\vv$. The resulting monomial is divisible by $x_{\gamma_1}$, with $\gamma_1\in F$, and it is not divisible by $x_p$, because $\gamma_1\neq p$, as $n\geq2$, and $\beta_y\neq p$. By Lemma~\ref{lem:transport}, $f$ is good.
		\end{itemize}

		In all cases $f$ is good. This completes the inductive step, and the proof of Proposition~\ref{prop:main}.
	\end{proof}
	
	\begin{Theorem}\label{thm:admissible}
		Let $L$ be a thin distributive lattice of rank $m+1$, and let $A\subseteq[m]$ be admissible. Then $J_A=I_{\Lambda_A}$. In particular, $J_A$ is a prime ideal and $\dim(S_L/J_A)=m+2-|A|$.
	\end{Theorem}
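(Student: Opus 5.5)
The plan is to deduce the equality $J_A=I_{\Lambda_A}$ from Proposition~\ref{prop:main}. First recall the inclusion $J_A\subseteq I_{\Lambda_A}$ from Proposition~\ref{prop:saturation-JA}. For the reverse inclusion, recall that $I_{\Lambda_A}$ is generated by binomials. Since it is homogeneous, it is generated by finitely many homogeneous binomials $\xx^{\uu}-\xx^{\vv}$ with $\uu-\vv\in\Lambda_A$. So it suffices to show that every binomial $f\in I_{\Lambda_A}$ lies in $J_A$.

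We argue by induction on $d=\deg f$.
\begin{enumerate}[(a)]
\item If $d=1$, Proposition~\ref{prop:main}(i) gives $f\in(\ell_i:i\in A)\subseteq J_A$.
\item If $d=2$, Proposition~\ref{prop:main}(i) gives $f\in J_A$ directly.
\item If $d\geq3$, Proposition~\ref{prop:main}(ii) shows that $f$ is redundant. Hence $f$ lies in the ideal generated by binomials of $I_{\Lambda_A}$ of degree less than $d$, and these lie in $J_A$ by induction. Thus $f\in J_A$.
\end{enumerate}
This proves $I_{\Lambda_A}\subseteq J_A$, hence $J_A=I_{\Lambda_A}$. Primality then follows from Theorem~\ref{thm:prime-iff-lattice-ideal}, or directly from the saturation of $\Lambda_A$ given by Lemma~\ref{lem:integral-basis-LA}.

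For the dimension, use the standard fact that the lattice ideal of a saturated lattice $\Lambda\subseteq\ZZ^L$ has height $\rank_\ZZ\Lambda$. Indeed, $S_L/I_\Lambda$ is isomorphic to a toric ring, namely the semigroup ring of the image of $\NN^L$ in $\ZZ^L/\Lambda$. This quotient is free of rank $|L|-\rank\Lambda$, and the image of $\NN^L$ generates it as a group, so the Krull dimension is $|L|-\rank_\ZZ\Lambda$. By Lemma~\ref{lem:integral-basis-LA}, $\rank_\ZZ\Lambda_A=m+|A|$, and $|L|=2m+2$. Therefore
\[
\dim S_L/J_A=(2m+2)-(m+|A|)=m+2-|A|.
\]

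None of these steps is a real obstacle, since the combinatorial core is entirely contained in Proposition~\ref{prop:main}. The only point needing care is the dimension formula for lattice ideals, which one can cite (e.g.\ from \cite{HHO}). Alternatively, one can derive it via the Laurent-polynomial localization $S_L[X_L^{-1}]/I_{\Lambda_A}\cong K[\ZZ^L/\Lambda_A]$: this is a Laurent ring in $m+2-|A|$ variables, and the localization does not change the dimension because $I_{\Lambda_A}$ is prime and contains no monomial.
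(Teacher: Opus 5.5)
Your proof is correct and follows the paper's argument essentially verbatim. You use the inclusion $J_A\subseteq I_{\Lambda_A}$ from Proposition~\ref{prop:saturation-JA}, induction on the degree via both parts of Proposition~\ref{prop:main}, and primality and dimension from Lemma~\ref{lem:integral-basis-LA}; the only difference is that you justify the formula $\dim S_L/I_\Lambda=|L|-\rank_\ZZ\Lambda$ for saturated $\Lambda$ yourself, where the paper cites it.
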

	
	\begin{proof}
		We have $J_A\subseteq I_{\Lambda_A}$ by Proposition~\ref{prop:saturation-JA}. We show by induction on $d\geq1$ that every binomial of $I_{\Lambda_A}$ of degree $d$ belongs to $J_A$. For $d\leq2$ this is Proposition~\ref{prop:main}(i). If $d\geq3$, then such a binomial is redundant by Proposition~\ref{prop:main}(ii), so it belongs to the ideal generated by the binomials of $I_{\Lambda_A}$ of degree less than $d$, which are contained in $J_A$ by induction. Since the lattice ideal $I_{\Lambda_A}$ is generated by binomials, we obtain $I_{\Lambda_A}\subseteq J_A$, and hence $J_A=I_{\Lambda_A}$. Since $\Lambda_A$ is saturated by Lemma~\ref{lem:integral-basis-LA}, $I_{\Lambda_A}$ is a prime ideal, and $\dim(S_L/J_A)=|L|-\rank_\ZZ\Lambda_A=(2m+2)-(m+|A|)$ by Lemma~\ref{lem:integral-basis-LA}; see \cite{HHO} for the dimension formula for lattice ideals.
	\end{proof}
	
	\begin{proof}[Proof of Theorem~\ref{Thm for primality}]
		If $J_A$ is prime, then $A$ is admissible by Propositions~\ref{prop:turning-relation-nonprime} and~\ref{prop:neighboring-relations-nonprime}. Conversely, if $A$ is admissible, then $J_A$ is prime by Theorem~\ref{thm:admissible}.
	\end{proof}

    	We conclude this section with some observations on $S_L/J_A$ for an arbitrary $A\subseteq[m]$, not necessarily admissible, which follow from well-known facts on Hibi rings.
	
	\begin{Remark}\label{rem:homological}\rm
		Let $A\subseteq[m]$. By \cite{Hibi}, $K[L]$ is a Cohen--Macaulay algebra with straightening laws on $L$. For every $ j \in [m]$, $\ell_j=x_{c_j}-x_{d_j}$ is the sum of the variables indexed by the elements of $L$ of rank $j$, while $x_{0_L}$ and $x_{1_L}$ correspond to the ranks $0$ and $m+1$. Hence, by \cite[Theorem~4.1]{Eis}, the residue classes of $x_{0_L},\ell_1,\ldots,\ell_m,x_{1_L}$ form a $K[L]$-regular sequence, and so do the linear forms $\ell_i$, with $i\in A$, since every permutation of a homogeneous regular sequence is a regular sequence \cite[Exercise~1.5.23]{BrunsHerzog}. It follows that:
		\begin{enumerate}
		  \item $S_L/J_A=K[L]/(\ell_i:i\in A)$ is Cohen--Macaulay by \cite[Theorem~2.1.3(a)]{BrunsHerzog};
		  \item the reduced Hlbert-Poincar\'{e} series of $S_L/J_A$ is
		  		$$\frac{W_L(t)}{(1-t)^{m+2-|A|}},$$
		  	where $W_L(t)$ is the so-called \emph{descent polynomial} of $P_L$ (see \cite[Chapter~3]{Stanley}), which represents the $h$-polynomial of $K[L]$;
		  	\item the regularity of $S_L/J_A$ is $m-\rank(P_L)$, which is the regularity of $K[L]$ (see  \cite[Theorem~1.1]{EHS}).
		\end{enumerate}
	\end{Remark}
	
	Since $J_A$ contains the linear forms $\ell_i$, $i\in A$, it is natural to study the resolution of $S_L/J_A$ after factoring them out. Let $T$ be the polynomial ring over $K$ in the variables $x_w$ with $w\in L\setminus\{d_i:i\in A\}$, and let $\sigma_A:S_L\to T$ be the surjective homomorphism of $K$-algebras defined by $\sigma_A(x_{d_i})=x_{c_i}$ for $i\in A$ and $\sigma_A(x_w)=x_w$ otherwise. Its kernel is $(\ell_i:i\in A)$, so that, setting $\overline J_A=\sigma_A(I_L)$, we have $S_L/J_A\cong T/\overline J_A$ as graded $K$-algebras.
	
	\begin{Corollary}\label{cor:linear-resolution}
		Let $L$ be a thin distributive lattice of rank $m+1$, and let $A\subseteq[m]$. Then the following conditions are equivalent:
		\begin{enumerate}[(i)]
			\item $\overline J_A$ has a linear resolution;
			\item $\rank(P_L)=m-1$;
			\item $L\cong\Chain{m+1}\times\Chain2$;
			\item $\cT(L)=\emptyset$.
		\end{enumerate}
		Consequently, $J_A$ is a prime ideal, $S_L/J_A$ is Cohen--Macaulay and $\overline J_A$ has a linear resolution if and only if $L\cong\Chain{m+1}\times\Chain2$. Moreover, in this case these three properties hold for every $A\subseteq[m]$.
	\end{Corollary}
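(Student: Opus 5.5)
We first pass from $S_L/J_A$ to $T/\overline J_A$. By Remark~\ref{rem:homological}, the linear forms $\ell_i$, $i\in A$, form a regular sequence on $K[L]$. Killing a regular sequence of linear forms preserves the graded Betti numbers over the smaller ring. Hence the graded Betti numbers of $T/\overline J_A$ over $T$ coincide with those of $K[L]=S_L/I_L$ over $S_L$. So (i) is equivalent to $I_L$ having a linear resolution, and in particular the condition is independent of $A$.

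Next, since $S_L/J_A$ is Cohen--Macaulay with $\reg=m-\rank(P_L)$ (Remark~\ref{rem:homological}(3)), we use the following fact. A homogeneous ideal generated in degree $2$, with quotient of regularity $\reg$, has a linear resolution if and only if $\reg(T/\overline J_A)=1$. The ideal $\overline J_A$ is nonzero and generated in degree two, since $L$ has at least one diamond. This gives (i)$\Leftrightarrow$ $m-\rank(P_L)=1$, that is, (ii).

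For (ii)$\Leftrightarrow$(iii)$\Leftrightarrow$(iv), we work with $P_L$, which has $m+1$ elements by Remark~\ref{lem:basic}(a). Rank $m-1$ means that $P_L$ contains a chain of $m$ elements and is not itself a chain; a chain $P_L$ would give $L$ a chain, which is not simple. Then $P_L$ is a chain of $m$ elements plus one further element $z$. If $z$ is incomparable to the entire chain, then $L\cong\Chain{m+1}\times\Chain2$. Otherwise $P_L$ is an ordinal sum of pieces, and this produces an apex or a turning. Rather than classify all such $P_L$ directly, it is cleaner to use the staircase description of Discussion~\ref{lem:structure}. If $\cT(L)=\emptyset$, then $L=[a_1,b_m]$ is a single ladder $\Chain{m+1}\times\Chain2$, so (iv)$\Rightarrow$(iii). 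For (iii)$\Rightarrow$(ii), note that $P_L$ is then a chain of length $m-1$ plus an isolated point, so $\rank(P_L)=m-1$. For (ii)$\Rightarrow$(iv), we argue by contrapositive: if $k\in\cT(L)$, then $d=b_{k-1}=a_{k+1}$ is an apex of rank $k$. Hence every element of $P_L$ is either below $d$ (there are $k$ of these) or not below $d$, and the latter are all above every element below $d$. So $P_L$ is an ordinal sum $P_1\oplus P_2$ with $|P_1|=k$, $|P_2|=m+1-k$. Since $D_{k-1}$ and $D_{k+1}$ are diamonds, neither $P_1$ nor $P_2$ is a chain, so $\rank P_L=\rank P_1+\rank P_2+1\le (k-2)+(m-k-1)+1=m-2$.

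For the final assertion, combine Theorem~\ref{Thm for primality}, Remark~\ref{rem:homological}(1) and the equivalences. When $L\cong\Chain{m+1}\times\Chain2$ we have $\cT(L)=\emptyset$, so every $A$ is admissible. Conversely, the linear resolution alone forces $\cT(L)=\emptyset$. The main obstacle is the Betti-number transfer in the first step: one must make sure that the regular-sequence reduction and the regularity characterization of linear resolutions (Cohen--Macaulay, generated in degree two, $\reg=1$) are correctly invoked for $T/\overline J_A$.
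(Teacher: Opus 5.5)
Your proposal is correct. The combinatorial part follows a genuinely different route from the paper, while the homological part is essentially the paper's.

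\textbf{Homological part, (i)$\Leftrightarrow$(ii).} Like the paper, you use that $\overline J_A$ is generated by quadrics, so it has a linear resolution if and only if $\reg(T/\overline J_A)=1$, together with $\reg=m-\rank(P_L)$. Your transfer of graded Betti numbers along the regular sequence $\ell_i$, $i\in A$, makes explicit the reasoning behind Remark~\ref{rem:homological}. It also shows that $\overline J_A$ and $I_L$ have the same Betti table, so (i) is visibly independent of $A$.

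\textbf{Combinatorial equivalences.} The paper proves (ii)$\Rightarrow$(iii) directly on $P_L$. It writes $P_L=C\sqcup\{z\}$ with $C=\{p_1<\cdots<p_m\}$ and observes that any comparability between $z$ and $C$ would make $\{p_1\}$ or $P_L\setminus\{p_m\}$ an apex of $L$, contradicting simplicity. It then reads (iii)$\Leftrightarrow$(iv) off the staircase description.

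You instead close the cycle (iv)$\Rightarrow$(iii)$\Rightarrow$(ii)$\Rightarrow$(iv), proving (ii)$\Rightarrow$(iv) by contraposition. At a turning rank $k$, the apex $b_{k-1}=a_{k+1}$ splits $P_L$ as an ordinal sum $P_1\oplus P_2$. Neither summand is a chain, because $[0_L,b_{k-1}]\cong\cJ(P_1)$ contains $D_{k-1}$ and $[a_{k+1},1_L]\cong\cJ(P_2)$ contains $D_{k+1}$. This forces $\rank(P_L)\leq m-2$.

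\textbf{Trade-off.} The paper's argument uses only simplicity and $|P_L|=m+1$. It therefore shows, independently of thinness, that every simple distributive lattice with $\rank(P_L)=|P_L|-2$ is $\Chain{m+1}\times\Chain2$. Your argument relies on the thin structure, but it shows directly why a turning rank pushes the regularity up to at least $2$.

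The sketch of a direct (ii)$\Rightarrow$(iii) in your third paragraph, with its vague ``this produces an apex or a turning,'' is superseded by the cycle and can be dropped. The appeal to Cohen--Macaulayness in the regularity step is likewise not needed.
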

	
	\begin{proof}
		Since $L$ is not a chain and $|P_L|=m+1$, we have $\rank(P_L)\leq m-1$, and so $\reg(T/\overline J_A)=\reg(S_L/J_A)=m-\rank(P_L)\geq1$ by Remark~\ref{rem:homological}. Since $\overline J_A$ is generated by quadrics, it has a linear resolution if and only if $\reg(T/\overline J_A)=1$, and this proves (i)$\Leftrightarrow$(ii). If $\rank(P_L)=m-1$, then $P_L=C\sqcup\{z\}$ for a chain $C=\{p_1<\cdots<p_m\}$, and $z$ is incomparable with every element of $C$: otherwise $\{p_1\}$ or $P_L\setminus\{p_m\}$ would be a poset ideal comparable with every poset ideal of $P_L$, that is, an apex of $L$ different from $0_L$ and $1_L$. Hence $L\cong\cJ(C)\times\cJ(\{z\})\cong\Chain{m+1}\times\Chain2$, and the converse is clear; this proves (ii)$\Leftrightarrow$(iii). By Discussion~\ref{lem:structure}, $\cT(L)=\emptyset$ if and only if $L$ is a single ladder, which gives (iii)$\Leftrightarrow$(iv). Finally, if $\cT(L)=\emptyset$, then every $A\subseteq[m]$ is admissible, and the last assertions follow from Theorem~\ref{Thm for primality} and Remark~\ref{rem:homological}.
	\end{proof}

	\section{A toric parametrization}\label{sec:toric}
	
	In this section we show that, when $A$ is admissible, $S_L/J_A$ is a toric ring, which is obtained by a simple modification of the parametrization of $K[L]$ given by Hibi \cite{Hibi}.
	
	Let $L$ be a thin distributive lattice of rank $m+1$. Consider the polynomial ring $\mathscr R_L=K[z,y_p:p\in P_L]$ and the homomorphism of $K$-algebras
	\[
	\varphi:S_L\longrightarrow\mathscr R_L,\qquad\varphi(x_\alpha)=z\prod_{p\in\ideal\alpha}y_p.
	\]
	By \cite{Hibi}, $\ker\varphi=I_L$, so that $K[L]$ is isomorphic to the affine semigroup ring $K[z\prod_{p\in\ideal\alpha}y_p:\alpha\in L]$. The homomorphism induced by $\varphi$ on the exponent groups is
	\[
	\betab:\ZZ^L\longrightarrow\Gamma_L=\ZZ\ee_z\oplus\bigoplus_{p\in P_L}\ZZ\ee_p,\qquad\betab(\ee_\alpha)=\ee_z+\sum_{p\in\ideal\alpha}\ee_p.
	\]
	Recall that, if $\phi$ is a homomorphism of $K$-algebras from $S_L$ to a polynomial ring which maps every variable to a monomial, then $\ker\phi$ is the lattice ideal of the kernel of the induced homomorphism of exponent groups; see, for instance, \cite[Chapter~3]{HHO}. In particular, $I_L=\ker\varphi=I_{\ker_\ZZ\betab}$.
	
	For every $i\in[m]$ we have $\ideal{a_i}=\ideal{c_i}\cap\ideal{d_i}$ and $|\ideal{c_i}|=|\ideal{d_i}|=|\ideal{a_i}|+1$ by Remark~\ref{lem:basic}(a) and Discussion~\ref{lem:structure}. Hence there exist unique elements $p_i,p'_i\in P_L$ such that
	\[
	\ideal{c_i}=\ideal{a_i}\sqcup\{p_i\},\qquad\ideal{d_i}=\ideal{a_i}\sqcup\{p'_i\},
	\]
	and $p_i\neq p'_i$, since $c_i\neq d_i$. In other words, $p_i$ is the only join-irreducible element of $L$ which is smaller than or equal to $c_i$ but not to $d_i$, and similarly for $p'_i$. In particular,
	\[
	\betab(\rhob_i)=\betab(\ee_{c_i})-\betab(\ee_{d_i})=\ee_{p_i}-\ee_{p'_i}.
	\]
	For $A\subseteq[m]$, let $\sim_A$ be the equivalence relation on $P_L$ generated by $p_i\sim_Ap'_i$ for $i\in A$, and denote by $[p]$ the equivalence class of $p\in P_L$. Let $\mathscr R_{L,A}=K[z,y_{[p]}:[p]\in P_L/{\sim_A}]$ and consider the homomorphism of $K$-algebras
	\[
	\varphi_A:S_L\longrightarrow\mathscr R_{L,A},\qquad\varphi_A(x_\alpha)=z\prod_{p\in\ideal\alpha}y_{[p]}.
	\]
	Thus $\varphi_A$ is obtained from $\varphi$ by identifying the variables $y_{p_i}$ and $y_{p'_i}$ for every $i\in A$, and $\varphi_\emptyset=\varphi$. We denote by $T_A=K[\varphi_A(x_\alpha):\alpha\in L]\subseteq\mathscr R_{L,A}$ its image, which is an affine semigroup ring.
	
	\begin{Lemma}\label{lem:kernel-beta}
		The homomorphism $\betab$ is surjective, and $\ker_\ZZ\betab=\Lambda_\emptyset$.
	\end{Lemma}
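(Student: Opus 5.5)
We prove the two assertions of Lemma~\ref{lem:kernel-beta} separately. The approach is to combine a rank count with the known identification $I_L=I_{\ker_\ZZ\betab}$, rather than to compute the kernel directly.

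\emph{Surjectivity.} We exhibit preimages of a basis of $\Gamma_L$. The element $\ee_z$ is $\betab(\ee_{0_L})$, because $\ideal{0_L}=\emptyset$. For $p\in P_L$, the element $p$ is join-irreducible, so it covers a unique element $p_*\in L$. Birkhoff's theorem gives $\ideal p=\ideal{p_*}\sqcup\{p\}$, and hence $\betab(\ee_p-\ee_{p_*})=\ee_p$. Therefore the image of $\betab$ contains a $\ZZ$-basis of $\Gamma_L$, and $\betab$ is surjective. Since $\Gamma_L$ has rank $|P_L|+1=m+2$, it follows that $\ker_\ZZ\betab$ has rank $(2m+2)-(m+2)=m$.

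\emph{Inclusion $\Lambda_\emptyset\subseteq\ker_\ZZ\betab$.} For each $i\in[m]$ we have $\ideal{a_i}=\ideal{c_i}\cap\ideal{d_i}$ and $\ideal{b_i}=\ideal{c_i}\cup\ideal{d_i}$. As multisets this gives $\ideal{a_i}+\ideal{b_i}=\ideal{c_i}+\ideal{d_i}$. The $\ee_z$-coefficients also agree, since both sides of $\etab_i$ contribute two copies of $\ee_z$. Hence $\betab(\etab_i)=0$. Alternatively, this follows from $x_{c_i}x_{d_i}-x_{a_i}x_{b_i}\in I_L=\ker\varphi$.

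\emph{Equality.} We show that $\ker_\ZZ\betab$ is contained in the saturation of $\Lambda_\emptyset$. By Lemma~\ref{lem:integral-basis-LA}, $\Lambda_\emptyset$ is saturated of rank $m$. Since $\ker_\ZZ\betab$ also has rank $m$, the quotient $\ker_\ZZ\betab/\Lambda_\emptyset$ is torsion, and every $\zz\in\ker_\ZZ\betab$ has a multiple $N\zz\in\Lambda_\emptyset$ for some $N\geq1$. Saturation of $\Lambda_\emptyset$ then forces $\zz\in\Lambda_\emptyset$, so $\ker_\ZZ\betab=\Lambda_\emptyset$.

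The only genuinely lattice-theoretic step is the surjectivity, specifically the existence of the unique lower cover $p_*$ with $\ideal p=\ideal{p_*}\sqcup\{p\}$; this is standard from Birkhoff's theorem. If a more direct argument is preferred, one could instead use $I_L=I_{\ker_\ZZ\betab}$ together with Proposition~\ref{prop:saturation-JA}. That route gives $I_{\Lambda_\emptyset}\supseteq I_L=I_{\ker\betab}$, which yields $\ker_\ZZ\betab\subseteq\Lambda_\emptyset$ by comparing exponent vectors of binomials. However, the rank argument above already suffices.
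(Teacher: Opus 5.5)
Your proof is correct. It follows the paper for surjectivity but takes a genuinely different route for the kernel.

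\emph{Surjectivity.} This is the paper's argument: its poset ideals $\{p'\in P_L:p'\leq p\}$ and its complement of $p$ are exactly $\ideal p$ and $\ideal{p_*}$ for your lower cover $p_*$.

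\emph{Kernel.} The paper argues ring-theoretically. Hibi's theorem gives that $I_L$ is prime, so Theorem~\ref{thm:prime-iff-lattice-ideal} yields $I_L=I_{\Lambda_\emptyset}$. Combined with $I_L=\ker\varphi=I_{\ker_\ZZ\betab}$, and the fact that a subgroup of $\ZZ^L$ is recovered from its lattice ideal, this gives $\ker_\ZZ\betab=\Lambda_\emptyset$. Surjectivity plays no role there.

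You instead check $\Lambda_\emptyset\subseteq\ker_\ZZ\betab$ directly, via $\ideal{a_i}=\ideal{c_i}\cap\ideal{d_i}$ and $\ideal{b_i}=\ideal{c_i}\cup\ideal{d_i}$. You then use surjectivity to get $\rank_\ZZ\ker_\ZZ\betab=m=\rank_\ZZ\Lambda_\emptyset$, and close with the saturation of $\Lambda_\emptyset$ from Lemma~\ref{lem:integral-basis-LA}.

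This is pure linear algebra over the combinatorial basis of Lemma~\ref{lem:integral-basis-LA}. It uses neither Hibi's theorems nor the Ho\c{s}ten--Shapiro input behind Proposition~\ref{prop:saturation-JA}. As a result, $\ker\varphi=I_{\ker_\ZZ\betab}=I_{\Lambda_\emptyset}$ holds independently of Hibi. Theorem~\ref{thm:admissible} with $A=\emptyset$ then reproves $\ker\varphi=I_L$ for thin lattices rather than assuming it.

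The paper's route is shorter, given the machinery already in place, and needs no rank count.

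Your closing alternative is also valid. It still needs Hibi's $\ker\varphi=I_L$. However, it uses only the inclusion $J_\emptyset\subseteq I_{\Lambda_\emptyset}$ from Proposition~\ref{prop:saturation-JA} rather than primality, because the reverse inclusion of lattices is your direct computation.
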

	
	\begin{proof}
		Since $\ideal{0_L}=\emptyset$, we have $\betab(\ee_{0_L})=\ee_z$. Let $p\in P_L$. The sets $U=\{p'\in P_L:p'\leq p\}$ and $U\setminus\{p\}$ are poset ideals of $P_L$, so by Birkhoff's theorem there exist $\alpha,\alpha'\in L$ with $\ideal\alpha=U$ and $\ideal{\alpha'}=U\setminus\{p\}$, and $\betab(\ee_\alpha)-\betab(\ee_{\alpha'})=\ee_p$. Hence $\betab$ is surjective.
		
		By \cite{Hibi}, $I_L=J_\emptyset$ is a prime ideal, so Theorem~\ref{thm:prime-iff-lattice-ideal} gives $I_L=I_{\Lambda_\emptyset}$. Therefore $I_{\ker_\ZZ\betab}=I_{\Lambda_\emptyset}$. Since a lattice is determined by its lattice ideal, because $\zz\in\Lambda$ if and only if $\xx^{\zz^+}-\xx^{\zz^-}\in I_\Lambda$ for every subgroup $\Lambda\subseteq\ZZ^L$, we conclude that $\ker_\ZZ\betab=\Lambda_\emptyset$.
	\end{proof}
	
	\begin{Proposition}\label{prop:toric-kernel}
		Let $L$ be a thin distributive lattice of rank $m+1$, and let $A\subseteq[m]$. Then
		\[
		J_A\subseteq\ker\varphi_A=I_{\Lambda_A}.
		\]
		In particular, $T_A\cong S_L/I_{\Lambda_A}$ and $\dim T_A=m+2-|A|$.
	\end{Proposition}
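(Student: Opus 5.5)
Since $\varphi_A$ sends every variable to a monomial, $\ker\varphi_A$ is the lattice ideal of the kernel of the induced map on exponent groups, by the fact recalled from \cite[Chapter~3]{HHO}. We therefore identify that map and compute its kernel. Write $\pi_A:\Gamma_L\to\Gamma_{L,A}=\ZZ\ee_z\oplus\bigoplus_{[p]}\ZZ\ee_{[p]}$ for the surjection $\ee_z\mapsto\ee_z$, $\ee_p\mapsto\ee_{[p]}$. The homomorphism induced by $\varphi_A$ is then $\betab_A=\pi_A\circ\betab$, and $\ker\varphi_A=I_{\ker_\ZZ\betab_A}$. The heart of the proof is the equality $\ker_\ZZ\betab_A=\Lambda_A$.

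The key observation is that $\ker\pi_A$ is generated by the vectors $\ee_{p_i}-\ee_{p'_i}$ for $i\in A$. To see this, note that $\Gamma_{L,A}$ is the quotient of $\Gamma_L$ obtained by identifying basis vectors along the equivalence relation generated by these pairs. Choosing, in each class, a spanning tree of the graph on $P_L$ with edges $\{p_i,p'_i\}$, $i\in A$, shows that the differences $\ee_p-\ee_{p'}$ for $p\sim_Ap'$ are integer combinations of the edge vectors $\ee_{p_i}-\ee_{p'_i}$. Moreover, these differences span exactly the kernel of the map collapsing each class, since an element of $\ker\pi_A$ has coordinate sum zero on each class.

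We then prove the two inclusions. For ``$\supseteq$'': Lemma~\ref{lem:kernel-beta} gives $\betab(\etab_i)=0$, and $\betab(\rhob_i)=\ee_{p_i}-\ee_{p'_i}\in\ker\pi_A$ for $i\in A$, so $\Lambda_A\subseteq\ker\betab_A$. For ``$\subseteq$'': if $\betab_A(\zz)=0$, then $\betab(\zz)\in\ker\pi_A$, so $\betab(\zz)=\sum_{i\in A}n_i(\ee_{p_i}-\ee_{p'_i})=\betab\bigl(\sum_{i\in A}n_i\rhob_i\bigr)$. Hence $\zz-\sum_{i\in A} n_i\rhob_i\in\ker\betab=\Lambda_\emptyset\subseteq\Lambda_A$ by Lemma~\ref{lem:kernel-beta}, and so $\zz\in\Lambda_A$. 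This yields $\ker\varphi_A=I_{\Lambda_A}$.

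The inclusion $J_A\subseteq I_{\Lambda_A}$ is Proposition~\ref{prop:saturation-JA}. The isomorphism $T_A\cong S_L/\ker\varphi_A=S_L/I_{\Lambda_A}$ is immediate, since $T_A$ is the image of $\varphi_A$. Finally, $\dim T_A=|L|-\rank_\ZZ\Lambda_A=(2m+2)-(m+|A|)=m+2-|A|$ by Lemma~\ref{lem:integral-basis-LA} and the dimension formula for lattice ideals used in Theorem~\ref{thm:admissible}. The only step requiring care is the generation of $\ker\pi_A$ by the edge vectors; it is elementary, but I would write it out explicitly via the spanning-tree argument.
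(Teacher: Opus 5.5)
Your proposal is correct and follows essentially the same route as the paper. Both arguments identify the exponent map as $\pi_A\circ\betab$ and show that $\ker\pi_A$ is generated by the vectors $\ee_{p_i}-\ee_{p'_i}$, $i\in A$ (the paper uses paths in the relation $\sim_A$ where you use spanning trees), then pull back through $\ker\betab=\Lambda_\emptyset$ from Lemma~\ref{lem:kernel-beta}, with the remaining assertions coming from Proposition~\ref{prop:saturation-JA} and Lemma~\ref{lem:integral-basis-LA} exactly as you describe.
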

	
	\begin{proof}
		Let $\pi_A:\Gamma_L\to\Gamma_{L,A}=\ZZ\ee_z\oplus\bigoplus_{[p]\in P_L/{\sim_A}}\ZZ\ee_{[p]}$ be the surjective homomorphism defined by $\pi_A(\ee_z)=\ee_z$ and $\pi_A(\ee_p)=\ee_{[p]}$. Then $\varphi_A$ maps every variable to a monomial, and the induced homomorphism of exponent groups is $\pi_A\circ\betab$. Hence $\ker\varphi_A=I_{\ker_\ZZ(\pi_A\circ\betab)}$, and it suffices to show that $\ker_\ZZ(\pi_A\circ\betab)=\Lambda_A$.
		
		We claim that $\ker_\ZZ\pi_A$ is generated by the elements $\ee_{p_i}-\ee_{p'_i}$, $i\in A$. These elements belong to $\ker_\ZZ\pi_A$, since $[p_i]=[p'_i]$. Conversely, let $\zz=\lambda\ee_z+\sum_{p\in P_L}\lambda_p\ee_p\in\ker_\ZZ\pi_A$. Then $\lambda=0$ and $\sum_{p\in\mathcal C}\lambda_p=0$ for every equivalence class $\mathcal C$ of $\sim_A$, so $\zz$ is an integral linear combination of elements $\ee_p-\ee_{p''}$ with $p\sim_Ap''$. Finally, if $p\sim_Ap''$, then there exists a sequence $p=q_0,q_1,\ldots,q_l=p''$ such that $\{q_{j-1},q_j\}=\{p_i,p'_i\}$ for some $i\in A$, for every $j\in[l]$, and so $\ee_p-\ee_{p''}=\sum_{j=1}^l(\ee_{q_{j-1}}-\ee_{q_j})$ is an integral linear combination of the elements $\ee_{p_i}-\ee_{p'_i}$, $i\in A$. This proves the claim.
		
		Let $\mathrm P_A$ be the subgroup of $\ZZ^L$ generated by $\rhob_i$, $i\in A$, so that $\Lambda_A=\Lambda_\emptyset+\mathrm P_A$. Since $\betab(\rhob_i)=\ee_{p_i}-\ee_{p'_i}$, the claim says that $\ker_\ZZ\pi_A=\betab(\mathrm P_A)$. Therefore, using Lemma~\ref{lem:kernel-beta},
		\[
		\ker_\ZZ(\pi_A\circ\betab)=\betab^{-1}\bigl(\betab(\mathrm P_A)\bigr)=\mathrm P_A+\ker_\ZZ\betab=\mathrm P_A+\Lambda_\emptyset=\Lambda_A.
		\]
		Here, the second equality holds because $\betab(\zz)\in\betab(\mathrm P_A)$ if and only if $\zz-\mathbf r\in\ker_\ZZ\betab$ for some $\mathbf r\in\mathrm P_A$. Hence $\ker\varphi_A=I_{\Lambda_A}$, and $J_A\subseteq I_{\Lambda_A}$ by Proposition~\ref{prop:saturation-JA}. Finally, $T_A\cong S_L/\ker\varphi_A=S_L/I_{\Lambda_A}$, and Lemma~\ref{lem:integral-basis-LA} gives $\dim T_A=|L|-\rank_\ZZ\Lambda_A=(2m+2)-(m+|A|)=m+2-|A|$.
	\end{proof}
	
	\begin{Corollary}\label{cor:toric}
		Let $L$ be a thin distributive lattice of rank $m+1$, and let $A\subseteq[m]$. Then $\ker\varphi_A=J_A$ if and only if $A$ is admissible. In this case $S_L/J_A\cong T_A$; that is, $S_L/J_A$ is isomorphic to the affine semigroup ring obtained from Hibi's parametrization of $K[L]$ by identifying the variables $y_{p_i}$ and $y_{p'_i}$ for every $i\in A$.
	\end{Corollary}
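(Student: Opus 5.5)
The plan is to combine Proposition~\ref{prop:toric-kernel} with the chain of equivalences in Theorem~\ref{thm:prime-iff-lattice-ideal} and the main result, Theorem~\ref{Thm for primality}. By Proposition~\ref{prop:toric-kernel} we already know that $\ker\varphi_A=I_{\Lambda_A}$ for every $A\subseteq[m]$, so the condition $\ker\varphi_A=J_A$ is literally the condition $J_A=I_{\Lambda_A}$, which is statement (ii) of Theorem~\ref{thm:prime-iff-lattice-ideal}.

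First, I would establish the equivalence. If $A$ is admissible, Theorem~\ref{thm:admissible} gives $J_A=I_{\Lambda_A}=\ker\varphi_A$. Conversely, if $\ker\varphi_A=J_A$, then $J_A=I_{\Lambda_A}$. This ideal is prime because $\Lambda_A$ is saturated by Lemma~\ref{lem:integral-basis-LA}. Equivalently, one can argue directly that $\ker\varphi_A$ is the kernel of a map into a polynomial ring, hence prime. Theorem~\ref{Thm for primality}, through Propositions~\ref{prop:turning-relation-nonprime} and~\ref{prop:neighboring-relations-nonprime}, then forces $A$ to be admissible.

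Second, for the isomorphism, when $A$ is admissible I would write
\[
S_L/J_A=S_L/\ker\varphi_A\cong T_A,
\]
where the isomorphism is induced by $\varphi_A$ onto its image. By construction, $T_A$ is the semigroup ring generated by the monomials $z\prod_{p\in\ideal\alpha}y_{[p]}$, that is, Hibi's parametrization of $K[L]$ with $y_{p_i}$ and $y_{p'_i}$ identified for each $i\in A$. This is exactly how $\varphi_A$ was defined from $\varphi$.

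There is no real obstacle here, since the statement is a bookkeeping consequence of results already proved. The only point needing care is to note explicitly that primality of $\ker\varphi_A$ does not depend on admissibility. The content of the converse is therefore carried entirely by the necessity propositions, which show that $J_A$ itself fails to be prime, and hence differs from the prime ideal $\ker\varphi_A$, when $A$ is not admissible.
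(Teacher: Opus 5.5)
Your proposal is correct and follows the paper's proof. If $A$ is admissible, Theorem~\ref{thm:admissible} and Proposition~\ref{prop:toric-kernel} give $J_A=I_{\Lambda_A}=\ker\varphi_A$; conversely, $\ker\varphi_A=J_A$ forces $J_A$ to be prime (the paper phrases this as $T_A$ being a domain, your alternative via saturation of $\Lambda_A$ works equally well), so Theorem~\ref{Thm for primality} gives admissibility.
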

	
	\begin{proof}
		If $A$ is admissible, then $J_A=I_{\Lambda_A}=\ker\varphi_A$ by Theorem~\ref{thm:admissible} and Proposition~\ref{prop:toric-kernel}. Conversely, if $\ker\varphi_A=J_A$, then $J_A$ is a prime ideal, since $T_A$ is a domain, and so $A$ is admissible by Theorem~\ref{Thm for primality}.
	\end{proof}
	
	\begin{Remark}\label{rem:CBN}
		The parametrization $\varphi$ has a natural probabilistic meaning. Let $K=\mathbb R$ and regard the elements of $P_L$ as events which can only occur in an order compatible with $P_L$, that is, an event $p$ can occur only after all the events $p'<p$ have occurred. This is the \emph{conjunctive Bayesian network} on $P_L$ introduced in \cite{BES}. The possible states are the sets of events which have occurred, namely the poset ideals $\ideal\alpha$, $\alpha\in L$, and the parameters are numbers $\theta_p\in(0,1)$, $p\in P_L$, where $\theta_p$ is the probability that $p$ occurs given that all the events $p'<p$ have occurred. For $\alpha\in L$, let $q_\alpha$ be the probability that all the events of $\ideal\alpha$ have occurred. Then $q_\alpha=\prod_{p\in\ideal\alpha}\theta_p$, see the proof of \cite[Theorem~10]{BES}, that is, $q_\alpha$ is the value of $\varphi(x_\alpha)$ at $z=1$ and $y_p=\theta_p$. Beerenwinkel, Eriksson and Sturmfels observed that, in these coordinates, the model is the toric variety of the Hibi ring of $L$: the relations among the probabilities $q_\alpha$ are generated by $q_{0_L}=1$ and by the join-meet relations $q_\alpha q_\beta=q_{\alpha\wedge\beta}q_{\alpha\vee\beta}$; see \cite[Corollary~11]{BES}. This is the case $A=\emptyset$ of the discussion below.
		
		Our results describe what happens when some conditional probabilities are required to coincide. Since $\ideal{c_i}=\ideal{a_i}\sqcup\{p_i\}$ and $\ideal{d_i}=\ideal{a_i}\sqcup\{p'_i\}$, we have $q_{c_i}-q_{d_i}=\bigl(\prod_{p\in\ideal{a_i}}\theta_p\bigr)(\theta_{p_i}-\theta_{p'_i})$, and so $q_{c_i}=q_{d_i}$ if and only if $\theta_{p_i}=\theta_{p'_i}$, since the parameters are positive. Thus the rank identification $x_{c_i}=x_{d_i}$ corresponds to requiring that the two events $p_i$ and $p'_i$, which distinguish the two states of rank $i$, have the same conditional probability. Fix $A\subseteq[m]$ and impose these conditions for every $i\in A$. Then $\theta_p$ only depends on the class $[p]$ of $p$ with respect to $\sim_A$, and $q_\alpha$ is the value of $\varphi_A(x_\alpha)$ at $z=1$ and $y_{[p]}=\theta_{[p]}$. If $f\in S_L$ is homogeneous of degree $r$, then $\varphi_A(f)=z^r\cdot\varphi_A(f)|_{z=1}$; since a nonzero polynomial cannot vanish on a product of open intervals, it follows that $f$ vanishes at all the vectors $(q_\alpha)_{\alpha\in L}$ of this submodel if and only if $f\in\ker\varphi_A$. Hence Corollary~\ref{cor:toric} can be read as follows: the homogeneous relations among the probabilities of the submodel are generated by the obvious ones, namely the join-meet relations and the linear relations $q_{c_i}=q_{d_i}$, $i\in A$, if and only if $A$ is admissible. In this case, the ring $T_A\cong S_L/J_A$ is Cohen--Macaulay of dimension $m+2-|A|$, with the same $h$-polynomial as the Hibi ring of $L$, by Remark~\ref{rem:homological}. For an arbitrary $A$, the homogeneous relations of the submodel are given by the saturation $J_A:X_L^\infty$, by Propositions~\ref{prop:saturation-JA} and~\ref{prop:toric-kernel}.
		
	\end{Remark}
	
	\begin{Example}\label{ex:CBN}
		Consider four events $1,2,3,4$ such that the event $2$ can occur only after the event $1$, and the event $4$ only after the events $1$ and $3$. Thus $P$ is the poset on $\{1,2,3,4\}$ with $1<2$, $1<4$ and $3<4$, and $L=\cJ(P)$ consists of the poset ideals
		\[
		\emptyset;\quad\{1\},\ \{3\};\quad\{1,2\},\ \{1,3\};\quad\{1,2,3\},\ \{1,3,4\};\quad\{1,2,3,4\},
		\]
		listed by rank. Hence $L$ is a thin distributive lattice of rank $4$, and $\cT(L)=\{2\}$, since $\{1\}\vee\{3\}=\{1,3\}=\{1,2,3\}\wedge\{1,3,4\}$; see Figure~\ref{fig:CBN}.
		
		\begin{figure}[htbp]
			\centering
			\begin{tikzpicture}[x=1.5cm,y=1.1cm,dot/.style={circle,fill,inner sep=1.3pt},every node/.style={font=\small}]
				\fill[gray!20] (0,1) -- (1,2) -- (0,3) -- (-1,2) -- cycle;
				\coordinate (e0) at (-1,0);
				\coordinate (e3) at (-2,1); \coordinate (e1) at (0,1);
				\coordinate (e13) at (-1,2); \coordinate (e12) at (1,2);
				\coordinate (e134) at (-2,3); \coordinate (e123) at (0,3);
				\coordinate (e1234) at (-1,4);
				\foreach \s/\t in {e0/e3,e0/e1,e3/e13,e1/e13,e1/e12,e13/e134,e13/e123,e12/e123,e134/e1234,e123/e1234} \draw (\s)--(\t);
				\foreach \s in {e0,e3,e1,e13,e12,e134,e123,e1234} \node[dot] at (\s) {};
				\node[below] at (e0) {$\emptyset$};
				\node[left=2pt] at (e3) {$\{3\}$};
				\node[right=2pt] at (e1) {$\{1\}$};
				\node[left=2pt] at (e13) {$\{1,3\}$};
				\node[right=2pt] at (e12) {$\{1,2\}$};
				\node[left=2pt] at (e134) {$\{1,3,4\}$};
				\node[right=2pt] at (e123) {$\{1,2,3\}$};
				\node[above] at (e1234) {$\{1,2,3,4\}$};
			\end{tikzpicture}
			\caption{The thin distributive lattice $L=\cJ(P)$ of Example~\ref{ex:CBN}. The elementary diamond of the turning rank $2$ is shaded.}
			\label{fig:CBN}
		\end{figure}
		
		We write $x_\emptyset,x_1,x_3,x_{12},\ldots$ for the corresponding variables, and similarly $q_U=\prod_{p\in U}\theta_p$ for the probabilities; for instance, $q_{134}=\theta_1\theta_3\theta_4$. The two elements of rank $1$, $2$ and $3$ differ by the events $1$ and $3$, $2$ and $3$, and $2$ and $4$, respectively, so the rank identifications of ranks $1$, $2$ and $3$ correspond to the conditions $\theta_1=\theta_3$, $\theta_2=\theta_3$ and $\theta_2=\theta_4$. Since $\cT(L)=\{2\}$, the admissible subsets of $[3]$ are $\emptyset$, $\{1\}$ and $\{3\}$. For instance, if $A=\{1\}$, then every homogeneous relation among the probabilities follows from the join-meet relations and from $q_1=q_3$; for example, $q_\emptyset q_{13}=q_1^2$ follows from $q_1q_3=q_\emptyset q_{13}$.
		
		Now let $A=\{2\}$. If $\theta_2=\theta_3$, then $q_\emptyset q_{1234}=\theta_1\theta_3^2\theta_4=q_3q_{134}$, but the binomial $x_\emptyset x_{1234}-x_3x_{134}$ does not belong to $J_A$, by the proof of Proposition~\ref{prop:turning-relation-nonprime}. Similarly, let $A=\{1,3\}$. If $\theta_1=\theta_3$ and $\theta_2=\theta_4$, then $q_\emptyset q_{1234}=\theta_1^2\theta_2^2=q_{12}^2$, but $x_\emptyset x_{1234}-x_{12}^2\notin J_A$, by the proof of Proposition~\ref{prop:neighboring-relations-nonprime}. In both cases, the join-meet relations and the linear relations $q_{c_i}=q_{d_i}$, $i\in A$, do not generate all the homogeneous relations of the submodel.
	\end{Example}

\end{document}